\newcommand{\ds}{\displaystyle}
\newcommand{\RN}{\mathbb{R}} %real numbers
\newcommand{\CN}{\mathbb{C}} %complex numbers
\newcommand{\ZN}{\mathbb{Z}} %Zahlen
\newcommand{\eps}{\ensuremath\varepsilon}
\newcommand{\ovl}{\overline}
\newcommand{\gr}{\mathop{\mathrm{gr}}\nolimits}
\newcommand{\SL}{\mathop{\mathrm{SL}}\nolimits}
\renewcommand{\Im}{\mathop{\mathrm{Im}}\nolimits}
\newcommand{\SU}{\mathop{\mathrm{SU}}\nolimits}
\newcommand{\GL}{\mathop{\mathrm{GL}}\nolimits}
\newcommand{\ad}{\ensuremath\operatorname{ad}}
\newcommand{\id}{\ensuremath\operatorname{id}}
\newcommand{\Span}{\operatorname{Span}}
\newcommand{\mf}{\mathfrak}
\newcommand{\mc}[1]{\mathcal{#1}}
\renewcommand{\Re}{\operatorname{Re}}
\newcommand{\Ind}{\operatorname{Ind}}
\newcommand{\sign}{\operatorname{sign}}
\newcommand{\End}{\operatorname{End}}
\newcommand{\Aut}{\operatorname{Aut}}
\newcommand{\HC}{Harish-Chandra }
\author{Daniil Klyuev}
\title{On unitarizable \HC bimodules for deformations of Kleinian singularities.}
\begin{document}
\newtheorem{thr}{Theorem}[section]
\newtheorem*{thr*}{Theorem}
\newtheorem{lem}[thr]{Lemma}
\newtheorem*{lem*}{Lemma}
\newtheorem{cor}[thr]{Corollary}%corollary
\newtheorem*{cor*}{Corollary}
\newtheorem{prop}[thr]{Proposition}
\newtheorem*{prop*}{Proposition}
\newtheorem{stat}[thr]{Statement}
\newtheorem*{stat*}{Statement}
%РЈСЂР°, РІСЃРµ СЃС‚Р°Р»Рѕ С†РІРµС‚РЅС‹Рј
\theoremstyle{definition}
\newtheorem{defn}[thr]{Definition}
\theoremstyle{remark}
\newtheorem{rem}[thr]{Remark}
\newtheorem*{rem*}{Remark}
\newtheorem{example}[thr]{Example}
\maketitle
\begin{abstract}
The notion of a \HC bimodule, i.e. finitely generated $U(\mf{g})$-bimodule with locally finite adjoint action, was generalized to any filtered algebra in a work of Losev [Ivan Losev, Dimensions of irreducible modules over W-algebras and
Goldie ranks. arXiv:1209.1083]. Similarly to the classical case we can define the notion of a unitarizable bimodule. We investigate a question when the regular bimodule, i.e. the algebra itself, for a deformation of Kleinian singularity of type $A$ is unitarizable. We obtain a partial classification of unitarizable regular bimodules.

%\HC bimodules over deformations of Kleinian singularities are connected to classical \HC bimodules. 

%Harish-Chandra bimodules exist and are connected with classical Harish-Chandra bimodules. Unitarizability of \HC bimodules for Kleinian singularities is important because of this connection. Unitarizability of regular bimodule is interesting for physicists. We get a partial classification result of unitarizable regular bimodules.
\end{abstract}
\tableofcontents
\section{Introduction}
We say that a $\ZN_{\geq 0}$-filtered associative algebra $\mc{A}$ over $\CN$ with unit is almost commutative if $\gr\mc{A}$ is commutative. Let $\mc{A}$ be an almost commutative filtered algebra, $\tau$ be a linear antiinvolution of $\mc{A}$ that preserves the filtration. Fix $d>0$ such that for all $i,j\geq 0$ we have $[\mc{A}_{\leq i},\mc{A}_{\leq j}]\subset\mc{A}_{\leq i+j-d}$.

\begin{defn}[~\cite{Lo2014},~6.1.2]
Suppose that $M$ is a $\ZN_{\geq 0}$-filtered $\mc{A}$-module. We say that $M$ is a Harish-Chandra $(\mc{A},\tau)$-module if
\begin{enumerate}
\item
$\gr M$ is finitely generated over $\gr\mc{A}$.
\item
For every $a\in\mc{A}_{\leq i}$ with $\tau(a)=-a$ we have $aM_{\leq j}\subset M_{\leq i+j-d}$.
\end{enumerate}
\end{defn}
\begin{example}
Suppose that $\mc{B}$ is an almost commutative algebra, $\mc{A}=\mc{B}\otimes\mc{B}^{opp}$, $\tau(b_1\otimes b_2)=b_2\otimes b_1$. In this case a Harish-Chandra $(\mc{A},\tau)$-module is called a Harish-Chandra $\mc{B}$-bimodule.
\end{example}
\begin{example}
\label{ExClassicalAndATau}
We can link this definition to classical Harish-Chandra modules. Suppose that $\mf{g}$ is a reductive Lie algebra, $\mc{A}=U(\mf{g})$ with natural filtration. Suppose that $\tau$ is an antiinvolution of $\mf{g}$, then $\mf{k}=\mf{g}^{-\tau}$ is a reductive subalgebra of $\mf{g}$. From  $\tau\colon\mf{g}\to\mf{g}$ we get $\tau\colon \mc{A}\to\mc{A}$. It is proved in~\cite{Lo2014} that a Harish-Chandra $(\mc{A},\tau)$-module is the same as a $(\mf{g},\mf{k})$-module, which means finitely generated $\mf{g}$-module with locally finite action of $\mf{k}$.
\end{example}
We will work in the following setting:
\begin{defn}
Suppose that $A$ is a graded algebra. We say that $(\mc{A},\chi)$ is a filtered deformation of $A$ if $\mc{A}$ is a filtered algebra and $\chi$ is an isomorphism $\gr\mc{A} \to A$.
\end{defn}
Suppose that $A$ is a graded Poisson algebra such that the Poisson bracket has degree $-d$, $(\mc{A},\chi)$ is a filtered deformation of $A$. Suppose that $[\mc{A}_{\leq i},\mc{A}_{\leq j}]\subset\mc{A}_{\leq i+j-d}$. From the commutator on $\mc{A}$ we get a Poisson bracket on $\gr\mc{A}$.
\begin{defn}
If $\chi$ sends the Poisson bracket on $\gr\mc{A}$ to the Poisson bracket on $A$ we say that $(\mc{A},\chi)$ is a quantization of $A$.
\end{defn}

We are interested in quantizations of Kleinian singularities. Suppose that $\Gamma$ is a finite subgroup of $\SL(2,\CN)$, $\mc{A}$ is a quantization of $\CN[u,v]^{\Gamma}$. In section~\ref{SecHCModules} we will classify conjugacy classes of antiinvolutions of $\mc{A}$. We will be working with Harish-Chandra $(\mc{A},\tau)$-modules and $\mc{A}$-bimodules.

Our motivation for working with Harish-Chandra bimodules over deformations of Kleinian singulariries is the connection between classical \HC bimodules over $U(\mf{g})$ and \HC bimodules over deformations of Kleinian singularities given by restriction functors. Namely, let $e,f,h$ be an $\mf{sl}_2$-triple in $\mf{g}$, $S=e+\ker\ad f$ be the Slodowy slice. We can attach to $e$ an algebra $\mc{W}$, a certain filtered deformation of $\CN[S]$~(\cite{Pr}). In~\cite{Lo2011} Losev constructed a restriction functor from the category of \HC bimodules over $U(\mf{g})$ to the category of $Q$-equivariant \HC bimodules over $\mc{W}$, where $Q$ is a centralizer of $\{e,f,h\}$. Suppose that the Dynkin diagram $\Phi$ corresponding to the group $G$ is simply laced. Let $\Gamma$ be a finite subgroup of $\SL(2,\CN)$ that corresponds to $\Phi$. If we take $e$ in the subregular orbit then the Slodowy slice $S$ with the map $S\to \mf{g}/G\cong \mf{h}/W$ is a universal deformation of $\CN[x,y]^{\Gamma}$.

%Restriction functors that we will describe later connect 

%Consider nilpotent cone $\mc{N}$ in semisimple Lie algebra $\mf{g}$. Losev in~\cite{Lo2017} proved that every deformation of $\CN[\mc{N}]$ is a quotient of $U(\mf{g})$.

Suppose that $\tau$ is an antiinvolution of a reductive Lie algebra $\mf{g}$, $\mf{k}=\mf{g}^{-\tau}$ is the corresponding reductive subalgebra. Consider an $\mf{sl}_2$-triple $e,f,h$ in $\mf{g}$ such that $\tau e=e$, $\tau f=f$, $\tau h=-h$. A more general restriction functor constructed in~\cite{Lo2014},~6.1.2 sends $(\mf{g},\mf{k})$-modules to $(\mc{W},\tau)$ \HC modules, where $\tau$ is antiinvolution of $\mc{W}$ induced from $\tau$.

Now we move to our main object of study. Suppose that $\tau$ commutes with the standard conjugation on $\mf{g}$. Then the composition of $-\tau$ and the conjugation is an antilinear involution of $\mf{g}$. The space of fixed points of this antilinear involution is a real form $\mf{g}_{\RN,\tau}$ of $\mf{g}$.

\begin{defn}
Suppose that $V$ is a $(\mf{g},\mf{k})$-module. We say that $V$ is unitarizable if there exists a positive definite Hermitian form on $V$ such that $\mf{g}_{\RN,\tau}$ acts by anti-Hermitian operators.
\end{defn}

This definition generalizes to \HC $(\mc{A},\tau)$-modules.

\begin{defn}
Suppose that $\mc{A}$ is an almost commutative algebra, $\tau$ is an antiinvolution on $\mc{A}$, $r$ is an antilinear involution on $\mc{A}$ such that $r\tau=\tau r$, $V$ is a \HC $(\mc{A},\tau)$-module. We say that $V$ is unitarizable if there exists a positive definite Hermitian inner product $(\cdot,\cdot)$ on $V$ such that $(au,v)=(u,r\tau(a)v)$ for all $a\in\mc{A}$, $u,v\in V$.
\end{defn}

The restriction functor is expected to send unitarizable modules to unitarizable modules.   This gives motivation for studying unitarizable \HC $(\mc{A},\tau)$-modules and $\mc{A}$-bimodules.

Another motivation comes from physics: the question of unitarizablity of regular bimodule for deformations of Kleinian singularities of type $A$ appears in paper~\cite{BPR} that discusses connections between deformation quantizations and three-dimensional superconformal field theories. %In Section~3.4 ... problem of unitarizablity for deformations of coordinate algebras of hyperK\"ahler cones. One of the main
%as one of the main examples

\begin{rem}
In our situation there are new feature compared to classical \HC modules: Hermitian and unitary structure on a given irreducible \HC module is not necessarily unique, see Remark~\ref{RemManyForms} But they may depend on finitely many parameters. Also there exist unitarizable non-semisimple bimodules, see Remark~\ref{RemNonSemisimplicity}.
\end{rem}

\begin{rem}
Out situation is similar to~\cite{Seth} that studies invariant Hermitian forms on representations of rational Cherednik algebras.
\end{rem}

The article is organized as follows. We are working in the case $\Gamma=C_n$: $\mc{A}$ is a deformation of $\CN[x,y]^{C_n}=\CN[x^n,y^n,xy]$. In section~\ref{SecHCModules} we classify aniinvolutions and antilinear involutions on $\mc{A}$ and study unitarizable $(\mc{A},\tau)$-modules for a certain $\tau$.

In section~\ref{SecHCBimodules} we study irreducible unitarizable \HC bimodules over $\mc{A}$. We first recall the classification of irreducible unitarizable bimodules in case $n=2$. For $n>2$ we restrict our attention to the regular bimodule.

Our main result is a partial classification of unitarizable regular bimodules in the case $\Gamma=C_n$. Recall~\cite{Ho} that quantizations of $\CN[x,y]^{C_n}$ are in one-to-one correspondence with polynomials $P(x)$ of degree $n$ with fixed leading coefficient: to $P(x)$ corresponds the algebra generated by $e,f,h$ with relations $[h,e]=2e$, $[h,f]=-2f$, $ef=P(h-1)$, $fe=P(h+1)$. 

When $P(x)\in\RN[ix]$, there exists an antilinear involution $r$ on $\mc{A}$ such that $r(e)=-f$, $r(f)=-e$, $r(h)=-h$. When we replace $P$ with $\lambda P$ we get the same antilinear involution for $\lambda>0$ but a different anilinear involution for $\lambda<0$, so our answer will depend on the sign of leading coefficient of $i^{n}P$.

The result can be formulated as follows:
\begin{thr}
\label{ThrMain}
\begin{enumerate}
\item
Suppose that $P(x)$ has at least three roots $\alpha$ with multiplicities with $|\Re\alpha|<1$. Then the regular bimodule is unitarizable.
\item
Suppose that $n=2m$, $P(x)$ has leading coefficient $(-1)^m$. Then
\begin{enumerate}
\item
If $P(x)$ has a root $\alpha$ with $|\Re\alpha|<1$ then the regular bimodule is unitarizable.
\item
If for all roots $\alpha$ of $P(x)$ one has $|\Re\alpha|>1$ then the regular bimodule is not unitarizable.
\end{enumerate}
\end{enumerate}
\end{thr}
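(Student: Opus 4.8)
The plan is to convert unitarizability of the regular bimodule into a positivity statement about a single linear functional on $\mc{A}$, and then to reduce that to finitely many sign conditions that can be read off from the multiset of roots of $P$.

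\textbf{Step 1 (reduction to a positive trace-like functional).} Suppose $(\cdot,\cdot)$ is a positive-definite Hermitian form on $\mc{A}$ with $(\xi u,v)=(u,r\tau(\xi)v)$ for $\xi\in\mc{A}\otimes\mc{A}^{opp}$, and set $\ell(a):=(1,a)$. Writing out the $\mc{A}\otimes\mc{A}^{opp}$-invariance shows that the whole form is recovered from $\ell$ in the shape $(a,v)=\ell(a^{\dagger}v)$, where $a^{\dagger}$ is the antilinear anti-involution of $\mc{A}$ produced by $r$ together with the swap; that $\ell$ is cyclic (a trace, up to the canonical automorphism built into $r\tau$); that $\ell$ satisfies a reality identity $\ell(a)=\overline{\ell(a^{\dagger})}$, which one may assume after averaging $\ell$ with $\overline{\ell\circ\dagger}$; and that positive-definiteness of $(\cdot,\cdot)$ is exactly positive-definiteness of $a,b\mapsto\ell(a^{\dagger}b)$. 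Conversely any such $\ell$ yields a unitary structure by a GNS construction on $\mc{A}$. So the question becomes: does $\mc{A}$ carry a faithful positive $\dagger$-compatible trace?

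\textbf{Step 2 (weight-space decoupling).} Decompose $\mc{A}=\bigoplus_{k\in\ZN}\mc{A}_k$ by $\ad h$-weight, so $\mc{A}_k=e^{k}\CN[h]$ for $k\ge0$ and $\mc{A}_k=f^{-k}\CN[h]$ for $k\le0$, and record $e^{k}f^{k}=\prod_{j=1}^{k}P(h-2j+1)$, $f^{k}e^{k}=\prod_{j=1}^{k}P(h+2j-1)$. The trace property forces $\ell$ to be determined by its restriction $\ell_0$ to $\CN[h]$, subject to $\ell_0\big(P(h-1)g(h)\big)=\ell_0\big(P(h+1)g(h+2)\big)$ for all $g$. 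The form $a,b\mapsto\ell(a^{\dagger}b)$ respects the weight grading, so its positivity decouples into one condition for each $k$; on the $k$-block it is a Shapovalov-type Hermitian form on $\CN[h]$ assembled from $\ell_0$ and the two products above. Telescoping with the trace identity rewrites each $k$-block, after recentering, as positivity of $\ell_0$ tested against $\prod_{j}\lvert P(\,\cdot\,+2j-1)\rvert^{2}$ on a line $\Re h=\mathrm{const}$, times the sign $(-1)^{k}$ and, for odd $k$, one extra factor $\pm P(h-k)$. Thus unitarizability amounts to choosing an admissible $\ell_0$ that makes all of these conditions positive simultaneously.

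\textbf{Step 3 (constructions; parts (1) and (2a)).} Solving the telescoping identity, admissible $\ell_0$ form a finite-dimensional space of jet functionals concentrated at points controlled by the roots of $P$; a root $\alpha$ with $\lvert\Re\alpha\rvert<1$ is precisely what allows positive mass to be placed strictly inside the strip $-1<\Re h<1$, the region where every $\prod_{j}\lvert P(\,\cdot\,)\rvert^{2}$ from Step 2 is nonzero and where the alternation $(-1)^{k}$ can be absorbed by the sign of $\ell_0$. Since the factors entering the $k$-blocks stabilize in sign for large $\lvert k\rvert$, only finitely many conditions are actually binding. For part (1), given three roots with $\lvert\Re\alpha\rvert<1$ (counted with multiplicity), take $\ell_0$ to be an explicit combination of jet functionals at them, fix the signs to cancel the alternation, and check positivity of the binding $k$-blocks directly; three roots give exactly enough independent positive mass to meet those conditions at once. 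For part (2a), the hypothesis $n=2m$ with leading coefficient $(-1)^{m}$ — equivalently, $i^{n}P$ has positive leading coefficient — makes $(-1)^{k}$ and the extra odd-$k$ factor align, the conditions collapse, and a single root in the strip suffices: build $\ell_0$ from one jet functional.

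\textbf{Step 4 (non-unitarizability, part (2b); the main obstacle).} Here one must exclude \emph{every} admissible $\ell_0$, not just the obvious candidates, and this is the crux. The plan is a choice-free sign argument. Under the hypotheses of (2b) all roots of $P$ lie in $\lvert\Re\alpha\rvert>1$, so the telescoping identity forces any admissible $\ell_0$ to be supported outside the strip, and on that support $P$, $P(\,\cdot\,\pm2)$, $\dots$ keep fixed signs; positivity of the $k=0$ block then pins down the sign of $\ell_0$, whereas — because the leading coefficient has the prescribed value — positivity of the $k=1$ block (equivalently the $k=-1$ block) forces the opposite sign of $\ell_0$, its extra factor $P(h-1)$ being negative on the support. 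The contradiction shows no positive $\ell_0$ exists. I expect the technical heart of the theorem to be exactly this step: isolating the right invariant — in effect the sign of $(-1)^{k}$ times the leading coefficient of the $k$-block form — and showing it is forced to flip with $k$ unless some root sits inside the strip; a secondary nuisance is the half-integer recentering needed in Step 2 when $k$ is odd.
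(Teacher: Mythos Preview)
Your Steps 1 and 2 head in the right direction and roughly match the paper's reduction: invariant Hermitian forms correspond to traces $T$ supported on $\CN[h]$ with $T(S(h-1)P(h-1))=T(S(h+1)P(h+1))$, and positivity is tested weight-by-weight. But your description of the $k$-blocks is already imprecise: the weight-$2$ block is not ``$P(h-1)$ times a nonnegative factor on $i\RN$''. The paper computes that for $a=R(h-1)e$,
\[
(a,a)=-T\bigl(\Re_0(\ovl{R}(1-h)R(h-1)P(h-1))\bigr),
\]
and the point is that $\ovl{R}(1-x)R(x-1)$ is nonnegative on the line $\Re x=1$, \emph{not} on $\Re x=0$. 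After applying $\Re_0$ you get a polynomial whose sign on $i\RN$ is uncontrolled. So there is no clean factorization ``positive polynomial times $P(h-1)$'' to feed into a sign argument.

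This breaks Step 4. You write that ``the telescoping identity forces any admissible $\ell_0$ to be supported outside the strip'' and then compare signs of the $k=0$ and $k=1$ blocks. But a trace here is a linear functional on an $(n-1)$-dimensional quotient of $\CN[h]$; it has no intrinsic ``support'', and even if you try to represent it by a measure or a sum of jets, nothing forces that representation to live where $P$ has fixed sign. More concretely, to derive a contradiction you would need to exhibit, for every admissible $T$, a polynomial $Q$ with $Q\ge 0$ on $i\RN$ and $T(Q)\le 0$, or a polynomial of the form $-\Re_0(\ovl{R}(1-x)R(x-1)P(x-1))$ with $T$ of the wrong sign on it. Your argument does neither; it presumes the $k=1$ expression is ``$P(h-1)$ times something positive'', which is false. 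The paper's proof of (2b) is genuinely harder: it shows the cone $C$ contains $0$ by explicitly \emph{constructing} a nonzero $F\in\CN[x]$ with $\Re F\ge 0$ on $i\RN$ and $\Re(F(x-1)P(x-1))\ge 0$ on $i\RN$ (Proposition~\ref{StatWhenCcontainsZero}), via an analytic ``good approximation'' of $1/P$ by products of shifted polynomials (Corollary~\ref{CorGoodApprox}, Proposition~\ref{PropFromGoodApproxToNonUnit}). That construction is the real content of the non-unitarizability direction and is not replaceable by a parity check.

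Step 3 is also looser than you suggest. For (2a) an explicit positive trace does exist (the paper gives two: the analytic weight $w(x)$ in \S\ref{SubSecAnalytic} and the harmonic-function argument in \S\ref{SubSubSecHarmonic}), so your plan there is viable. But for part (1) the paper does \emph{not} build $\ell_0$ from jets at three roots; instead it proves the dual statement that $0\notin C$ via a winding-number / index inequality (Lemma~\ref{LemWindingNumber}, Corollary~\ref{CorCdoesNotContainZero}), then reduces to a $P$ with all roots in the strip where semidefinite forms are automatically definite. Your ``three roots give exactly enough independent positive mass'' is a heuristic, not an argument: you would need to show that a specific linear combination of jets is positive on \emph{every} $Q\ge 0$ on $i\RN$ and on every $-\Re_0(\cdots P(\cdot-1))$, and there is no reason three jets suffice for that without further structure.
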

\begin{rem*}
This gives the complete answer for $P(x)=(-x^2)^m+\ldots$ that has no roots with real part $1$.
\end{rem*}
\begin{rem*}
In the case when $n$ is odd the number of roots $\alpha$ with $|\Re\alpha|<1$ is odd and $P(x)$ has a purely imaginary root. Hence if $P(x)$ has another root $\alpha$ with $|\Re\alpha|<1$ then the regular bimodule is unitarizable.
\end{rem*}

The proof of this theorem uses analytic lemmas that are stated and proved in appendix. After proving this theorem we give other proofs of unitarizability in certain cases.% and consider constructions that can be useful for general Harish-Chandra bimodules.
%We will use following convention: Hermitian form is linear in first variable and antilinear in second.
\subsubsection{Acknowledgments.}
I am grateful to Ivan Losev for formulation of the problem, Ivan Losev and Pavel Etingof for stimulating discussions and remarks on the previous versions of this paper, Pavel Etingof and Fedor Petrov for providing additional proofs of positivity of certain traces.
\section{Harish-Chandra modules}
\label{SecHCModules}
\subsection{Classification of antiinvolutions}

Suppose that $\mc{A}$ is a quantization of $A=\CN[x,y]^{C_n}$. Let $s$ be an involution of $\mc{A}$. Then $\gr s$ is an involution of $A$ that preserves the Poisson bracket. If $s$ is an antiinvolution then $\gr s$ changes sign of the Poisson bracket. If $s$ is an antilinear involution then $\gr s$ is an antilinear involution that preserves the Poisson bracket. In this case the composition of $s$ and the standard conjugation is an automorphism of $A$.

\begin{lem}
Every homogeneous automorphism of $A$ is given by a homogeneous automorphism of $\CN[x,y]$, i.e. $\Aut A=N_{\GL_2(\CN)}(C_n)/C_n$.
\end{lem}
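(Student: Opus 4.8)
The plan is to identify $\Spec A$ geometrically and exploit the fundamental group of its smooth locus. Since $A=\CN[x,y]^{C_n}$, we have $X:=\Spec A=\CN^2/C_n$; write $\pi\colon\CN^2\to X$ for the quotient. The $\ZN_{\geq0}$-grading on $A$ corresponds to the scaling $\CN^*$-action on $X$ induced from the scaling action on $\CN^2$, and since $A_0=\CN$ the cone point $o=\pi(0)$ is the \emph{unique} $\CN^*$-fixed point of $X$. A homogeneous automorphism $\phi$ of $A$ is the same thing as a $\CN^*$-equivariant automorphism $\bar\phi$ of the variety $X$; such a $\bar\phi$ permutes the $\CN^*$-fixed points, hence fixes $o$, and therefore restricts to a $\CN^*$-equivariant biholomorphism of $U:=X\setminus\{o\}=(\CN^2\setminus\{0\})/C_n$.

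First I would pass to the universal cover. A nontrivial element of $C_n\subset\SL(2,\CN)$ is conjugate to $\mathrm{diag}(\zeta^k,\zeta^{-k})$ with $\zeta^k\neq1$, so it has no nonzero fixed vector; thus $C_n$ acts freely on $\CN^2\setminus\{0\}$, which is simply connected (it retracts onto $S^3$), so $\pi\colon\CN^2\setminus\{0\}\to U$ is the universal covering with deck group $C_n$. Lifting $\bar\phi|_U$ and its inverse through $\pi$ and composing, one obtains a biholomorphism $\tilde\phi$ of $\CN^2\setminus\{0\}$ with $\pi\circ\tilde\phi=\bar\phi\circ\pi$. Since $\pi\circ\gamma=\pi$ for $\gamma\in C_n$, the map $\tilde\phi\circ\gamma$ is again a lift of $\bar\phi\circ\pi$, so $\tilde\phi\gamma\tilde\phi^{-1}$ is a deck transformation; hence $\tilde\phi$ normalizes $C_n$ inside $\Aut_{\mathrm{hol}}(\CN^2\setminus\{0\})$.

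Next I would promote $\tilde\phi$ to a linear map. For fixed $t\in\CN^*$ the maps $x\mapsto\tilde\phi(tx)$ and $x\mapsto t\,\tilde\phi(x)$ both lift the same self-map of $U$, hence differ by a deck transformation $\gamma_t\in C_n$; the assignment $t\mapsto\gamma_t$ is continuous, $\CN^*$ is connected and $\gamma_1=\id$, so $\gamma_t=\id$ for all $t$, i.e. $\tilde\phi$ is $\CN^*$-equivariant. By Hartogs's extension theorem its components extend holomorphically across the origin, and $\CN^*$-equivariance of the extension forces $\tilde\phi(0)=0$ together with homogeneity of degree $1$, so the extension is linear; the same applies to $\tilde\phi^{-1}$, whence $\tilde\phi\in\GL(2,\CN)$ and it normalizes $C_n$. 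This linear $\tilde\phi$ descends to an automorphism of $X$ agreeing with $\bar\phi$ on the dense open $U$, hence equal to $\bar\phi$, so $\phi$ is induced by $\tilde\phi\in N_{\GL(2,\CN)}(C_n)$. Conversely each element of this normalizer induces a homogeneous automorphism of $A$, and one acting trivially on $A$ preserves every $C_n$-orbit in $\CN^2$; on the connected set $\CN^2\setminus\{0\}$, where the action is free, the resulting $C_n$-valued function is locally constant, so such an element lies in $C_n$. This gives $\Aut A\cong N_{\GL(2,\CN)}(C_n)/C_n$, uniformly in $n$.

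The main obstacle is that the lift $\tilde\phi$ is, a priori, merely biholomorphic; the substantive step is the third paragraph, where Hartogs's theorem together with the $\CN^*$-equivariance inherited from the grading forces $\tilde\phi$ to be linear and hence algebraic. A secondary point needing care is the bookkeeping of lifts — checking that $\tilde\phi$ normalizes $C_n$ and that its descent is $\bar\phi$ itself, not a composite with a deck transformation — which is handled by consistently comparing two lifts of the same map. One could instead argue algebraically for $n\geq3$ by writing $A=\CN[a,b,c]/(ab-c^n)$ with $\deg a=\deg b=n$, $\deg c=2$ and analyzing the action of a graded automorphism on the low-degree graded pieces, but that approach requires treating even $n$ separately and is less transparent.
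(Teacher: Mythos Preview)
Your proof is correct and takes a genuinely different route from the paper. The paper argues purely algebraically on the graded pieces: for $n>2$ one has $A_2=\CN h$, so $\phi(h)=ah$, and then the relation $ef=h^n$ together with $\phi(e),\phi(f)\in A_n$ forces $\phi(e)\in\CN e\cup\CN f$; for $n=2$ the degree-$2$ piece is three-dimensional and the relation $ef=h^2$ identifies the graded automorphism group with $\mathrm{O}_3(\CN)=\GL_2(\CN)/C_2$. Your argument instead passes to the smooth locus $U=(\CN^2\setminus\{0\})/C_n$, lifts $\bar\phi$ to a biholomorphism of the simply connected cover $\CN^2\setminus\{0\}$, and uses the $\CN^*$-equivariance coming from the grading plus Hartogs extension to force the lift to be linear. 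The paper's approach is more elementary and shorter, needing only a look at low-degree generators, at the cost of a case split at $n=2$; your approach is uniform in $n$ and would apply verbatim to any finite $\Gamma\subset\SL_2(\CN)$ (or more generally any finite $\Gamma\subset\GL_2(\CN)$ acting freely away from the origin), which is a real advantage if one wants the analogous statement for the other Kleinian singularities. Your closing remark slightly misstates the algebraic alternative: it is $n=2$, not all even $n$, that requires separate treatment, since $A_2$ is one-dimensional for every $n>2$.
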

\begin{proof}
Let $\phi$ be a homogeneous automorphism of $A=\CN[e,f,h]/(ef-h^n)$. Suppose that $n>2$. In this case $A_2=\CN h$. It follows that $\phi(h)=ah$ for some $a\in \CN$, $a\neq 0$. Therefore $\phi(e)\phi(f)=a^n h^n$. Since $\phi(e),\phi(f)\in A_n$ it is easy to see that $\phi(e)=be$ or $\phi(e)=bf$ for some $b\in \CN$. From this we deduce that $\phi$ is given by automorphism of $\CN[x,y]$.

Suppose that $n=2$. In this case $\phi$ is defined by $\phi|_{A_2}$. From $ef=h^2$ we deduce that $\phi\in \operatorname{O}_3(\CN)=\GL_2(\CN)/C_2$, hence $\phi$ is given by automorphism of $\CN[x,y]$.
\end{proof}

For $n>2$ we have $N(C_n)=\{\begin{pmatrix}
a & 0\\0 & d
\end{pmatrix}\mid a,d\in \CN\}\cup \{\begin{pmatrix}
0 & b\\c & 0
\end{pmatrix}\mid b,c\in \CN\}$.

Involutions of $A$ are given by elements of order $2$ in $N(C_n)/C_n$. It is easy to see that they are given by $(ad)^{2n}=a^2d^2=1$ or $(bc)^n=(bc)^2=1$. Denote $e^{\frac{\pi i}n}$ by $\eps$. We get the following elements:
\begin{enumerate}
\item
$\id$. The identity automorphism lifts to the identity automorphism, so we do not consider it below.
\item
$\begin{pmatrix}
1 & 0\\
0 & -1
\end{pmatrix}$. We get the  involution $e\mapsto e$, $f\mapsto (-1)^n f$, $h\mapsto -h$.
\item
$\begin{pmatrix}
\eps & 0 \\
0 & s\eps^{-1}
\end{pmatrix}$, where $s=\pm 1$. We get the involution $e\mapsto -e$, $f\mapsto -s^nf$, $h\mapsto sh$.
\item
$\begin{pmatrix}
0 & b\\
\pm b^{-1}& 0
\end{pmatrix}$, $b\in \CN$. In case $n$ is odd we have $\pm b^{-1}=b^{-1}$. Since every quantization has an automorphism corresponding to $\begin{pmatrix}
a & 0\\
0 & a^{-1}
\end{pmatrix}$ we can consider these elements up to conjugation. So we have two matrices $\begin{pmatrix}
0 & 1\\
s & 0
\end{pmatrix}$, $s=\pm 1$ and $s=1$ in case $n$ is odd. We get the involution $e\mapsto f$, $f\mapsto s^ne=e$, $h\mapsto sh$.
\end{enumerate}

Suppose that $\tau_0$ is one of these involutions. Suppose that $\mc{A}$ has an involution or antiinvolution $\tau$ such that $\gr\tau=\tau_0$. It is not hard to see that in this case we can choose generators $e,f,h$ of $\mc{A}$ such that $\tau$ acts on $\Span(e,f,h)$ as $\tau_0$. Hence we can say when $\tau_0$ lifts to $\tau$:
\begin{enumerate}
\item
The involution $e\mapsto e$, $f\mapsto (-1)^n f$, $h\mapsto -h$ lifts to an antiinvolution when $P(x-1)=(-1)^n P(1-x)$, in other words $P(x)=(-1)^nP(-x)$.
\item
The involution $e\mapsto -e$, $f\mapsto -s^nf$, $h\mapsto sh$ lifts to an involution in case $s=1$ for all $\mc{A}$ and lifts to an antiinvolution in case $s=-1$, when $(-1)^n P(x)=P(-x)$.
\item
The involution $e\mapsto f$, $f\mapsto e$, $h\mapsto sh$ lifts to an antiinvolution in case $s=1$  for all quantizations and lifts to an involution in case $s=-1$, $n$ even, when $P(x)=P(-x)$.
\end{enumerate}
Whenever $P(x)=(-1)^nP(-x)$ denote by $\tau$ an antiinvolution $e\mapsto e$, $f\mapsto (-1)^n f$, $h\mapsto -h$.

Arguing similarly we see that real forms are classified by elements of $N_{\SL(2)}(\Gamma)/\Gamma$ up to $A\sim B^{-1}A\ovl{B}$, $B\in N_{\SL(2)}(\Gamma)$. If \[B\in H=\{\begin{pmatrix}
a & 0\\0 & d
\end{pmatrix}\mid ad=1\}\] then this equivalence lifts to equivalence in quantization. Thus the lift when it exists is unique.

Recall that $A= \begin{pmatrix}
a & 0\\0 & d
\end{pmatrix}$ or $A=\begin{pmatrix}
0 & b\\c & 0
\end{pmatrix}$. 

In the first case we have $a\ovl{a}=d\ovl{d}=ad=1$. It is easy to see that all such matrices are equivalent by elements of $H$. In the second case we have $(b\ovl{c})^n=-bc=1$. It is easy to see that all such matrices are equivalent by elements of $H$.

So we have two antilinear involutions of $A$ up to a conjugation. The first is the standard complex conjugation. The second is $e\mapsto -f$, $f\mapsto -e$, $h\mapsto -h$ when $n$ is even, $e\mapsto i^n f$, $f\mapsto i^n e$, $h\mapsto -h$ when is $n$ odd. Denote this involution by $r$. In the case when $n$ is even $r$ lifts to an antilinear involution when $\ovl{P}(-x)=P(x)$.

In case when $n$ is even and $P(x)=P(-x)\in \RN[x]$ we have $r\tau=\tau r$. In this case $r\tau$ sends $h$ to $h$, $e$ to $-f$, $f$ to $-e$.  In case when $n$ is odd we have $r\tau\neq\tau r$. So we will classify unitarizable irreducible modules in case when $n$ is even.

\subsection{Classification of unitarizable irreducible \HC modules}
\label{SubSecIrredModules}
\begin{thr}
In the case when $n$ is even irreducible \HC $(\mc{A},\tau)$-modules are in one-to-one correspondence with arithmetic progressions with difference $2$ that start at a root of $P$ plus $1$ or $-\infty$ and end at root of $P$ minus $1$ or $\infty$. In the case when $n$ is odd irreducible \HC $(\mc{A},\tau)$-modules are in one-to-one correspondence with arithmetic progressions with difference $n$ that start at a root of $P$ plus $1$ and end at root of $P$ minus $1$ or $\infty$. Namely, the arithmetic progression is the set of weights of $V$. 
\end{thr}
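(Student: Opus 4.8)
The plan is to realize every \HC $(\mc{A},\tau)$-module concretely as a weight module for the Cartan element $h$, and to match such modules with the combinatorial data in the statement. First I would analyze the action of $h$: since $\tau(h)=-h$, condition (2) in the definition of a \HC module forces $h$ to act with a filtration shift of $\leq -d$, so (using that $\gr M$ is finitely generated over the commutative $\gr\mc{A}$, which here has Krull dimension $2$ while the nilpotent cone is a curve) $h$ acts locally finitely, and in fact semisimply on an irreducible module. Thus $M=\bigoplus_{\mu}M_\mu$ decomposes into $h$-eigenspaces, and the relations $[h,e]=2e$, $[h,f]=-2f$ (resp.\ with shift $n$ when $n$ is odd, coming from the grading of $\CN[x,y]^{C_n}$) show that $e$ raises the weight by $2$ (resp.\ $n$) and $f$ lowers it by the same amount. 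So $e$ and $f$ move us along an arithmetic progression of weights with difference $2$ or $n$.

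Next I would pin down which weights occur and show each weight space is one-dimensional. The associated graded $\gr M$ is a finitely generated module over $\CN[e,f,h]/(ef-h^n)$ supported (by the \HC condition, essentially because $h$ has the filtration-lowering property) on the union of the two lines $\{f=h=0\}$ and $\{e=h=0\}$ — equivalently $\gr M$ is supported on a one-dimensional subvariety. A Hilbert-series/associated-graded count then gives that each $h$-weight space of $M$ is at most one-dimensional, and that the set of weights is a union of at most two "half-line" pieces glued along the relations. On a single one-dimensional weight space $M_\mu$ with basis vector $v$, the composite $fe$ acts by the scalar $P(h+1)$ and $ef$ by $P(h-1)$; so the chain of weights $\mu,\mu+2,\mu+4,\dots$ built by applying $e$ terminates on the top precisely when $P(\mu_{\text{top}}+1)=0$ (i.e. the top weight is a root of $P$ minus $1$) or continues forever ($+\infty$), and symmetrically the chain built by $f$ terminates at the bottom when $P(\mu_{\text{bot}}-1)=0$ (bottom weight is a root of $P$ plus $1$) or runs to $-\infty$. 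Irreducibility forces the module to be exactly one such maximal chain, with no further glueing, which produces exactly the arithmetic progressions described; conversely, given such a progression one builds $M$ by declaring a one-dimensional space at each weight and letting $e,f$ act by the scalars dictated by $P$, checking the relations $ef=P(h-1)$, $fe=P(h+1)$ hold and that the module is irreducible and \HC.

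For the odd case the only change is bookkeeping: the grading of $\CN[x^n,y^n,xy]$ makes $e,f$ have degree $n$ rather than $2$, so weights move in steps of $n$; the antiinvolution $\tau$ still sends $h\mapsto -h$, and a progression symmetric about $0$ of step $n$ automatically contains $0$ unless it is the two-sided infinite one, which is why in the odd case a genuine $(\mc{A},\tau)$-module must start at a root of $P$ plus $1$ (the $-\infty$ option disappears while $+\infty$ survives for the obvious one-sided Verma-type module). I expect the main obstacle to be the rigorous argument that $h$ acts semisimply with one-dimensional weight spaces — that is, extracting from "$\gr M$ finitely generated and supported on a curve in the nilpotent cone" the sharp statement about weight multiplicities and the absence of Jordan blocks for $h$; everything downstream is then a direct computation with the defining relations $ef=P(h-1)$, $fe=P(h+1)$. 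A secondary point requiring care is ruling out, on an irreducible module, the configuration where the $e$-chain and $f$-chain reconnect into a loop — this cannot happen because the weights of a loop would have to be an arithmetic progression that is simultaneously bounded above and below, forcing $P$ to vanish at two points differing by an even integer (resp.\ a multiple of $n$) with all intermediate scalars $P(\mu\pm1)$ nonzero, which is consistent, but then the module is finite-dimensional and one checks directly it is the claimed progression rather than a genuine cycle, since $\mc{A}$ has no finite-dimensional quotients supported on a closed point off the origin.
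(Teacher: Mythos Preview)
Your overall shape is right and matches the paper in the even case: $h$ acts locally finitely because $\tau(h)=-h$, weight spaces are one-dimensional, and the endpoint conditions come from $ef=P(h-1)$, $fe=P(h+1)$. Two remarks, one cosmetic and one substantive.

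First, the one-dimensionality of weight spaces does not need any associated-graded support analysis. The paper's argument is elementary: pick an $h$-eigenvector $v$ in the irreducible module $V$ and observe that $\sum_{k\ge 0}(\CN e^kv+\CN f^kv)$ is already a submodule (check it is stable under $e,f,h$ using the relations), hence equals $V$. This immediately gives that each weight space is spanned by a single $e^kv$ or $f^kv$. Your detour through ``$\gr M$ supported on a curve'' and Hilbert series is not wrong in spirit, but it is the step you yourself flag as the main obstacle, and it is unnecessary.

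Second, and more seriously, your treatment of the odd case is incorrect. The weight step is governed by $[h,e]=2e$, not by the filtration degree of $e$; conflating the two is a mistake. More importantly, your explanation for why the $-\infty$ option disappears (``a progression symmetric about $0$ of step $n$ automatically contains $0$'') has no content: nothing forces the weight set to be symmetric about $0$. The actual mechanism is different and uses $\tau$ on $f$, not just on $h$. When $n$ is odd the antiinvolution is $\tau(e)=e$, $\tau(f)=(-1)^n f=-f$, $\tau(h)=-h$, so $f$ is $\tau$-antisymmetric. By condition~(2) in the definition of a \HC module, $f$ must then lower the filtration by $d$; equivalently, both $f$ and $h$ act as $0$ on $\gr M$, so $\gr M$ is a finitely generated $\CN[e]$-module. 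This forces the set of $h$-weights to be bounded below, which is exactly why the progression must start at a root of $P$ plus $1$ and cannot run to $-\infty$. Without this observation your odd-case argument does not go through.

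Finally, the ``loop'' worry in your last paragraph is a non-issue: the weights are complex numbers forming an arithmetic progression with real nonzero step, so no cycle is possible.
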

\begin{proof}
From the definitions and the equality $\tau h=-h$ we see that $h$ acts locally finitely on any \HC $(\mc{A},\tau)$-module. Suppose that $V$ is an irreducible \HC $(\mc{A},\tau)$-module. Let $v\in V$ be an eigenvector of $h$: $hv=\lambda_0 v$. It is easy to see that $\sum_{k\geq 0}(\CN e^k v+\CN f^k v)$ is a submodule of $V$. 

Hence $V=\oplus_{a\leq k\leq b}V_{\lambda_0+kn}$, where $a,b\in ZN\cup{\pm\infty}$, $V_{\lambda}$ is a one-dimensional eigenspace $h$. Suppose that $a\neq-\infty$, $v\in V_{\lambda_0+an}$. Then $fv=0$, hence $P(h-1)v=efv=0$. We deduce that $P(\lambda_0+an-1)=0$. If $b\neq \infty$ we similarly have $P(\lambda_0+bn+1)=0$. So the set of weights of $V$ is an arithmetic progression that begins at a root of $P$ plus $1$ or $-\infty$ and ends at a root of $P$ minus $1$ or $\infty$. On the other hand from such arithmetic progression we get $V$ in a straightforward way and define $V_{\leq k}=\oplus_{|\lambda-\lambda_0|\leq k}V_{\lambda}$. Thus $V$ becomes a filtered module that satisfies the definition of s \HC module in the case when $n$ is even. 

When $n$ is odd we have $\tau(f)=-f$. Suppose that $V$ is a \HC $(\mc{A},\tau)$-module. Then $f,h\in \gr\mc{A}$ act on $\gr V$ as zero. It follows that $\gr V$ is a finitely generated $\CN[e]$-module. Therefore the set of weights of $V$ is bounded below. On the other hand if the set of weights of $V$ is bounded below we define $V_{\leq k}=\oplus_{\lambda\leq k}V_{\lambda}$. With this filtration $V$ satisfies the definition of a \HC module. We deduce the theorem.
\end{proof}

Now we turn to the question when the irreducible module $V$ with the set of weights $\Lambda$ is unitarizable. We assume that $n$ is even. When both $r$ and $\tau$ are defined we get $P(x)=Q(-x)=\ovl{P}(x)$, hence $P(x)\in\RN[x]$. 
% To analyze the second condition 
\begin{thr}
Suppose that $V$ is an irreducible \HC $(\mc{A},\tau)$-module. Then $V$ is unitarizable if and only if
\begin{enumerate}
\item
$\Lambda\subset\RN$. 

Let $\lambda_m$ be the smallest element of $\Lambda$ if it exists. Denote by $\Lambda'$ the set $\Lambda\setminus\{\lambda_m\}$, otherwise (if $\Lambda$ is not bounded below) $\Lambda'=\Lambda$.
\item
$P(\lambda-1)<0$ for all $\lambda\in\Lambda'$.
\end{enumerate} 
\end{thr}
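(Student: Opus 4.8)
The plan is to work with the explicit basis of weight vectors and compute the Hermitian form recursively. Write $\Lambda$ as the set of weights; for each $\lambda \in \Lambda$ fix a nonzero $v_\lambda \in V_\lambda$. Since $r\tau$ sends $h \mapsto h$, $e \mapsto -f$, $f \mapsto -e$, a Hermitian form invariant in the sense $(au,v) = (u, r\tau(a)v)$ must have distinct $h$-weight spaces orthogonal (because $h$ is $r\tau$-self-adjoint and its eigenvalues are real on the weight spaces — this already forces $\Lambda \subset \RN$, giving necessity of (1)), so the form is determined by the numbers $c_\lambda := (v_\lambda, v_\lambda)$. The relations $(e v_\lambda, v_{\lambda+2}) = (v_\lambda, -f v_{\lambda+2})$ together with $fe = P(h+1)$, $ef = P(h-1)$ let me express $c_{\lambda+2}$ in terms of $c_\lambda$: normalizing $e v_\lambda = v_{\lambda+2}$, one gets $f v_{\lambda+2} = P(\lambda+1) v_\lambda$, and then $(v_{\lambda+2}, v_{\lambda+2}) = (e v_\lambda, v_{\lambda+2}) = -(v_\lambda, f v_{\lambda+2}) = -\overline{P(\lambda+1)}\, c_\lambda = -P(\lambda+1) c_\lambda$ since $P \in \RN[x]$. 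So consecutive diagonal entries alternate in a way governed by the sign of $-P(\lambda+1) = -P((\lambda+2)-1)$.

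First I would set up this recursion carefully, being careful at the bottom of $\Lambda$: if $\lambda_m$ is the smallest weight, then $f v_{\lambda_m} = 0$ and $c_{\lambda_m}$ is a free positive parameter (one can rescale it to $1$), and the recursion then propagates upward. If $\Lambda$ is unbounded below the recursion has no anchor and $c_\lambda$ for one chosen $\lambda$ is the free parameter. Positive-definiteness of the form is exactly the statement that all $c_\lambda > 0$. Running the recursion $c_{\lambda+2} = -P(\lambda+1) c_\lambda$ starting from a positive value, one sees $c_\lambda > 0$ for every $\lambda$ if and only if $-P(\mu-1) > 0$, i.e. $P(\mu - 1) < 0$, for every $\mu \in \Lambda$ that is obtained as $\lambda + 2$ for some $\lambda \in \Lambda$ — that is, for every $\mu \in \Lambda'$. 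This gives condition (2), in both directions: if some $P(\mu-1) \geq 0$ with $\mu \in \Lambda'$ then either $c_\mu \leq 0$ (failure) or, handling the degenerate $P(\mu-1)=0$ case, the weight space structure is inconsistent with $V$ being the given module (since $P(\mu-1)=0$ with $\mu$ not the top weight would split off a submodule). Conversely if all these values are negative, the recursively defined $c_\lambda$ are all positive and one checks this $(\cdot,\cdot)$ is genuinely $r\tau$-invariant on all of $\mc{A}$ — it suffices to check invariance on the generators $e, f, h$, which is built into the construction.

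The main obstacle I expect is not the recursion itself but two bookkeeping points: (i) pinning down exactly which endpoint conditions occur — the interaction between "$\lambda_m$ is a root of $P$ plus $1$" (from the previous theorem) and the indexing of $\Lambda'$, so that the quantifier in condition (2) lands on precisely the right set; and (ii) justifying that for an \emph{irreducible} module the form is unique up to a positive scalar when $\Lambda$ is bounded below, but may genuinely depend on a parameter otherwise, and that existence of \emph{some} positive-definite invariant form is equivalent to the purely numerical condition. Showing necessity of $\Lambda \subset \RN$ also deserves a clean argument: an invariant positive-definite form makes $h$ a self-adjoint operator (as $r\tau(h) = h$), hence its eigenvalues on the $\mc{A}$-stable weight decomposition are real. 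Once these are handled, the theorem follows by reading off the sign pattern of the recursion.
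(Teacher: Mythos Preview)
Your approach is essentially the same as the paper's: reduce to a recursion on the diagonal values $c_\lambda=(v_\lambda,v_\lambda)$ via the invariance relation $(eu,v)=-(u,fv)$ together with $ef=P(h-1)$, and read off that positivity propagates iff $P(\lambda-1)<0$ for each $\lambda\in\Lambda'$. The paper phrases the step as $(u,u)=\frac{-1}{P(\lambda-1)}(fu,fu)$ (using $f$ downward rather than your $e$ upward), but this is the identical computation, and your handling of the endpoint and the self-adjointness of $h$ forcing $\Lambda\subset\RN$ matches the paper's brief argument.
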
 
\begin{proof}
A form $(\cdot,\cdot)$ is invariant if and only if $(u,hv)=(hu,v)$ and $(u,ev)=-(fu,v)$ for all $u,v\in V$. The first condition tells us that $\Lambda\subset \RN$. The second condition can be checked for $u\in V_{\lambda}$, $v\in V_{\lambda-2}$, $\lambda\in \Lambda'$. Since $V_{\lambda}$ is one-dimensional we can assume that $v=fu$. We get $(u,efu)=-(fu,fu)$. So $(u,u)=\frac{-1}{P(\lambda-1)}(fu,fu)$ for all $u\in V_{\lambda}$, $\lambda\in \Lambda'$. This defines $(\cdot,\cdot)$ uniquely. This form is positive definite if and only if $P(\lambda-1)<0$ for all $\lambda\in \Lambda'$. We deduce the theorem.
\end{proof}
\paragraph{Example.}
Suppose that $n=2$. There are several cases.
\begin{enumerate}
\item
Suppose that $P$ has two conjugate complex roots. If the leading coefficient of $P$ is negative then unitarizable irreducible \HC modules are in one-to-one correspondence with elements of $\RN/2\ZN$. In the other case there are no unitarizable irreducible \HC modules.
\item
Suppose that $P$ has two real roots $\alpha\leq \beta$.
\begin{enumerate}
\item
Suppose that the leading coefficient of $P$ is positive. If $\beta-\alpha$ is an even positive integer then there is one unitarizable irreducible $(\mc{A},\tau)$-module, a finite-dimensional module. In the other case there are no unitarizable irreducible $(\mc{A},\tau)$-modules.
\item
Suppose that the leading coefficient of $P$ is negative. In this case we always have two unitarizable irreducible $(\mc{A},\tau)$-modules: one corresponds to $\Lambda=\alpha+1-2\ZN_{>0}$, the other corresponds to $\beta+1+2\ZN_{\geq 0}$. In case $\beta-\alpha<2$ there are unitarizable irreducible modules that correspond to arithmetic progressions $\Lambda\subset \RN/2\ZN$ that do not intersect $[\alpha+1,\beta+1]$. Finite-dimensional module is unitarizable only when it is one-dimensional.
\end{enumerate}
\end{enumerate}  

A classical construction of \HC connects unitarizable irreducible \HC modules and unitary representations of groups $\SL(2,\RN)$ and $\SU(2,\CN)$. More precisely, there is a one-to-one correspondence between 
\begin{enumerate}
\item
Unitarizable irreducible \HC modules with integral weights for all $P=(\lambda+1)^2-x^2$, where $(\lambda+1)^2\in \RN$.
\item
Unitary representations of $\SL(2,\RN)$.
\end{enumerate}
Similarly for $P=(\lambda+1)^2+x^2$ and unitary representations $\SU(2,\CN)$.

Looking at classification of irreducible \HC modules for $P$ with positive leading coefficient we recover classification of irreducible unitary representations of $\SU(2,\CN)$: there exists a unique irreducible unitary representations of given finite dimension.

Looking at classification of irreducible \HC modules for $P$ with negative leading coefficient we recover Bargmann classification. When $(\lambda+1)^2<0$ $P$ has two different complex conjugate roots and we get two integrable unitarizable modules: one with even weights and one with odd weights. They correspond to principal series representations of $\SL(2,\CN)$. When $\lambda=-1$ $P$ has a root with multiplicity two and we get three integrable unitarizable modules. One of them corresponds to principal series representation of $\SL(2,\CN)$, other two correspond to limit of discrete series representations.  When $-1<\lambda<0$ then $P$ has two roots $\alpha<\beta<\alpha+2$ and we get an integrable unitarizable module corresponding to complementary series representation. When $\lambda\in \ZN$ we have $\alpha,\beta\in \ZN$ and we get two integrable unitarizalbe modules corresponding to discrete series representations. The trivial module corresponds to the trivial representation.
\section{Harish-Chandra bimodules}
\label{SecHCBimodules}
\subsection{Case of $\mf{sl}_2$}
\label{SubSecSL2}
\paragraph*{Connection to unitary representations of $\SL(2,\CN)$.}
We write the classification of irreducible \HC bimodules in case $n=2$ for reader's convenience.
%TODO when send to journal delete this and definitely delete dual bimodule.

Consider an infinite-dimensional irreducible unitary representation $V$ of $\SL(2,\CN)$ considered as a real group. Such representations are in one-to-one correspondence with irreducible unitary $(\mf{sl}(2,\CN),\operatorname{SU}(2))$-modules $V$, in other words $\mf{sl}(2,\CN)$-modules such that $\mf{su}_2$ acts locally finitely with a positive definite Hermitian form $(\cdot,\cdot)$ such that for any $x\in \mf{sl}(2,\CN), u,v\in V$ we have $(xu,v)+(u,xv)=0$.

The lie algebra $\mf{sl}_2$ is a real form of $\mf{sl}_2\times\mf{sl}_2^{op}$ corresponding to the antilinear involution $(a,b)\mapsto (b^*,a^*)$. This allows us to introduce a $\CN$-linear action of $\mf{sl}_2\times \mf{sl}_2^{op}$ on $V$:

\[(a,b).v=\frac{1}{2}((a+b^*,b+a^*)+(a-b^*,b-a^*)).v=\frac{1}{2}((a+b^*)v-i(ia-ib^*)v).\]

We see that the elements $(a,-a)$ act locally finitely. The condition on $(\cdot,\cdot)$ is as follows:

\begin{multline*}
2((a,b).u,v)=((a+b^*)u,v)-i((ia-ib^*)u,v)=\\
-(u,(a+b^*)v)-i(u,(ia-ib^*)v)=(u,(-a-b^*)v+i(ia-ib^*)v)=2(u,(-b^*,-a^*)v).
\end{multline*}

It follows that $V$ is an $\mf{sl}_2$-bimodule such that the adjoint action is locally finite and $(au,v)=-(u,va^*)$, $(ub,v)=-(u,b^*v)$. Using the example~\ref{ExClassicalAndATau} from the introduction we see that this is the same as a Harish-Chandra $U(\mf{sl}_2)$-bimodule. Rewrite the conditions on $(\cdot,\cdot)$ in terms of generators:

\begin{align*}
(eu,v)=-(u,vf) \quad &
(ue,v)=-(u,fv)\\
(hu,v)=-(u,vh) \quad &
(uh,v)=-(u,hv)\\
(fu,v)=-(u,ve) \quad &
(uf,v)=-(u,ev).\\
\end{align*}

It is easy to see that this is equivalent to the following:

\begin{align}
([e,u],v)=(u,[f,v])\quad &
([h,u],v)=(u,[h,v])\quad &
([f,u],v)=(u,[e,v])\\
(eu,v)=(u,-vf)
\end{align}

This gives a motivation for our choice of $\tau\colon\mc{A}\to \mc{A}$: $\tau(e)=-f$, $\tau(f)=-e$, $\tau(h)=-h$.

Suppose that $\mc{A}$ is a noncommutative deformation of $\CN[x,y]^{C_2}$, this is the same as a central reduction of $U(\mf{sl}_2)$: $\mc{A}=U(\mf{sl}_2)/(ef+fe+\frac{h^2}{2}-\frac{\lambda^2}2-\lambda)$. We assume that $-\frac{\lambda^2}2-\lambda\in\RN$, this is the same as $(\lambda+1)^2\in \RN$. In this subsection we classify unitarizable irreducible Harish-Chandra $\mc{A}$-bimodules.

Classification of irreducible \HC $\mc{A}$-bimodules is a known result~\cite{BG}:
\begin{enumerate}
\item
$\lambda\in\ZN\setminus \{-1\}$. Since $\lambda$ and $-2-\lambda$ give the same deformation we can assume that $\lambda\geq 0$. In this case $\mc{A}$ has a finite-dimensional representation $V$, $\dim V=\lambda+1$. Denote by $I$ the annihilator of $V$. There are two irreducible Harish-Chandra bimodules: $I$, $\End(V)$.
\item
$\lambda\in\ZN+\frac{1}{2}$. In the case $\lambda=-\frac{1}{2}$ the algebra $\mc{A}$ is an invariant subalgebra of the Weyl algebra under the natural action of $C_2$. We have two irreducible Harish-Chandra bimodules: $\mc{A}$ and the second isotypic component of the action of $C_2$. In the other cases we get an equivalent category, so it also contains two irreducible bimodules.
\item
In all other cases there is one irreducible Harish-Chandra bimodule, $\mc{A}$.
\end{enumerate}

First we consider the relations

$$
([e,u],v)=(u,[f,v]);\quad 
([h,u],v)=(u,[h,v]);\quad 
([f,u],v)=(u,[e,v]).
$$

Since the adjoint action of $\mf{sl}_2$ on $V$ is locally finite, $V$ is a direct sum of irreducible $\mf{sl}_2$-modules. It is not hard to see that for an irreducible bimodule $V$ all irreducible $\mf{sl}_2$-modules are distinct.

Suppose that $V_k$ is an irreducible $\mf{sl}_2$-submodule of highest weight $k$, $U_k$ is obtained from $V_k$ using the automorphism of $\mf{sl}_2$ that sends $e,f,h$ to $-f,-e,-h$ respectively. Then $(\cdot,\cdot)$ gives an invariant sesquilinear pairing between $V_k$ and $U_k$. Since $U_k$, $V_k$ are isomorphic irreducible modules, this form is unique up to a scalar. There exists a positive definite invariant Hermitian form on $V_1$, hence there exists a positive definite invariant Hermitian form on $V_k=S^k V_1$ for all $k\geq 0$.

A similar argument shows that $V_k$ is orthogonal to $U_l$ when $k\neq l$.

We are left with the condition

\begin{equation}
\label{EqInvarEq1}
(eu,v)=(u,-vf).
\end{equation} 

It follows from Proposition~\ref{StatUniqueHermitianForm2} below that there exists a unique up to a scalar invariant Hermitian form on $V$. It remains to check when this form is positive.

%In my thesis I did additional computations proving existence of invariant Hermitian form. In case of $n=2$ its existence follows from. 

\begin{thr}
The following bimodules are unitarizable:
\begin{enumerate}
\item
$\CN$ for $\lambda=0$.
\item
The regular bimodule for $\lambda$ such that $(\lambda+1)^2<1$.
\item
The annihilator of the finite-dimensional representation in the case $\lambda\in \ZN_{\geq 0}$.
\item
Non regular irreducible bimodule in the case $\lambda\in \frac{1}{2}+\ZN$.
\end{enumerate}
\end{thr}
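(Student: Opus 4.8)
\medskip

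\noindent\textbf{Proof plan.} By the uniqueness just recalled, for each of the four bimodules it suffices to produce a positive-definite invariant Hermitian form, equivalently to check that the (up to scaling unique) invariant form is definite. I would first record the reduction used throughout. For an irreducible \HC bimodule $V=\bigoplus_j V_{m_j}$ under the adjoint $\mf{sl}_2$-action (pairwise non-isomorphic summands), the three relations $([e,u],v)=(u,[f,v])$, $([h,u],v)=(u,[h,v])$, $([f,u],v)=(u,[e,v])$ force an invariant form to vanish on $V_m\times V_{m'}$ for $m\neq m'$ and to be definite on each block $V_m$ (the value on a weight space of $V_m$ is a positive multiple of the value on the next one, the multiplier being a ratio of $\mf{sl}_2$-structure constants). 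Hence $V$ is unitarizable iff consecutive blocks carry the same sign, and this is governed by the one remaining relation $(eu,v)=-(u,vf)$.

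The core is the regular bimodule $\mc{A}=\bigoplus_{k\geq 0}V_{2k}$: the block $V_{2k}$ has highest-weight vector $e^k$, and $e\cdot e^k=e^{k+1}$ is the highest-weight vector of $V_{2k+2}$, so $(eu,v)=-(u,vf)$ with $u=e^k$, $v=e^{k+1}$ gives, writing $\pi_{2k}$ for the projection to the $V_{2k}$-block and using $e^{k+1}f=e^k(ef)=e^kP(h-1)$,
\[
(e^{k+1},e^{k+1})=-\bigl(e^k,\pi_{2k}(e^kP(h-1))\bigr).
\]
Moving $P(h-1)$ past $e^k$ via $e^kg(h)=g(h-2k)e^k$ and resolving $e^k,e^kh,e^kh^2$ into isotypic components using $\ad(f)e^m=-m\,e^{m-1}(h+m-1)$ (and the analogous expansion of $\ad(f)^2e^{k+2}$ for the weight-$2k$ vector of $V_{2k+4}$), one computes $\pi_{2k}(e^kP(h-1))=\tfrac{2(k+1)}{2k+3}\,P(k+1)\,e^k$, hence
\[
(e^{k+1},e^{k+1})=-\frac{2(k+1)}{2k+3}\,P(k+1)\,(e^k,e^k).
\]
As the prefactor is positive, normalizing $(1,1)>0$ shows the regular bimodule is unitarizable iff $P(j)<0$ for every integer $j\geq 1$, and since $P(x)=\tfrac14\bigl((\lambda+1)^2-x^2\bigr)$ this is exactly $(\lambda+1)^2<1$ --- item~(2). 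For item~(3): with $\lambda\in\ZN_{\geq 0}$ one has $I=\operatorname{Ann}(V)=\bigoplus_{k\geq\lambda+1}V_{2k}$ (since $\mc{A}/I=\End(V)=\bigoplus_{k=0}^{\lambda}V_{2k}$), and the same recursion, restricted to $I$, starts at the free bottom block $V_{2\lambda+2}$ (highest-weight vector $e^{\lambda+1}$) and only meets $P(j)$ with $j\geq\lambda+2$, all negative; so $I$ is unitarizable. Item~(1) is immediate: for $\lambda=0$ the bimodule $\CN$ is one-dimensional with $e,f,h$ acting by $0$, and $(1,1)=1$ is invariant and positive.

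Item~(4) is proved in the same way. For $\lambda\in\tfrac12+\ZN$ the non-regular simple bimodule $N$ is, under the adjoint action, $\bigoplus_{k\geq k_0}V_{2k+1}$ for a suitable $k_0=k_0(\lambda)$ --- with $k_0=0$ at $\lambda=-\tfrac12$, where $\mc{A}=A_1^{C_2}$ and $N=A_1^{\mathrm{odd}}$, $n_k=q^{2k+1}$. Running the recursion on the highest-weight vectors $n_k$ of $V_{2k+1}$, with $n_{k+1}=e\cdot n_k$, yields $(n_{k+1},n_{k+1})=-c_k\,P\!\bigl(k+\tfrac32\bigr)(n_k,n_k)$ with $c_k>0$; and $k_0$ is exactly the least integer with $\bigl|k+\tfrac32\bigr|>|\lambda+1|$, i.e.\ the bottom block of $N$ sits just past the roots $\pm(\lambda+1)$ of $P$, so every factor $-P(k+\tfrac32)$ is positive and $N$ is unitarizable. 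I expect the main obstacle to be twofold: the telescoping block-projection computation giving the clean factor $-\tfrac{2(k+1)}{2k+3}P(k+1)$ in~(2) --- which I would do by expanding $\ad(f)^2e^{k+2}$ and inverting a $3\times 3$ change of basis among $e^k,e^kh,e^kh^2$ --- and, in~(4), pinning down the adjoint type of $N$ (hence $k_0(\lambda)$) and the constant $c_k$, for which the Weyl-algebra model at $\lambda=-\tfrac12$ serves as a concrete check.
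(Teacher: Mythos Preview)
Your strategy coincides with the paper's: reduce unitarizability to the sign of the ratio between consecutive isotypic blocks under the adjoint $\mf{sl}_2$-action, and show that ratio is a positive constant times $(k+1)^2-(\lambda+1)^2$. Your recursion $(e^{k+1},e^{k+1})=-\tfrac{2(k+1)}{2k+3}P(k+1)(e^k,e^k)$ is exactly the paper's lemma $(eu,eu)=\tfrac{k+1}{4k+6}\bigl((k+1)^2-(\lambda+1)^2\bigr)(u,u)$ rewritten via $P(x)=\tfrac14\bigl((\lambda+1)^2-x^2\bigr)$.

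The only genuine difference is how that lemma is obtained. The paper never writes $u=e^k$: it takes an arbitrary highest-weight vector $u\in V_l$, expresses $[f,eu]$ as $-(k+1)hu+k(k+1)u$ using $[ef,u]$, and then evaluates $(hu,[f,eu])$ in two ways to extract $(eu,eu)$. This is shorter than the $3\times3$ projection you outline, and, crucially, it applies verbatim to items~(3) and~(4) without needing explicit highest-weight vectors for $I$ or $N$. In your setup the same uniformity is actually available --- the identity $\ad(f)(eu)=-(k+1)(uh+ku)$ holds for \emph{any} highest-weight $u$, not just $e^k$, so your projection argument also generalizes --- but as written you would have to redo the computation for $N$, whereas the paper's formulation handles all four cases at once and then simply reads off $l=2\lambda+2$ for~(3) and~(4).
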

\begin{proof}
Denote an irreducible bimodule by $M$.

To check that the form $(\cdot,\cdot)$ is positive it is enough to check that $(\cdot,\cdot)|_{V_l}$ and $(\cdot,\cdot)_{V_{l+2}}$ have the same sign. We are going to find an equality of the form $(eu,eu)=a(u,u)$. Let $u$ be a highest weight vector of $V_l$. If $eu=0$ then it is not hard to see that $\oplus_{m\leq l} V_m$ is a subrepresentation. Since $V$ is irreducible $V_{l+2}$ does not occur in $V$. So we can assume that $eu\neq 0$. Hence it is a highest weight vector of $V_{l+2}$. Denote $l=2k$.

\begin{lem*}
We have \[(eu,eu)=\frac{k+1}{4k+6}((k+1)^2-(\lambda+1)^2)(u,u).\]
\end{lem*}
\begin{proof}

Let us compute projections of $hu$ on $V_l$, $V_{l+2}$. This is equivalent to expressing $hu$ as a linear combination of $u$ and $[f,eu]$.

We have $[f,eu]=-hu+e[f,u]=-hu+[ef,u]$. The term $[ef,u]$ equals to

\[[ef,u]=[-\frac{h^2}{4}+\frac{h}{2},u]=k(-\frac{1}{2}(hu+uh)+u)=k(-hu+(k+1)u).\]

Hence $[f,eu]=-(k+1)hu+k(k+1)u$. It follows that $hu=-\frac{1}{k+1}[f,eu]+ku$. In particular,
\begin{equation}
\label{EqProductuHu}
(u,hu)=k(u,u).
\end{equation}

We have $[e,[f,eu]]=[[e,f],eu]]+[f,[e,eu]]=[h,eu]=(2k+2)eu$.

Hence \begin{equation}
\label{EqHuOneWay}
(hu,[f,eu])=-\frac{1}{k+1}([f,eu],[f,eu])=-\frac{1}{k+1}([e,[f,eu]],eu)=-2(eu,eu).
\end{equation} 

On the other hand 
\begin{multline}
\label{EqHuBeginningOtherWay}
(hu,[f,eu])=-(u,h[f,eu])=-(u,h(-(k+1)hu+k(k+1)u))=
(u,(k+1)h^2u-k(k+1)hu).
\end{multline}
We have $2fe=-\frac{h^2}{2}-h+\frac{\lambda^2}{2}+\lambda$. It follows that \[(k+1)h^2-k(k+1)h=-4(k+1)fe-(k+2)(k+1)h+(k+1)(\lambda^2+2\lambda).\]

Continuing~\eqref{EqHuBeginningOtherWay} and using~\eqref{EqProductuHu} we have \begin{multline*}
(u,(k+1)h^2u-k(k+1)u)=(u,-4(k+1)fe-(k+2)(k+1)hu+(k+1)(\lambda^2+2\lambda)u)=\\
-4(k+1)(u,feu)-(k+1)(k+2)(u,hu)+(k+1)(\lambda^2+2\lambda)(u,u)=\\
4(k+1)(ue,eu)-(k+1)(k+2)k(u,u)+(k+1)(\lambda^2+2\lambda)(u,u)=\\
4(k+1)(eu,eu)+(k+1)((\lambda+1)^2-(k+1)^2)(u,u).
\end{multline*}

It follows that \[(hu,[f,eu])=4(k+1)(eu,eu)+(k+1)((\lambda+1)^2-(k+1)^2)(u,u).\] Comparing with~\eqref{EqHuOneWay} we have
\[-2(eu,eu)=4(k+1)(eu,eu)+(k+1)((\lambda+1)^2-(k+1)^2)(u,u).\] Hence \[(eu,eu)=\frac{k+1}{4k+6}((k+1)^2-(\lambda+1)^2)(u,u).\]

\end{proof}
Let $V_l\subset M$ be an irreducible $\mf{sl}_2$-submodule with the minimal highest weight. There are two cases:
\begin{enumerate}
\item
$eu=0$. As we saw in this case $M=\oplus_{m\leq l}V_l$, hence $M=V_l$. It is easy to deduce that $M=\CN$. This is a unitarizable bimodule.
\item
$eu\neq 0$. Let $2k=l$. If $M$ is unitarizable then $(k+1)^2-(\lambda+1)^2>0$. On the other hand if $(k+1)^2-(\lambda+1)^2>0$ then for all $V_m\subset M$ we have $m\geq l$, $(k_1+1)^2-(\lambda+1)^2\geq (k+1)^2-(\lambda+1)^2>0$ where $2k_1=m$. 

In case $\lambda\notin\RN$ we deduce that $M$ is unitarizable. Suppose that $\lambda\in \RN$.

Assume that $\lambda\geq -1$. We deduce that $M$ is unitarizable if and only if for the minimal highest weight $l$ we have $l>2\lambda$. It is not hard to see that for the annihilator of the finite-dimensional representation in case $\lambda\in\ZN_{\geq 0}$ and the second bimodule in case $\lambda\in \frac{1}{2}+\ZN$ we have $l=2\lambda+2$.

There remain two cases: $M=\mc{A}$ and $M$ finite dimensional. In these cases $l=0$. So unitarizability is equivalent to $-1\leq \lambda<0$. For these $\lambda$ $\mc{A}$ is irreducible and has no finite-dimensional representations.
\end{enumerate} 
\end{proof}
These unitarizable bimodules correspond to irreducible unitary representations of $\SL(2,\CN)$ as follows. The regular bimodule for $\lambda=-1$, the annihilator of the finite-dimensional representation for $\lambda\in \ZN_{\geq 0}$ and non-regular irreducible bimodule in the case $\lambda\in \frac{1}{2}+\ZN$ correspond to the principal series representations. The regular bimodule for $-1<\lambda<0$ corresponds to the complementary series representations. See~\cite{Ta}, for example.

Other unitary representations correspond to $\mc{A}_{\lambda}$-$\mc{A}_{\ovl{\lambda}}$ bimodules for $(\lambda+1)^2$ complex.

\subsubsection{The case of the Weyl algebra}
We will need the definition of a twisted trace.
\begin{defn}
Suppose that $A$ is an algebra over $\CN$, $g\colon A\to A$ is an automorphism. We say that a linear map $T\colon A\to \CN$ is a $g$-twisted trace if $T(ab)=T(bg(a))$ for all $a,b\in A$.
\end{defn}
Let $s\colon\mc{A}\to\mc{A}$ be the map $sx=-x$, $sy=-y$.

The Weyl algebra $\mc{A}=\CN\langle x,y\rangle/(xy-yx-1)$ has no nonzero traces, but has a nonzero $s$-twisted trace. Consider the antilinear automorphism $r\colon \mc{A}\to\mc{A}: x\mapsto y$, $y\mapsto -x$. We have $r^2=s$. We get an antilinear involution on $\mc{A}\otimes \mc{A}$: $a\otimes b\mapsto r^{-1}(b)\otimes r(a)$. So we can ask if $\mc{A}$ has an invariant positive definite form.

We see that $\mc{A}^{C_2}$ is generated by $e=\frac12x^2, h=-\frac{xy+yx}2, f=-\frac12y^2$. This is a deformation of $\CN[x,y]^{C_2}$ with parameter $\lambda=-\frac{1}{2}$. We have $re=-f$, $rf=-e$, $rh=-h$. This is the real form that we considered in the case of $\mf{sl}_2$. Using the results for this case we see that both $\mc{A}^{C_2}$ and the second isotypic component $M$ have an $\mc{A}^{C_2}$-invariant positive definite form. We get a form $(\cdot,\cdot)$ on $\mc{A}$ such that $(x,x)=\frac{1}{2}(1,1)$, $M\perp\mc{A}^{C_2}$, $(\cdot,\cdot)$ restricted to $M$ or $\mc{A}^{C_2}$ is an $\mc{A}^{C_2}$-invariant positive definite form.

It remains to check that $(au,v)=(u,vr(a))$, $(ua,v)=(u,r^{-1}(a)v)$ for all $a,u,v\in\mc{A}$.

Denote by $T$ the $s$-twisted trace on $\mc{A}$ such that $T(1)=1$. It is enough to check that $(u,v)=T(ur(v))$ for all $u,v\in \mc{A}$. For $u$, $v$ in different isotypic components this is clear. For $u,v\in \mc{A}^{C_2}$ we have $(u,v)=(ur(v),1)=T'(ur(v))$, where $T'$ is an invariant trace on $\mc{A}^{C_2}$ such that $T'(1)=1$. Such trace is unique, hence $T'=T|_{\mc{A}^{C_2}}$. It follows that $(u,v)=T(ur(v))$. 

For $u,v\in M$ we have $u=ax$ or $u=ay$. Assume that $u=ax$. Hence $(u,v)=(ax,v)=(x,vr(a))$. We have $vr(a)=\alpha x+t$, where $\alpha\in \CN$, $t\in \CN y+\oplus_{k=1}^{\infty} V_{2k+1}$, $V_i$ is an irreducible $\mf{sl}_2$-module of highest weight $i$. Since both $(\cdot,\cdot)$ and $T$ are $\mf{sl}_2$-invariant we get $(x,vr(a))=\frac{\alpha}{2}$, $T(ur(v))=T(axr(v))=T(\alpha xy)=\frac{\alpha}{2}$.

Hence $(\cdot,\cdot)$ is $\mc{A}$-invariant and positive definite.
\subsection{Invariant Hermitian forms on the regular bimodule}
In $\mc{A}$ we have $[h,e]=2e$, $[h,f]=-2f$, $ef=P(h-1)$, $fe=P(h+1)$.

In the case $P(x)\in\RN[ix]$ we have an antilinear involution $e\mapsto -f$, $f\mapsto -e$, $h\mapsto -h$. Denote by $\ovl{a}$ the image of $a\in\mc{A}$ under this antilinear involution. The form $(\cdot,\cdot)$ is invariant if and only if

\begin{align*}
(eu,v)=-(u,vf) \quad &
(ue,v)=-(u,fv)\\
(hu,v)=-(u,vh) \quad &
(uh,v)=-(u,hv)\\
(fu,v)=-(u,ve) \quad &
(uf,v)=-(u,ev)
\end{align*}
for all $u,v\in\mc{A}$.

Since $(\cdot,\cdot)$ is Hermitian it is enough to consider the first three equalities:

\begin{align*}
(eu,v)=-(u,vf)\\
(hu,v)=-(u,vh)\\
(fu,v)=-(u,ve)
\end{align*}

Since $(\cdot,\cdot)$ is invariant with respect to $\ad h$ we see that weight spaces of $\ad h$ in $\mc{A}$ of different weight are orthogonal with respect to $(\cdot,\cdot)$.

We have $\mc{A}_{2k}=\{S(h)e^k\mid S(x)\in\CN[x]\}$ for $k\geq 0$ and $\mc{A}_{2k}=\{S(h)f^k\mid S(x)\in\CN[x]\}$ for $k\leq 0$.

It is obvious that $S(h)e^k=e^kS(h+2k)$, $S(h)f^k=f^kS(h-2k)$.

%Denote by $(\cdot,\cdot)_k$ the restriction of $(\cdot,\cdot)$ to $\mc{A}_{nk}$.

The following proposition is proved in a straightforward way.
\begin{prop*}
Invariant Hermitian forms on $\mc{A}$ are in one-to-one correspondence with traces on $\mc{A}$. The correspondence is as follows: from $(\cdot,\cdot)$ we get $T(a)=(a,1)$, from $T$ we get $(a,b)=T(a\ovl{b})$.
\end{prop*}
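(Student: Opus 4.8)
The plan is to establish a bijection between invariant Hermitian forms on $\mc{A}$ (the regular bimodule) and $\CN$-linear traces $T\colon\mc{A}\to\CN$, using the explicit correspondence $T(a)=(a,1)$ in one direction and $(a,b)=T(a\ovl{b})$ in the other. First I would check that if $(\cdot,\cdot)$ is an invariant Hermitian form, then $T(a):=(a,1)$ is a trace, i.e. $T(ab)=T(ba)$. Writing $T(ab)=(ab,1)$ and moving $a$ across using the left-multiplication part of invariance (note $\ovl{1}=1$, so $(ab,1)=(b,\ovl{a})$ type manipulations combined with right-invariance) should collapse to $(ba,1)=T(ba)$; the point is that the bimodule-invariance equations $(ue,v)=-(u,fv)$ etc. and $(eu,v)=-(u,vf)$ etc. together say precisely that left and right multiplication are adjoint to each other through $\ovl{(\cdot)}$, so the "two sides" of $a$ acting agree and cyclicity drops out. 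Concretely, since $\mc{A}$ is generated by $e,f,h$, it suffices to verify $T(xa)=T(ax)$ for $x\in\{e,f,h\}$, which is one application of the corresponding invariance identity plus the Hermitian symmetry.

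Next, in the reverse direction, given a trace $T$, I would define $(a,b):=T(a\ovl{b})$ and verify it is (i) sesquilinear, (ii) Hermitian, and (iii) invariant. Sesquilinearity is immediate from linearity of $T$ and antilinearity of $\ovl{(\cdot)}$. For the Hermitian property $(b,a)=\ovl{(a,b)}$ one needs $\ovl{T(c)}=T(\ovl{c})$ for the form to be conjugate-symmetric after using $\ovl{\ovl{a}}=a$; this follows because $\ovl{(\cdot)}$ is an antilinear \emph{anti}involution (it reverses products, being built from the antiinvolution structure — indeed $r$ composed appropriately), so $T(a\ovl{b})$ and $T(\ovl{(a\ovl b)}) = T(b\ovl a)$ are related, and cyclicity of $T$ gives $T(b\ovl a)=T(\ovl a \, b)$... here I should be careful: the cleanest route is to use that $\ovl{(\cdot)}$ sends the trace $T$ to another trace $a\mapsto\ovl{T(\ovl a)}$ and invoke whatever uniqueness/compatibility is needed, or simply to observe directly from $r(e)=-f,\ r(f)=-e,\ r(h)=-h$ reversing multiplication order that $\ovl{ab}=\ovl b\,\ovl a$, whence $T(a\ovl b)=T(\ovl b \, a)$ and conjugating gives the result. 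For invariance, e.g. $(eu,v)=T(eu\ovl v)$ and $-(u,vf)=-T(u\,\ovl{vf})=-T(u\,\ovl f\,\ovl v)=T(u\,e\,\ovl v)$ since $\ovl f=-e$; then cyclicity of $T$ moves the leading... actually one moves $e$ from the far left to just left of $\ovl v$: $T(eu\ovl v)=T(u\ovl v e)$ by cyclicity, and $T(u e\ovl v)$ — these agree only if $e$ commutes past $\ovl v$, which it does not, so instead I should match $T(eu\ovl v)$ with $T(ue\ovl v)$ directly via cyclicity applied as $T(e\cdot(u\ovl v))=T((u\ovl v)\cdot e)=T(u\ovl v e)$ and separately $T((ue)\ovl v) = T(\ovl v u e)$; equality of these two is again just cyclicity. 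So each of the six invariance identities reduces to a single instance of $T(xy)=T(yx)$ after substituting the values of $\ovl e,\ovl f,\ovl h$.

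Finally I would check the two constructions are mutually inverse: starting from $(\cdot,\cdot)$, forming $T(a)=(a,1)$, then forming $(a,b)'=T(a\ovl b)=(a\ovl b,1)$, and using right-invariance repeatedly to peel $\ovl b$ off and land back on $(a,b)$ — this uses that $\ovl{\ovl b}=b$ and that right multiplication by $\ovl b$ is adjoint to right multiplication by $b$ up to the sign bookkeeping in $\ovl{(\cdot)}$; since $\ovl 1=1$ the identity $(a\ovl b,1)=(a,b)$ is exactly the right-invariance of $(\cdot,\cdot)$ written out on generators of $b$. Conversely, from $T$ we get $(a,b)=T(a\ovl b)$ and then $T'(a)=(a,1)=T(a\ovl 1)=T(a)$, which is trivial. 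The main obstacle, such as it is, is purely bookkeeping: getting the signs right in $\ovl e=-f$, $\ovl f=-e$, $\ovl h=-h$ when verifying the six invariance equations and the Hermitian symmetry, and being careful that $\ovl{(\cdot)}$ is product-reversing so that $T(a\ovl b)$ interacts correctly with cyclicity of $T$; there is no conceptual difficulty, which is why the proposition is asserted to be "proved in a straightforward way."
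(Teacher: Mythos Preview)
Your overall plan is the right one and matches what the paper intends (the paper gives no proof beyond ``proved in a straightforward way''). There is, however, one concrete error in your bookkeeping that causes the confusion you notice midway through: the antilinear involution $a\mapsto\ovl{a}$ on $\mc{A}$ is \emph{not} product-reversing. It is an antilinear algebra \emph{automorphism}, so $\ovl{ab}=\ovl{a}\,\ovl{b}$, not $\ovl{b}\,\ovl{a}$. (Check: $\ovl{ef}=\ovl{e}\,\ovl{f}=(-f)(-e)=fe=P(h+1)$, which agrees with $\ovl{P(h-1)}=P(h+1)$ since $P\in\RN[ix]$.) With this corrected, the computation you got stuck on resolves immediately:
\[
-(u,vf)=-T\bigl(u\,\ovl{vf}\bigr)=-T\bigl(u\,\ovl{v}\,\ovl{f}\bigr)=T\bigl(u\,\ovl{v}\,e\bigr),
\]
and cyclicity gives $T(eu\,\ovl{v})=T(u\,\ovl{v}\,e)$, so $(eu,v)=-(u,vf)$ as required. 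The right-action identities $(ua,v)=(u,\ovl{a}v)$ then hold with no cyclicity at all once $\ovl{\cdot}$ multiplies in the correct order, since $\ovl{\ovl{a}v}=a\,\ovl{v}$.

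A second, smaller point: you correctly observe that the Hermitian symmetry $(b,a)=\ovl{(a,b)}$ needs $\ovl{T(c)}=T(\ovl{c})$, which is not automatic for an arbitrary trace. The paper is slightly loose here: immediately after the proposition it states that ``$(\cdot,\cdot)$ is Hermitian if and only if $T$ is real on $\RN[ih]$''. So the bijection, read precisely, is between invariant \emph{sesquilinear} forms and all traces, with the Hermitian condition on one side corresponding to this reality condition on the other. You need not (and cannot) derive Hermitian symmetry from the bare trace axiom alone.
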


Let us classify traces on $\mc{A}$. It is enough to check the condition $T(ab)=T(ba)$ when $a$ equals to $e,f$ or $h$. The equality $T(hb)=T(bh)$ says that $T$ is supported on $\mc{A}_0$. 

It follows that is enough to check the equality $T(eb)=T(be)$ for $b\in \mc{A}_{-2}$. In this case $b=fS(h-1)$. We have \[T(eb)=T(efS(h-1))=T(P(h-1)S(h-1)),\] \[T(be)=T(fS(h-1)e)=T(fe S(h+1))=T(P(h+1)S(h+1)).\]

Similarly it is enough to check the equality $T(fb)=T(bf)$ for $b\in \mc{A}_2$. In this case $b=e S(h+1)$. We have $T(fb)=T(fe S(h+1))=T(P(h+1)S(h+1))$. We also have $T(bf)=T(e S(h+1)f)=T(ef S(h-1))=T(P(h-1)S(h-1))$.

So we have proved the following proposition:

\begin{prop}
$T\colon\mc{A}\to \CN$ is a trace on $\mc{A}$ if and only if
\begin{enumerate}
\item
$T$ is supported on $\mc{A}_0$.
\item
$T(S(h-1)P(h-1))=T(S(h+1)P(h+1))$ for all $S \in\CN[x]$.
\end{enumerate}
\end{prop}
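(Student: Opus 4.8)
The plan is to characterize traces $T\colon\mc{A}\to\CN$ by reducing the defining condition $T(ab)=T(ba)$ to a manageable set of test cases, exploiting the weight grading of $\mc{A}$ under $\ad h$. Since $\mc{A}$ is generated by $e,f,h$, it suffices to verify the trace condition when $a$ ranges over these three generators and $b$ ranges over a spanning set of $\mc{A}$. I would organize the argument around the decomposition $\mc{A}=\bigoplus_{k\in\ZN}\mc{A}_{2k}$ into $\ad h$-weight spaces, together with the explicit description $\mc{A}_{2k}=\{S(h)e^k\}$ for $k\geq0$ and $\mc{A}_{2k}=\{S(h)f^k\}$ for $k\leq 0$, which lets me write any element of $\mc{A}$ concretely.

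\textbf{Step 1 (support condition).} First I would take $a=h$. For homogeneous $b\in\mc{A}_{2k}$ we have $hb-bh=[h,b]=2k\,b$, so $T(hb)=T(bh)$ forces $2k\,T(b)=0$, hence $T(b)=0$ whenever $k\neq 0$. Thus condition (1): $T$ is supported on $\mc{A}_0=\CN[h]$.

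\textbf{Step 2 (reduction to $e$ and $f$ on the correct weight spaces).} Given that $T$ kills all $\mc{A}_{2k}$ with $k\neq0$, the identity $T(ab)=T(ba)$ with $a=e$ is automatically satisfied unless $eb$ and $be$ can land in $\mc{A}_0$, i.e. unless $b\in\mc{A}_{-2}$; similarly for $a=f$ only $b\in\mc{A}_2$ matters. (For $a=h$ we already know $T(hb)=T(bh)$ on each weight space, so nothing new.) Moreover the case $a=f$, $b\in\mc{A}_2$ is equivalent to the case $a=e$, $b\in\mc{A}_{-2}$ after relabeling, because $T(fb)=T(bf)$ with $b=eS(h+1)$ unwinds to the same pair of expressions $T(P(h-1)S(h-1))$ and $T(P(h+1)S(h+1))$ — I would just record both computations. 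So the whole trace condition collapses to a single family of equalities.

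\textbf{Step 3 (the computation on $\mc{A}_{-2}$).} Write a general element of $\mc{A}_{-2}$ as $b=fS(h-1)$ with $S\in\CN[x]$ arbitrary (using $S(h)f=fS(h-2)$, the shift by $1$ rather than $2$ is just a convenient reindexing that matches the relations $ef=P(h-1)$, $fe=P(h+1)$). Then
\[
T(eb)=T\bigl(ef\,S(h-1)\bigr)=T\bigl(P(h-1)S(h-1)\bigr),
\]
\[
T(be)=T\bigl(f\,S(h-1)\,e\bigr)=T\bigl(fe\,S(h+1)\bigr)=T\bigl(P(h+1)S(h+1)\bigr),
\]
using only $[h,e]=2e$ to move $S(h-1)$ past $e$ and the relations $ef=P(h-1)$, $fe=P(h+1)$. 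Equating these for all $S$ gives condition (2). Conversely, running Steps 1–3 backwards shows that any $T$ satisfying (1) and (2) does satisfy $T(ab)=T(ba)$ for $a\in\{e,f,h\}$ and arbitrary $b$, hence (by bilinearity and the fact that $e,f,h$ generate) for all $a,b\in\mc{A}$; here I would note the standard fact that if $T(ab)=T(ba)$ holds for $a$ in a generating set and all $b$, it holds for all $a$, by induction on the length of a word in the generators together with $T((a_1a_2)b)=T(a_1(a_2b))=T((a_2b)a_1)=T(a_2(ba_1))=T((ba_1)a_2)=T(b(a_1a_2))$.

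\textbf{Main obstacle.} There is no serious obstacle — the statement is essentially bookkeeping. The one point that deserves care is the claim in Step 2 that the only genuinely new constraints come from $a\in\{e,f\}$ paired with $b$ in the $\ad h$-weight $\mp2$ subspace: this uses that $T$ is already known to vanish off $\mc{A}_0$, so all the other pairings $T(eb)$, $T(be)$ are $0=0$ trivially, and that the reduction from "all $a$" to "$a$ a generator" is legitimate. The second mildly delicate point is making sure the parametrization $b=fS(h-1)$ (resp.\ $b=eS(h+1)$) really ranges over all of $\mc{A}_{-2}$ (resp.\ $\mc{A}_{2}$) as $S$ runs over $\CN[x]$, which follows from $\mc{A}_{-2}=\{S(h)f\}=\{fS(h-2)\}$ and the substitution $S(x)\mapsto S(x+1)$ being a bijection of $\CN[x]$.
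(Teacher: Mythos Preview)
Your proof is correct and follows essentially the same approach as the paper: reduce to $a\in\{e,f,h\}$, use $a=h$ to get support on $\mc{A}_0$, then compute $T(eb)=T(be)$ for $b=fS(h-1)\in\mc{A}_{-2}$ (and symmetrically for $f$) to obtain condition~(2). You supply a bit more justification for the reduction to generators and the converse direction than the paper does, but the argument is otherwise identical.
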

Therefore the space of traces \[(\CN[x]/\{S(x+1)P(x+1)-S(x-1)P(x-1)\mid S \in\CN[x]\})^*\] has dimension $n-1$.

\begin{rem}
If $g$ is a filtration-preserving automorphism of $\mc{A}$ then we can consider a $g$-twisted trace: linear map $T\colon \mc{A}\to \CN$ such that $T(ab)=T(bg(a))$ for all $a,b\in \mc{A}$. We plan to discuss $g$-twisted traces on $\mc{A}$ in more details in the joint paper with Pavel Etingof, Douglas Stryker and Eric Rains. 
\end{rem}

Let $\mc{A}_{\RN}=\{a\in\mc{A}\mid a=\ovl{a}\}$.  We have $\mc{A}_{\RN}\cap \mc{A}_0=\RN[ih]$. It is easy to prove that $(\cdot,\cdot)$ is Hermitian if and only if $T$ is real on $\RN[ih]$.

From now on we suppose that $T$ is real on $\RN[ih]$.

Suppose that $a\in\RN$. Denote $\Re_a S(x)=\frac{1}{2}(S(x)+\ovl{S}(2a-x))$, $\Im_a S(x)=\frac{1}{2i}(S(x)-\ovl{S}(2a-x))$. The polynomial $\Re_a S(x)$ is the unique polynomial such that $\Re_a S(x)=\Re(S(x))$ when $\Im x=a$. Similarly, $\Im_a S(x)$ is the unique polynomial such that $\Im_a S(x)=\Im(S(x))$ when $\Im x=a$. It follows that $P(x)=\Re_0 P(x)$.
\begin{lem}
\label{LemSpaceOnWhichTraceActs}
Suppose that $\phi\colon\CN[x]\to \CN$ is a linear map such that $\phi|_{\RN[ix]}$ is real. Then $\phi$ is zero on $\{S(x+1)P(x+1)-S(x-1)P(x-1)\mid S(x)\in\CN[x]\}$ if and only if $\phi$ is zero on $\{S(x+1)P(x+1)-S(x-1)P(x-1)\mid \Re_{0}S(x)=0\}$. Moreover, when $\Re_{0}S(x)=0$ we have $S(x+1)P(x+1)-S(x-1)P(x-1)=-2\Re_0 (S(x-1)P(x-1))$.
\end{lem}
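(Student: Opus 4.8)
The plan is to prove Lemma~\ref{LemSpaceOnWhichTraceActs} in two movements: first establish the ``moreover'' identity, which is a direct algebraic computation, and then use it together with the reality hypothesis on $\phi$ to get the main equivalence. So first I would fix $S(x)\in\CN[x]$ with $\Re_0 S(x)=0$, i.e. $\ovl{S}(-x)=-S(x)$. The key observation is that the antilinear involution underlying $\Re_0,\Im_0$ sends the shift $x\mapsto x+1$ to $x\mapsto -(x-1)=1-x$ together with conjugation, and recall from the text that $P(x)=\Re_0 P(x)$, i.e. $\ovl{P}(-x)=P(x)$. Hence $\ovl{S(x-1)P(x-1)}$ evaluated at $-x$ equals $\ovl{S}(-x-1)\,\ovl{P}(-x-1)=(-S(x+1))(P(x+1))=-S(x+1)P(x+1)$. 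Therefore $\Re_0\bigl(S(x-1)P(x-1)\bigr)=\tfrac12\bigl(S(x-1)P(x-1)-S(x+1)P(x+1)\bigr)$, which rearranges to exactly the claimed identity $S(x+1)P(x+1)-S(x-1)P(x-1)=-2\Re_0\bigl(S(x-1)P(x-1)\bigr)$. I expect this to be routine bookkeeping about where the conjugation-plus-reflection lands.

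Next I would prove the equivalence. One direction is trivial: if $\phi$ vanishes on the whole space $\{S(x+1)P(x+1)-S(x-1)P(x-1)\mid S\in\CN[x]\}$ then in particular it vanishes on the subset where $\Re_0 S=0$. For the converse, suppose $\phi$ kills every $S(x+1)P(x+1)-S(x-1)P(x-1)$ with $\Re_0 S(x)=0$, and let $T(x)$ be an arbitrary polynomial. Decompose $T=\Re_0 T + i\,\Im_0 T$; the point is that $\Re_0 T$ and $\Im_0 T$ both lie in $\RN[ix]$ (they are the ``real part'' and ``imaginary part'' along the line $\Im x = 0$), and more precisely $i\,\Im_0 T$ is a polynomial $S$ with $\Re_0 S = 0$. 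So it suffices to handle $T$ with $\Re_0 T = T$, i.e. $\ovl{T}(-x)=T(x)$, a ``real'' polynomial. For such $T$, the element $T(x+1)P(x+1)-T(x-1)P(x-1)$ is, by the same reflection argument as above, \emph{anti}-fixed by the involution: applying $\ovl{\;\cdot\;}$ and $x\mapsto -x$ sends it to its negative. Hence it lies in $i\,\RN[ix]$, the purely imaginary part. But $\phi$ is real on $\RN[ix]$, which forces $\phi$ to be purely imaginary on $i\,\RN[ix]$... and here I need to be a little careful: what I actually want is that $\phi$ of this element is determined by, or controlled by, values of $\phi$ on the $\Re_0 S = 0$ piece.

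The cleaner way to organize the converse: given arbitrary $S$, write $S = \Re_0 S + i\,\Im_0 S$ and set $S_1 = i\,\Im_0 S$, so $\Re_0 S_1 = 0$ and $S - S_1 = \Re_0 S =: S_0$ satisfies $\ovl{S_0}(-x) = S_0(x)$. Then
\[
S(x+1)P(x+1)-S(x-1)P(x-1) = \bigl[S_0(x+1)P(x+1)-S_0(x-1)P(x-1)\bigr] + \bigl[S_1(x+1)P(x+1)-S_1(x-1)P(x-1)\bigr].
\]
The second bracket is killed by $\phi$ by hypothesis. For the first bracket, the reflection argument shows it equals $i$ times a polynomial in $\RN[ix]$ — indeed it is anti-invariant under conjugation-plus-reflection, hence of the form $i g$ with $g\in\RN[ix]$ — wait, I should double check: anti-invariant under $x\mapsto -x$ composed with conjugation means it lies in the $-1$ eigenspace, which is $i\RN[ix]$, correct. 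Since $\phi$ is real on $\RN[ix]$, is $\phi$ necessarily zero on $i\RN[ix]$? No — real on $\RN[ix]$ only says $\phi(\RN[ix])\subset\RN$, which gives $\phi(i\RN[ix])\subset i\RN$, not zero. So this shows $\phi$ of the first bracket is purely imaginary, not zero. That's not quite enough, so \textbf{the main obstacle} is precisely this last step: I must show the first bracket $S_0(x+1)P(x+1)-S_0(x-1)P(x-1)$ with $S_0$ ``real'' is \emph{itself} already expressible as $S'(x+1)P(x+1)-S'(x-1)P(x-1)$ for some $S'$ with $\Re_0 S' = 0$, i.e. modify $S_0$ to land in the desired subspace without changing the coset. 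I expect this works because $S_0 \mapsto S_0(x+1)P(x+1)-S_0(x-1)P(x-1)$ and $S_1\mapsto S_1(x+1)P(x+1)-S_1(x-1)P(x-1)$ have overlapping images controlled by low-degree corrections — or, more likely, the intended reading is subtler and one uses the ``moreover'' identity to rewrite the $S_0$-contribution as $-2\Re_0(\cdots)$ and observes $\phi$ of that is real while also, by anti-invariance, imaginary, hence zero. Getting this bookkeeping exactly right — reconciling ``real'' and ``imaginary'' to conclude ``zero'' — is where I'd spend the real effort; everything else is formal manipulation of the two commuting involutions $\ovl{\;\cdot\;}$ and $x\mapsto -x$ on $\CN[x]$.
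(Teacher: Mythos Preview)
Your proof of the ``moreover'' identity is correct, and the trivial direction of the equivalence is fine. The gap is exactly where you flagged it: handling the contribution from $S_0 = \Re_0 S$. You correctly observe that $m_{S_0}(x) := S_0(x+1)P(x+1)-S_0(x-1)P(x-1)$ lies in $i\RN[ix]$, so $\phi(m_{S_0})\in i\RN$; but you never obtain a second constraint forcing this to be real, and your two proposed fixes do not work. The ``moreover'' identity only applies to polynomials with $\Re_0 S = 0$, which $S_0$ does not satisfy, and there is no mechanism by which the images of the ``real'' and ``imaginary'' halves of $S$ overlap via low-degree corrections.

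The missing step is a one-line observation you are circling but never land on: by $\CN$-linearity of $S\mapsto m_S$ one has $m_{S_0} = i\, m_{-iS_0}$, and since $S_0\in\RN[ix]$ the polynomial $-iS_0$ satisfies $\Re_0(-iS_0)=0$. Thus $m_{-iS_0}$ lies in the target set, $\phi(m_{-iS_0})=0$ by hypothesis, and $\phi(m_{S_0})=i\cdot 0=0$. This is essentially the paper's argument seen from a different angle: the paper takes $\Re_0$ and $\Im_0$ of the \emph{element} $m_S$ (rather than of $S$), computes $\Re_0 m_S = m_{i\Im_0 S}$ and $\Im_0 m_S = m_{-i\Re_0 S}$, notes both are of the form $m_T$ with $\Re_0 T = 0$, and concludes via the general fact that $\phi(M)=\{0\}\Leftrightarrow\phi(\Re_0 M\cup\Im_0 M)=\{0\}$ for $\phi$ real on $\RN[ix]$. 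Decomposing the output rather than the input is what sidesteps your obstacle entirely.
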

\begin{proof}
Let $M$ be any subset of $\CN[x]$. Since $\phi|_{\RN[ix]}$ is real, $\phi(M)=\{0\}$ if and only if $\phi(\Re_0 M\cup \Im_0 M)=\{0\}$. It is  easy to see that for \[M=\{S(x+1)P(x+1)-S(x-1)P(x-1)\mid S(x)\in \CN[x])\] we have \[\Re_0 M\cup \Im_0 M=\{S(x+1)P(x+1)-S(x-1)P(x-1)\mid \Re_{0}S(x)=0\}.\]
The second statement is straightforward.
\end{proof}
Denote by $L$ the space $\RN[ix]/\{\Re_0(S(x-1)P(x-1))\mid \Re_{0}S(x)=0\}$.

\begin{cor*}
Invariant Hermitian forms on $\mc{A}$ are in one-to-one correspondence with linear maps $f\colon L\to\RN$.
\end{cor*}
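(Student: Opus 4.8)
The plan is to combine the previous corollary (invariant Hermitian forms $\leftrightarrow$ traces) with Lemma~\ref{LemSpaceOnWhichTraceActs}, which tells us exactly which traces are realized by \emph{Hermitian} forms. First I would recall that by the Proposition an invariant Hermitian form $(\cdot,\cdot)$ corresponds to a trace $T\colon\mc{A}\to\CN$ via $T(a)=(a,1)$ and $(a,b)=T(a\ovl{b})$; since $\mc{A}_{\RN}\cap\mc{A}_0=\RN[ih]$, the form is Hermitian exactly when $T|_{\RN[ih]}$ is real, and such a $T$ is determined by its restriction to $\mc{A}_0\cong\CN[h]$, i.e. by a linear map $\phi\colon\CN[x]\to\CN$ with $\phi|_{\RN[ix]}$ real. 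By the Proposition (condition (2)), $\phi$ must annihilate the subspace $\{S(x+1)P(x+1)-S(x-1)P(x-1)\mid S\in\CN[x]\}$.

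Next I would invoke Lemma~\ref{LemSpaceOnWhichTraceActs}: for such a $\phi$, vanishing on that whole subspace is equivalent to vanishing on the smaller subspace obtained by restricting to $S$ with $\Re_0 S=0$, and on that subspace $S(x+1)P(x+1)-S(x-1)P(x-1)=-2\Re_0(S(x-1)P(x-1))$. Therefore the condition on $\phi$ is precisely that $\phi$ kills $\{\Re_0(S(x-1)P(x-1))\mid \Re_0 S(x)=0\}$. Since $\phi|_{\RN[ix]}$ is real and determined by its restriction there (a real-linear functional on $\RN[ix]$ extends uniquely to a complex-linear functional on $\CN[x]=\RN[ix]\oplus i\RN[ix]$ that is real on $\RN[ix]$, by setting $\phi(i\cdot) $ appropriately — this is exactly the $\Re_0/\Im_0$ decomposition used in the lemma's proof), giving $\phi$ is the same as giving a real-linear functional on $\RN[ix]$ that vanishes on the image of the subspace $\{\Re_0(S(x-1)P(x-1))\mid \Re_0 S=0\}$ inside $\RN[ix]$. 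That image is exactly the denominator in the definition of $L=\RN[ix]/\{\Re_0(S(x-1)P(x-1))\mid \Re_0 S(x)=0\}$, so such functionals are precisely the elements of $L^*=\Hom_\RN(L,\RN)$.

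Chaining these identifications — invariant Hermitian form $\leftrightarrow$ real-on-$\RN[ih]$ trace $\leftrightarrow$ functional $\phi$ on $\CN[x]$ real on $\RN[ix]$ killing the relevant relations $\leftrightarrow$ real functional on $\RN[ix]$ killing the denominator $\leftrightarrow$ element of $L^*$ — yields the claimed bijection. The only point requiring a little care is checking that the subspace $\{\Re_0(S(x-1)P(x-1))\mid \Re_0 S=0\}$ indeed lands in $\RN[ix]$ (so that the quotient $L$ makes sense and the functionals match up), which is immediate since $\Re_0$ of anything lies in $\RN[ix]$ by definition of $\Re_0$; and that the correspondence is linear and bijective in both directions, which is formal once the spaces are matched. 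I expect no genuine obstacle here — this is a bookkeeping corollary assembling the Proposition and Lemma~\ref{LemSpaceOnWhichTraceActs}; the substantive content was already in the lemma.
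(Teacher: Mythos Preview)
Your proposal is correct and follows exactly the route the paper intends: the corollary is stated in the paper without proof, immediately after Lemma~\ref{LemSpaceOnWhichTraceActs} and the definition of $L$, because it is meant to be the straightforward combination of the Proposition (forms $\leftrightarrow$ traces real on $\RN[ih]$) with that lemma. Your write-up simply makes explicit the bookkeeping the paper leaves to the reader.
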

%We deduce from lemma~\ref{LemSpaceOnWhichTraceActs} that 

Denote by $C_0$ the convex cone in $\RN[ix]$ generated by $a\ovl{a}\in \mc{A}_0\cap \mc{A}_{\Re}$, where $a$ is a nonzero homogeneous element of $\mc{A}$. Denote by $C$ the image of $C_0$ in $L$. The following proposition is straightforward.
\begin{prop*}
The form $(\cdot,\cdot)$ is positive definite if and only if $f(C)\subset \RN_{>0}$. The form $(\cdot,\cdot)$ is positive semidefinite if $f(C)\subset \RN_{\geq 0}$. 
\end{prop*}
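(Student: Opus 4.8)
The plan is to unwind the chain of identifications assembled above, reducing positivity of $(\cdot,\cdot)$ to a positivity statement for the single functional $f$ on the cone $C$.

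First I would reduce to homogeneous elements. Since any invariant Hermitian form kills pairs of $\ad h$-weight vectors of different weight, the weight decomposition $\mc{A}=\bigoplus_{k\in\ZN}\mc{A}_{2k}$ is orthogonal for $(\cdot,\cdot)$. Hence $(\cdot,\cdot)$ is positive definite (resp.\ positive semidefinite) if and only if its restriction to each $\mc{A}_{2k}$ is, which amounts to saying $(a,a)>0$ for every nonzero homogeneous $a$ (resp.\ $(a,a)\geq 0$ for every homogeneous $a$).

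Next I would rewrite $(a,a)$ in terms of $f$. For homogeneous $a$ we have $a\ovl a\in\mc{A}_0$ and $(a,a)=T(a\ovl a)$, a real number since the form is Hermitian; using the trace property and $T(\ovl c)=\ovl{T(c)}$ on $\mc{A}_0$ (which follows from $T$ being real on $\RN[ih]$), this equals $T(\Re_0(a\ovl a))$, where $\Re_0(a\ovl a)$ is the bar-invariant representative lying in $\mc{A}_0\cap\mc{A}_{\RN}=\RN[ih]$. Transporting along $\RN[ih]\cong\RN[ix]$ and Lemma~\ref{LemSpaceOnWhichTraceActs}, $T$ induces the functional $f\colon L\to\RN$ attached to $(\cdot,\cdot)$ by the preceding corollary, so that $(a,a)=f(c_a)$ with $c_a\in L$ the class of $\Re_0(a\ovl a)$. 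By construction, $C$ is precisely the convex cone generated by the $c_a$ as $a$ runs over nonzero homogeneous elements.

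Finally I would put the pieces together. If $f$ is strictly positive on $C\setminus\{0\}$, then for nonzero homogeneous $a$ we have $c_a\neq 0$ (else $(a,a)=0$) and $(a,a)=f(c_a)>0$, so $(\cdot,\cdot)$ is positive definite by the first step; conversely, positive definiteness forces $f(c_a)=(a,a)>0$ and $c_a\neq 0$ for all such $a$, and since every nonzero element of $C$ is a nonnegative combination $\sum_j t_j c_{a_j}$ in which at least one $t_j>0$ meets a nonzero $c_{a_j}$, linearity of $f$ makes $f>0$ on $C\setminus\{0\}$. For semidefiniteness one only needs the easy direction: $f(C)\subset\RN_{\geq 0}$ yields $(a,a)=f(c_a)\geq 0$ for all homogeneous $a$, hence positive semidefiniteness. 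There is no real obstacle here — the proposition is ``straightforward'' because the substance already sits in the earlier lemmas; the only care needed is the bar-involution bookkeeping when passing from $a\ovl a$ to $\Re_0(a\ovl a)$, and the elementary fact that a linear functional is strictly positive on a cone minus the origin if and only if it is so on a generating set minus the origin.
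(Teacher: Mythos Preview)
Your unwinding is correct and is exactly what the paper has in mind: the paper offers no argument beyond the word ``straightforward,'' and your reduction via the orthogonal $\ad h$-weight decomposition together with the identification $(a,a)=T(a\ovl a)=f(c_a)$ is the intended content. One small wobble: in the direction ``$f$ positive on $C\setminus\{0\}\Rightarrow$ positive definite'' your parenthetical ``(else $(a,a)=0$)'' does not by itself rule out $c_a=0$; the clean fix is to use the hypothesis literally as stated, $f(C)\subset\RN_{>0}$, which already forces $0\notin C$ and hence $c_a\neq 0$ for every nonzero homogeneous $a$.
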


\begin{cor}
\label{CorConnectionCandUnitary}
\begin{enumerate}
\item
There exists a nonzero  positive semidefinite invariant form on $\mc{A}$ if and only if $0\in\partial C$.
\item
If $0\in C$ then $\mc{A}$ is not unitarizable.
\item
If $0\notin C$ then there exists an invariant positive semidefinite form on $\mc{A}$.
\end{enumerate}
\end{cor}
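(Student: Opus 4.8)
The plan is to reduce the whole statement to elementary convex geometry of the cone $C\subseteq L$, via the corollary and proposition immediately above: invariant Hermitian forms on $\mc{A}$ correspond bijectively to linear functionals $f\colon L\to\RN$, and such a form is positive definite exactly when $f(C)\subseteq\RN_{>0}$, and positive semidefinite when $f(C)\subseteq\RN_{\geq 0}$. First I would note two supplementary facts. The converse of the last implication also holds: if the form attached to $f$ is positive semidefinite, then $f([a\ovl{a}])=(a,a)\geq 0$ for every nonzero homogeneous $a\in\mc{A}$, and the classes $[a\ovl{a}]$ generate $C$, so $f(C)\subseteq\RN_{\geq 0}$. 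Also, a \emph{nonzero} functional gives a \emph{nonzero} form: if $f\ne 0$ the associated trace $T$ is nonzero on $\RN[ih]$, so $(S(h),1)=T(S(h))\ne 0$ for some polynomial $S$.

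Next I would record two observations about $C$. First, $0\in\ovl{C}$ always, since any generator $a\ovl{a}$ lies in $C$ with all of its positive multiples and $\lambda\,a\ovl{a}\to 0$ as $\lambda\to 0^{+}$. Second, because $C$ is a convex cone, $0\in\operatorname{int}(C)$ forces $C=L$ (a ball about the origin inside a cone exhausts $L$), and conversely $C=L$ gives $0\in\operatorname{int}(C)$. Hence $0\in\partial C$ if and only if $C\ne L$.

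Granting all this, the three parts follow quickly. For (2): if $\mc{A}$ is unitarizable there is an $f$ with $f(C)\subseteq\RN_{>0}$, but $0\in C$ gives $f(0)=0\in f(C)$, a contradiction. For (1), ``$\Leftarrow$'': if $0\in\partial C$ then $0\in\ovl{C}$ while $0\notin\operatorname{int}(C)=\operatorname{int}(\ovl{C})$, so $0$ is a boundary point of the closed convex set $\ovl{C}$; the supporting hyperplane theorem yields a nonzero $f$ with $f(\ovl{C})\subseteq\RN_{\geq 0}$, and the corresponding form is nonzero and positive semidefinite. For (1), ``$\Rightarrow$'': a nonzero positive semidefinite invariant form gives, by the supplementary facts, a nonzero $f$ with $f(C)\subseteq\RN_{\geq 0}$; then $C\subseteq\{f\geq 0\}\subsetneq L$, so $C\ne L$, hence $0\in\partial C$. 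For (3) (understood as asking for a nonzero such form): if $0\notin C$ then $C\ne L$ since $0\in L$, so $0\in\partial C$ and (1) produces the form.

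The argument is short and has no single hard step; the only things needing care are that $C$ need not be closed---one works with $\ovl{C}$, using $\operatorname{int}(\ovl{C})=\operatorname{int}(C)$ for convex sets and $0\in\ovl{C}$---and the small bookkeeping that the functionals built are nonzero, hence the forms genuinely nonzero. Both become routine once the dictionary ``forms $\leftrightarrow$ functionals, positivity $\leftrightarrow$ sign conditions on $f(C)$'' is in place.
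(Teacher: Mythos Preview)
Your argument is correct and follows essentially the same approach as the paper, which simply cites the supporting hyperplane theorem for (1), says (3) follows, and calls (2) straightforward. You have filled in the details the paper omits---that $0\in\ovl{C}$ always, that $0\in\operatorname{int}(C)$ iff $C=L$, and the care with closures and nonzero functionals---but the skeleton is the same.
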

\begin{proof}
The first statement follows from the supporting hyperplane theorem. The third statement follows. The second statement is straightforward.
\end{proof}

\begin{prop}
\label{StatHowCLooks}
\begin{enumerate}
\item
For any $a\in\mc{A}_{2k}$, $k\in\ZN$ there exists $b\in\mc{A}_0\cup\mc{A}_2$ such that for any invariant form $(\cdot,\cdot)$ on $\mc{A}$ we have $(a,a)=(b,b)$.
\item
$C$ is the image of \[\Bigl\{\ovl{R_1}(-x)R_1(x)-\Re_0 \ovl{R}_2(1-x)R_2(x-1)P(x-1)\mid (R_1,R_2)\in \CN[x]^2\setminus\{(0,0)\}\Bigr\}\] in $L$.
\end{enumerate}
\end{prop}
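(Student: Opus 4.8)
The plan is to establish part (1) first and then to deduce part (2) from it together with the classical factorization of nonnegative one-variable polynomials.

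For part (1), recall that invariant forms correspond to linear functionals $f\colon L\to\RN$ (the Corollary above), with $(a,a)$ equal to the value of $f$ on the image of $\Re_0(a\ovl a)$ in $L$; thus ``$(a,a)=(b,b)$ for every invariant form'' means precisely that $\Re_0(a\ovl a)$ and $\Re_0(b\ovl b)$ have the same image in $L$. Write a homogeneous element of $\mc{A}_{2k}$ as $a=e^{k}S(h)$ with $S\in\CN[x]$ when $k\ge 0$; the case $k<0$ reduces to this one on passing to $\ovl a\in\mc{A}_{-2k}$, since any invariant form has the shape $(a,b)=T(a\ovl b)$ for a trace $T$, whence $(\ovl a,\ovl a)=T(\ovl a a)=T(a\ovl a)=(a,a)$. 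Writing $S_{1}(x)=P(x-1)S(x-2)$, the heart of the argument is the identity
\[
\bigl(e^{k}S(h),\,e^{k}S(h)\bigr)=\bigl(e^{k-2}S_{1}(h),\,e^{k-2}S_{1}(h)\bigr),\qquad k\ge 2,
\]
valid for every invariant form. To prove it, apply the invariance relation $(eu,v)=-(u,vf)$ with $u=e^{k-1}S(h)$ and $v=e^{k}S(h)$, using $e^{k}S(h)\,f=e^{k-1}S_{1}(h)$ (which follows from $S(h)f=fS(h-2)$ and $ef=P(h-1)$); this gives $\bigl(e^{k}S(h),e^{k}S(h)\bigr)=-\bigl(e^{k-1}S(h),e^{k-1}S_{1}(h)\bigr)$. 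Then apply the relation $(uf,v)=-(u,ev)$, reading the second argument as $e\cdot\bigl(e^{k-2}S_{1}(h)\bigr)$; this removes the last power of $e$ and restores the sign. Iterating the identity $\lfloor k/2\rfloor$ times lands in $\mc{A}_0$ when $k$ is even and in $\mc{A}_2$ when $k$ is odd, producing the required $b$.

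For part (2), first record the two basic computations of $\Re_0(b\ovl b)$. For $b=R_1(h)\in\mc{A}_0$ the element $b\ovl b=R_1(h)\ovl{R_1}(-h)$ already lies in $\RN[ix]$, so $\Re_0(b\ovl b)=\ovl{R_1}(-x)R_1(x)$. For $b=eR(h)\in\mc{A}_2$, using $R(h)f=fR(h-2)$ and $ef=P(h-1)$ one gets $b\ovl b=-P(h-1)R(h-2)\ovl R(-h)$, which after the substitution $R_2(x)=R(x-1)$ becomes $\Re_0(b\ovl b)=-\Re_0\bigl(\ovl{R_2}(1-x)R_2(x-1)P(x-1)\bigr)$. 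By part (1), every generator $\Re_0(a\ovl a)$ of $C_0$ has the same image in $L$ as one of these, so $C$ is the image in $L$ of the convex cone generated by the polynomials $\ovl{R_1}(-x)R_1(x)$ and $-\Re_0\bigl(\ovl{R_2}(1-x)R_2(x-1)P(x-1)\bigr)$. It remains to identify this cone with the set displayed in the statement. One inclusion is immediate, since any element of the displayed set is visibly $\Re_0(b_1\ovl{b_1})+\Re_0(b_2\ovl{b_2})$ with $b_1\in\mc{A}_0$ and $b_2\in\mc{A}_2$, hence lies in $C_0$. For the reverse inclusion one checks that a positive combination of generators collapses to a single term of each kind: a finite sum $\sum_i\ovl{R_1^{(i)}}(-x)R_1^{(i)}(x)$ is a polynomial nonnegative on the imaginary axis (each summand is $|R_1^{(i)}|^{2}$ there), hence equals $\ovl{R_1}(-x)R_1(x)$ for a single $R_1$; likewise $\sum_i R^{(i)}(x-2)\ovl{R^{(i)}}(-x)$ is nonnegative on the line $\Re x=1$, hence equals $R(x-2)\ovl R(-x)$ for a single $R$. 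Both factorizations are the classical fact that a one-variable polynomial nonnegative on $\RN$ is a sum of two real squares, applied after an affine substitution carrying the relevant line to $\RN$. Comparing images in $L$ finishes part (2).

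The main obstacle is part (1): one has to discover the precise two-step manipulation that simultaneously removes two powers of $e$ and restores the plus sign, so that the reduction stays inside the cone of norms rather than drifting into general invariant sesquilinear pairings; the reduction necessarily drops the degree by four, which is exactly what forces the two alternatives $\mc{A}_0$ and $\mc{A}_2$. Once the identity is available, part (2) is routine, the only real care being to keep track of the two affine lines $i\RN$ and $\{\Re x=1\}$ that enter the factorization step.
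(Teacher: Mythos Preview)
Your proof is correct and follows essentially the same approach as the paper. For Part~(1) the paper writes a weight-$4k$ element symmetrically as $a=e^{k}S(h)e^{k}$ and records the one-line identity $(a,a)=\bigl(S(h)e^{k}(-f)^{k},\,S(h)e^{k}(-f)^{k}\bigr)$, whereas you keep all $e$'s on the left and carry out the same reduction as an explicit two-step induction $e^{k}S(h)\rightsquigarrow e^{k-2}S_{1}(h)$; unwinding your recursion gives exactly the paper's $b$. Part~(2) is the same computation and the same nonnegative-polynomial factorization in both.
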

\begin{proof}
\begin{enumerate}
\item
Suppose that $a$ has weight $4k>0$. Then $a=e^kS(h)e^k$ for some $S \in\CN[x]$. Hence
\[(a,a)=\left(e^k S(h) e^k, e^k S(h) e^k\right)=\Bigl(S(h)e^k (-f)^k, S(h)e^k (-f)^k\Bigr).\]

Suppose that $a$ has weight $4k+2$, $k\geq 0$. Then $a=e^k S(h)e^{k+1}$ for some $S(x)\in\CN[x]$. Hence

\[(a,a)=\left(e^k S(h) e^{k+1},e^k S(h) e^{k+1}\right)=\Bigl(S(h)e^{k+1}(-f)^k,S(h)e^{k+1}(-f)^k\Bigr).\]
The case of negative degree is done similarly.
\item
By definition $C_0$ is generated by images of $a\ovl{a}$ in $L$ for all nonzero homogeneous $a\in \mc{A}$. We take $b\in \mc{A}_0 \cup \mc{A}_2$ such that $(a,a)=(b,b)$ for all invariant forms. This means that the images of $a\ovl{a}$ and $b\ovl{b}$ in $L$ coincide. It follows that $C$ is generated by the images of $a\ovl{a}$ in $L$ for all $a\in\mc{A}_0\cup\mc{A}_2$.

Take $a=R_1(h)$.  We have \[(a,a)=(R_1(h),R_1(h))=(R_1(h)\ovl{R_1}(-h),1).\]
Take $a=R_2(h-1)e\in\mc{A}_2$. We have
\begin{multline*}
(a,a)=(R_2(h-1)e,R_2(h-1)e)=\\
-\frac{1}{2}\Bigl(\ovl{R_2}(-1-h)fR_2(h-1)e,1\Bigr)+\Bigl(R_2(h-1)e\ovl{R_2}(-1-h)f,1\Bigr)=\\
-\frac{1}{2}\Bigl(\ovl{R_2}(-h-1)fe R_2(h-1)+R_2(h-1)ef\ovl{R_2}(-h+1),1\Bigr)=\\
-\Re_0\Bigl(\ovl{R_2}(1-h)R_2(h-1)P(h-1)),1\Bigr).
\end{multline*}

It follows that the image of $a\ovl{a}$ in $L$ equals to the image of $R_1(h)\ovl{R_1}(-h)$ or $-\Re_0\left(\ovl{R_2}(1-h)R_2(h-1)P(h-1)\right)$ in $L$. It is easy to see that a polynomial $S(x)\neq 0$ equals to $R_1(x)\ovl{R_1}(-x)$ for some $R_1\in\CN[x]$ if and only if $S|_{i\RN}\geq 0$. Similarly $S(x)$ can be represented as $R_2(x-1)\ovl{R_2}(1-x)$ if and only if $S|_{i\RN+1}\geq 0$. Hence \[\left\{\ovl{R_1}(-x)R_1(x)-\Re_0 \ovl{R}_2(1-x)R_2(x-1)P(x-1)\mid R_1,R_2 \in\CN[x]^2\setminus\{(0,0)\}\right\}\] is a cone, so its image coincides with $C$.
\end{enumerate}
\end{proof}
%\begin{rem}
%Choosing another isomorphism of $\CN[x]$ and $\mc{A}_n$ shifts argument of $\ovl{R}_2(-x+n)R_2(x)P(x)$. In particular isomorphism $S(x)\mapsto \frac{1}{2}(S(h)e+eS(h))$ gives $\ovl{R}_2(-x+\frac{n}{2})R_2(x+\frac{n}{2})P(x+\frac{n}{2})$ which already has zero $\Im_0$.
%\end{rem}

Now we are able to prove the following
 
\begin{prop}
\label{StatWhenCcontainsZero}
$C$ contains zero if and only if there exists nonzero $F \in\CN[x]$ such that
\begin{enumerate}
\item
$\Re F(x)\geq 0$ when $\Re x=0$.
\item
$\Re F(x-1)P(x-1)\geq 0$ when $\Re x=0$.
\end{enumerate}
\end{prop}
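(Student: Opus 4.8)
The strategy is to unwind the definition of $C$ from Proposition~\ref{StatHowCLooks}(2) together with the description of $L$, turning the condition $0\in C$ into a statement about polynomials nonnegative on the imaginary axis that is visibly equivalent to the two conditions on $F$. First note that $\Re_0 S=0$ holds exactly when $S=iU$ with $U\in\RN[ix]$, so the subspace of $\RN[ix]$ by which we quotient to form $L$ is $\{\Re_0(iU(x-1)P(x-1))\mid U\in\RN[ix]\}$. Combined with Proposition~\ref{StatHowCLooks}(2), this shows that $0\in C$ if and only if there exist $R_1,R_2\in\CN[x]$, not both zero, and $U\in\RN[ix]$ with
\[
\ovl{R_1}(-x)R_1(x)=\Re_0\Bigl(\bigl[\ovl{R_2}(1-x)R_2(x-1)+iU(x-1)\bigr]P(x-1)\Bigr).
\]
Setting $F(x):=\ovl{R_2}(-x)R_2(x)+iU(x)$, the bracket is exactly $F(x-1)$, so the right-hand side is $\Re_0\bigl(F(x-1)P(x-1)\bigr)$.

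Next I would read off the two conditions. Since $\Re_0(iU)=0$ while $\Re_0\bigl(\ovl{R_2}(-x)R_2(x)\bigr)=\ovl{R_2}(-x)R_2(x)$, we have $\Re_0 F=\ovl{R_2}(-x)R_2(x)$, a polynomial in $\RN[ix]$ that is $\geq 0$ on $\{\Re x=0\}$ (there it equals $|R_2|^2$); conversely, by the single-square factorization already used in the proof of Proposition~\ref{StatHowCLooks}, every polynomial in $\RN[ix]$ nonnegative on $\{\Re x=0\}$ arises this way, and one may then take $U=\Im_0 F\in\RN[ix]$. Hence the polynomials $F$ that occur are precisely those with $\Re F(x)\geq 0$ when $\Re x=0$, which is condition~(1). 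On the other side, $\ovl{R_1}(-x)R_1(x)$ ranges over all polynomials in $\RN[ix]$ nonnegative on $\{\Re x=0\}$, and $\Re_0\bigl(F(x-1)P(x-1)\bigr)$ is automatically in $\RN[ix]$; so the displayed identity is solvable for $R_1$ exactly when $\Re_0\bigl(F(x-1)P(x-1)\bigr)\geq 0$ on $\{\Re x=0\}$, that is, $\Re\bigl(F(x-1)P(x-1)\bigr)\geq 0$ when $\Re x=0$ --- condition~(2). Assembling the two correspondences gives both directions of the proposition.

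The one point requiring care --- and the only real obstacle, the conceptual input being the factorization cited above --- is matching the hypothesis that $(R_1,R_2)\neq(0,0)$ with the requirement that $F$ be nonzero. In the direction $0\in C\Rightarrow\exists F$ this is automatic: $F=0$ forces $\ovl{R_2}(-x)R_2(x)=0$, hence $R_2=0$ and $U=0$, hence $\ovl{R_1}(-x)R_1(x)=0$ and $R_1=0$. In the direction $\exists F\Rightarrow 0\in C$, starting from a nonzero $F$ I must exclude the possibility that the $R_2$ obtained from $\Re_0 F$ and the $R_1$ obtained from $\Re_0\bigl(F(x-1)P(x-1)\bigr)$ both vanish; if they did, then $F=iU$ with $U\neq 0$ and $\Re_0\bigl(iU(x-1)P(x-1)\bigr)=0$, which, using $U,P\in\RN[ix]$ (so $\ovl U(x-1)=U(1-x)$ and $\ovl P(x-1)=P(1-x)$), rearranges to $U(x-1)P(x-1)=U(x+1)P(x+1)$; thus $U(x)P(x)$ is invariant under $x\mapsto x+2$, hence constant, which is impossible for $U\neq 0$ since $\deg P\geq 1$. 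With this settled, in that direction the generator $\ovl{R_1}(-x)R_1(x)-\Re_0\bigl(\ovl{R_2}(1-x)R_2(x-1)P(x-1)\bigr)$ equals $\Re_0\bigl(iU(x-1)P(x-1)\bigr)$, which lies in the subspace defining $L$, so it maps to $0$ and $0\in C$.
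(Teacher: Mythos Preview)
Your proof is correct and follows essentially the same route as the paper: rewrite $0\in C$ via Proposition~\ref{StatHowCLooks}(2) and the definition of $L$, then set $F(x)=\ovl{R_2}(-x)R_2(x)+S(x)$ (you write $S=iU$) and use the single-square factorization to translate both sides into positivity conditions on $\Re x=0$. The only difference is that you handle the nonzero condition $(R_1,R_2)\neq(0,0)\leftrightarrow F\neq 0$ more carefully than the paper, which simply asserts the correspondence; your periodicity argument ruling out $R_1=R_2=0$ with $U\neq 0$ is a clean addition.
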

\begin{proof}
It follows from Proposition~\ref{StatHowCLooks} that $C$ does not contain zero if and only if there do not exist $R_1,R_2,S\in \CN[x]$ not all equal to zero such that $\Re_{0}S(x)=0$ and \[\ovl{R_1}(-x)R_1(x)-\Re_0 \ovl{R_2}(1-x)R_2(x-1)P(x-1)=\Re_0 (S(x-1)P(x-1)).\] This can be rewritten as \[\ovl{R_1}(-x)R_1(x)=\Re_0\bigl((S(x-1)+\ovl{R_2}(1-x)R_2(x-1))P(x-1)\bigr).\]

 We see that set $\left\{\ovl{R_1}(-x)R_1(x)\mid R_1(x)\in\CN[x]\right\}$ coincides with $\left\{Q(x)\mid Q|_{i\RN}\geq 0\right\}$. For any $F(x)\in \CN[x]$ we have $(\Re_0 F)(x)=\Re(F(x))$ when $x\in i\RN$. Therefore such $R_1$ exists if and only if \[\Re\Bigl(\left(S(x-1)+\ovl{R_2}(1-x)R_2(x-1)\right)P(x-1)\Bigr)\geq 0\] when $x\in i\RN$.
 
Denote $S(x)+\ovl{R_2}(-x)R_2(x)$ by $F(x)$.  We see that the set \[\Bigl\{S(x)+\ovl{R}(-x)R(x)\mid S(x),R(x)\in \CN[x], \Re_0 S(x)=0\Bigr\}\] coincides with \[\left\{Q(x)\mid \Re (Q(x)|_{i\RN})\geq 0\right \}.\] The statement follows.
\end{proof}
\subsection{Proof of main theorem.}
Denote by  $\rho_{< a}(F(x))$ the number of roots of $F(x)$ with real part less than $a$, similarly for other inequality signs.
\begin{cor}
\label{CorCdoesNotContainZero}
Suppose that one of the following holds:
\begin{enumerate}
\item
 $P(x)$ has at least three roots $\alpha$ with multiplicities with $|\Re\alpha|<1$.
\item
 $P(x)$ has two roots $\alpha$ with multiplicities with $|\Re\alpha|<1$. Denote the degree of $P(x)$ by $2m$. The leading  coefficient of $P(x)$ has sign $(-1)^m$.
\end{enumerate}
Then $C$ does not contain zero.
\end{cor}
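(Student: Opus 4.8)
The plan is to argue by contradiction using Proposition~\ref{StatWhenCcontainsZero}. Assume $0\in C$; then there is a nonzero $F\in\CN[x]$ with $\Re F(w)\ge 0$ on the line $\Re w=0$ and, after the substitution $w=x-1$, with $\Re\bigl(F(w)P(w)\bigr)\ge 0$ on the line $\Re w=-1$. For a nonzero polynomial $Q$ and $c\in\RN$ set $\delta_c(Q)=\rho_{<c}(Q)-\rho_{>c}(Q)$; when $Q$ has no roots on $\{\Re w=c\}$ this is $\tfrac1\pi$ times the variation of $\arg Q$ along that line traversed upward. The analytic lemmas of the appendix give, for nonzero $Q$ with $\Re Q\ge 0$ on $\{\Re w=c\}$, that $|\delta_c(Q)|\le 1$, and moreover $\delta_c(Q)=0$ if $\deg Q$ is even, while $\delta_c(Q)=\operatorname{sign}(a_Q)\,(-1)^{(\deg Q-1)/2}$ if $\deg Q$ is odd, where $a_Q$ denotes the leading coefficient of $Q$. (Morally: $Q$ winds around $0$ by at most $\pi$ along such a line, and for odd degree the leading term fixes the sign of the winding; the delicate point, discussed below, is the case where $Q$ has roots on the line.)

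Next I combine this with three bookkeeping observations. First, $\delta_{-1}(FP)=\delta_{-1}(F)+\delta_{-1}(P)$, since root counts are additive over products. Second, $\delta_{-1}(F)\le\delta_0(F)$: pushing the line leftward from $\Re w=0$ to $\Re w=-1$ only moves roots of $F$ out of the ``$<$'' count and into the ``$>$'' count. Third, since $P(x)\in\RN[ix]$ we have $\ovl P(x)=P(-x)$, so the roots of $P$ are invariant under the reflection $\alpha\mapsto-\ovl\alpha$ across the imaginary axis; in particular $\rho_{<-1}(P)=\rho_{>1}(P)$, and counting the roots in the three strips then gives $\delta_{-1}(P)=-(N+N_1)$, where $N$ is the number of roots $\alpha$ of $P$ (with multiplicity) with $|\Re\alpha|<1$ and $N_1$ the number with $\Re\alpha=1$ (equivalently $\Re\alpha=-1$). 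Putting these together,
\[
-1\ \le\ \delta_{-1}(FP)\ =\ \delta_{-1}(F)+\delta_{-1}(P)\ \le\ \delta_0(F)-(N+N_1)\ \le\ 1-(N+N_1),
\]
whence $N+N_1\le 2$. Under hypothesis (1) this contradicts $N\ge 3$, proving the first case.

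Under hypothesis (2), $N=2$ forces $N_1=0$ and turns every inequality above into an equality; in particular $\delta_0(F)=1$ and $\delta_{-1}(FP)=-1$. Since these are nonzero, $\deg F$ and $\deg FP$ are both odd, and the sign part of the lemma gives $\operatorname{sign}(a_F)\,(-1)^{(\deg F-1)/2}=1$ together with $\operatorname{sign}(a_Fa_P)\,(-1)^{(\deg F+\deg P-1)/2}=-1$, where $a_P$ is the leading coefficient of $P$. Multiplying these two relations and simplifying, using that $\deg F$ is odd and $\deg P=2m$, yields $\operatorname{sign}(a_P)\,(-1)^m=-1$, i.e.\ $\operatorname{sign}(a_P)=(-1)^{m+1}$; this contradicts the hypothesis that the leading coefficient of $P$ has sign $(-1)^m$. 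Hence no such $F$ exists and $0\notin C$.

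The main obstacle is the parenthetical caveat above: the clean bound $|\delta_c(Q)|\le 1$ and its sign refinement are immediate (via the argument variation) only when $Q$ has no zeros on the line $\Re w=c$, whereas $F$ may vanish on $\{\Re w=0\}$ and $FP$ on $\{\Re w=-1\}$ --- precisely the boundary phenomenon that also forces the restriction on roots with real part $1$ in Theorem~\ref{ThrMain}. Replacing $F$ by $F+\eps$ with $\eps>0$ small clears the zeros of $F$ off $\{\Re w=0\}$ without disturbing the first condition, but preserving the second condition under a perturbation --- that is, controlling the zeros of $FP$ on $\{\Re w=-1\}$, especially when $P$ itself has zeros there --- is where the finer analytic lemmas of the appendix are needed, and where the real content of the proof lies.
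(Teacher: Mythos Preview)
Your strategy---contradict Proposition~\ref{StatWhenCcontainsZero} via the root-counting lemma in the appendix---is exactly the paper's. The difference, and the source of the gap you yourself flag, is in how Lemma~\ref{LemWindingNumber} is actually stated and used.

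The lemma does \emph{not} assert $|\delta_c(Q)|\le 1$; it asserts $|\delta_c(Q)|\le k+1$ where $k$ is the number of roots of $Q$ on the line $\Re w=c$ of odd multiplicity, and then records as a corollary the mixed inequalities
\[
\rho_{<c}(Q)\ \le\ \rho_{\ge c}(Q)+1,\qquad \rho_{>c}(Q)\ \le\ \rho_{\le c}(Q)+1,
\]
in which the roots on the line are absorbed into the right-hand side. The paper's proof uses \emph{these} directly, with no perturbation. From $\rho_{<0}(F)\le\rho_{\ge 0}(F)+1$ one obtains, by the trivial inclusions $\rho_{\le -1}(F)\le\rho_{<0}(F)$ and $\rho_{\ge 0}(F)\le\rho_{>-1}(F)$, that $\rho_{\le -1}(F)\le\rho_{>-1}(F)+1$. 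From $\rho_{>-1}(FP)\le\rho_{\le -1}(FP)+1$ one expands the product. Adding the two gives $\rho_{>-1}(P)\le\rho_{\le -1}(P)+2$, which is exactly $N\le 2$ and contradicts case~(1). In case~(2) every inequality in the chain becomes an equality; in particular $\rho_{<0}(F)=\rho_{\ge 0}(F)+1$ forces $\rho_{<0}(F)=\rho_{>0}(F)+k+1$ with $k=\rho_{=0}(F)$, so part~(2) of Lemma~\ref{LemWindingNumber} applies verbatim (and similarly for $FP$ at $c=-1$), yielding the sign contradiction just as you compute.

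So the ``main obstacle'' you describe is not resolved by perturbing $F$---that route is indeed awkward for the second condition---but simply by reading off the $\rho_{\le}/\rho_{\ge}$ form of the lemma and pairing the two applications through the strip $-1\le\Re w\le 0$. Once you replace your $\delta$-inequalities by those, your argument becomes the paper's.
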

\begin{proof}
The number of roots of $P(x)$ with multiplicites that satisfy $|\Re\alpha|<1$ equals to $\rho_{>-1}(P)-\rho_{\geq 1}(P)$. Since $P(x)=\ovl{P}(-x)$ we have $\rho_{\geq 1}(P)=\rho_{\leq -1}(P)$. Therefore the number of roots of $P(x)$ that satisfy $|\Re\alpha|<1$ equals to $\rho_{>-1}(P)-\rho_{\leq -1}(P)$. So in the first case we have $\rho_{>-1}(P(x))\geq \rho_{\leq -1}(P(x))+3$ and in the second case we have $\rho_{>-1}(P(x))\geq \rho_{\leq -1}(P(x))+2$.

If $C$ contains zero then using  Proposition~\ref{StatWhenCcontainsZero} we get $F(x)$ such that $\Re F(x)\geq 0$ when $\Re x=0$, $\Re F(x)P(x)\geq 0$ when $\Re x=-1$. Using Lemma~\ref{LemWindingNumber} in Appendix we see that \[\rho_{<0}(F(x))\leq \rho_{\geq 0}(F(x))+1,\] \[\rho_{>-1}(F(x)P(x))\leq \rho_{\leq -1}(F(x)P(x))+1.\] It follows that \[\rho_{\leq -1}(F(x))\leq \rho_{>-1}(F(x))+1$$ $$\rho_{>-1}(F(x))+\rho_{>-1}(P(x))\leq \rho_{\leq -1}(F(x))+\rho_{\leq -1}(P(x))+1.\] Adding these two inequalities we get \[\rho_{>-1}P(x)\leq \rho_{\leq -1}(P(x))+2.\] In the first case of the corollary we get a contradiction.

Consider the second case. We see that the inequalities on $\rho$ become equalities. Using Lemma~\ref{LemWindingNumber} again we deduce that $F(x)$ has degree $2d-1$, the leading coefficient of $F(x)$ has sign $(-1)^{d-1}$, the leading coefficient of $F(x)P(x)$ has sign $(-1)^{m+d}$. It follows that the leading coefficient of $P(x)$ has sign $(-1)^{m+1}$, a contradiction. 
\end{proof}
Using Corollary~\ref{CorConnectionCandUnitary} we deduce that under these conditions on $P$ the algebra $\mc{A}$ has a positive semidefinite Hermitian form. Hence either $\mc{A}$ is unitarizable or there exists an ideal $I$ in $\mc{A}$ such that $\mc{A}/I$ is unitarizable. This motivates the following two propositions.

\begin{prop}
Every nonzero ideal of $\mc{A}$ has finite codimension.
\end{prop}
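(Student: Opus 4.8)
The plan is to exploit the weight grading $\mc{A}=\bigoplus_{k\in\ZN}\mc{A}_{2k}$, where $\mc{A}_{2k}=\CN[h]e^{k}$ for $k\ge 0$ and $\mc{A}_{2k}=\CN[h]f^{-k}$ for $k\le 0$, together with the fact that $\ad h$ acts on $\mc{A}_{2k}$ as the scalar $2k$. Since a two-sided ideal $I$ satisfies $ha-ah\in I$ for $a\in I$, it is stable under $\ad h$, and a subspace stable under a diagonalizable operator is the direct sum of its intersections with the eigenspaces; hence $I=\bigoplus_k I_{2k}$ with $I_{2k}:=I\cap\mc{A}_{2k}$. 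Each $I_{2k}$ is a left $\CN[h]$-submodule of the free rank-one module $\mc{A}_{2k}$, so either $I_{2k}=0$ or $I_{2k}=p_k(h)\CN[h]e^{k}$ (resp.\ $p_k(h)\CN[h]f^{-k}$) for a nonzero polynomial $p_k$.

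The first real step is to show that $I_0\neq 0$ whenever $I\neq 0$. Pick $k$ with $0\neq a=S(h)e^{k}\in I_{2k}$ (the case $k<0$ is symmetric, multiplying by a power of $e$ instead). Using the identities $fh=(h+2)f$ and $f^{k}e^{k}=\prod_{j=0}^{k-1}P(h+1+2j)$, I get $f^{k}a=S(h+2k)\prod_{j=0}^{k-1}P(h+1+2j)$, which is a nonzero element of $\CN[h]=\mc{A}_0$ contained in $I$. So $I_0=g(h)\CN[h]$ for some nonzero $g$ of degree $d$, and in particular every $I_{2k}$ contains $g(h)\CN[h]e^{k}\neq 0$, so $p_k$ is defined for all $k$.

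Next I would run a gcd argument weight space by weight space. For $k>0$ the ideal $I_{2k}$ contains $I_0e^{k}=g(h)\CN[h]e^{k}$ and also $e^{k}I_0=g(h-2k)\CN[h]e^{k}$ (using $e^{k}S(h)=S(h-2k)e^{k}$), hence $p_k$ divides $\gcd\!\bigl(g(h),g(h-2k)\bigr)$. Consequently $\dim(\mc{A}/I)_{2k}=\deg p_k\le\deg\gcd\!\bigl(g(h),g(h-2k)\bigr)\le d$ for every $k$, and $(\mc{A}/I)_{2k}=0$ as soon as $g(h)$ and $g(h-2k)$ are coprime. But $g(h)$ and $g(h-2k)$ share a root precisely when two roots of $g$ differ by $2k$, and there are only finitely many such $k$; the analogous statement holds for $k<0$ with $g(h+2k)$. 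Therefore only finitely many weight spaces of $\mc{A}/I$ are nonzero, each of dimension at most $d$, so $\dim_{\CN}\mc{A}/I<\infty$, i.e.\ $I$ has finite codimension.

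The main obstacle — modest as it is — is the reduction to $I_0\neq 0$: it relies on the explicit multiplication formulas in $\mc{A}$ (ultimately on $\mc{A}$ being a domain, so that these products of nonzero elements do not vanish), together with the elementary but essential observation that a nonzero polynomial $g$ is coprime to all but finitely many of its shifts $g(x\mp 2k)$. Everything else is bookkeeping with the weight decomposition, and one should take care that the two inclusions $I_0e^k\subset I_{2k}$ and $e^kI_0\subset I_{2k}$ really do hold because $I$ is two-sided.
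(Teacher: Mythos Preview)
Your argument is correct and complete. The weight decomposition of a two-sided ideal, the reduction to $I_0\ne 0$ via $f^{k}a$ (or $e^{m}a$), and the gcd-of-shifts argument all go through exactly as you describe; the only mild care needed is that $I_{2k}$ really is a left $\CN[h]$-submodule, which holds because $\CN[h]\subset\mc{A}_0$ and $\mc{A}_0\cdot I_{2k}\subset I\cap\mc{A}_{2k}$.

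Your route is, however, genuinely different from the paper's. The paper does not compute inside $\mc{A}$ at all: it passes to the associated graded, so that a nonzero two-sided ideal of $\mc{A}$ yields a nonzero Poisson ideal of $\gr\mc{A}\cong\CN[x,y]^{C_n}$, and then uses the symplectic-leaf decomposition of $\CN^2/C_n$ (the two leaves are $\{0\}$ and its complement) to conclude that any proper Poisson-closed subscheme is supported at the origin, hence zero-dimensional. Your argument is more elementary and entirely self-contained --- it uses only the explicit presentation of $\mc{A}$ and the PID structure of $\CN[h]$ --- and it actually gives a bit more, namely an explicit bound $\dim(\mc{A}/I)_{2k}\le\deg g$ and a finite list of $k$ for which the quotient can be nonzero. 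The paper's argument, on the other hand, is conceptual and would work verbatim for any filtered quantization of a Poisson variety with a single zero-dimensional symplectic leaf and no other proper leaves, without touching generators and relations.
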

\begin{proof}
It is enough to prove that every Poisson ideal $I$ of $\CN[x,y]^{C_n}$ has finite codimension. $I$ corresponds to a closed Poisson subscheme $Y$ of $X=\CN^2/C_n$. $X$ has two symplectic leaves: $\{0\}$ and $X\setminus \{0\}$. It follows that $Y$ is supported on $\{0\}$, hence $I$ has finite codimension.
\end{proof}
\begin{prop}
Suppose that $V$ is an irreducible finite-dimensional unitarizable $\mc{A}$-bimodule. Then $\dim V=1$ and there exists $\lambda\in\CN$ such that $hv=\lambda v$, $vh=-\ovl{\lambda}v$, $P(\lambda\pm 1)=0$.
\end{prop}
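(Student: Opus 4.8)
The plan is to extract strong constraints from the \emph{positive-definite} invariant Hermitian form: it will force the left and right multiplications by $h$ to be simultaneously diagonalizable with matching eigenvalues, and this in turn will annihilate the actions of $e$ and $f$ entirely. First I would record the invariance conditions for a form $(\cdot,\cdot)$ on $V$; they are the same six identities as for the regular bimodule, namely $(eu,v)=-(u,vf)$, $(ue,v)=-(u,fv)$, $(hu,v)=-(u,vh)$, $(uh,v)=-(u,hv)$, $(fu,v)=-(u,ve)$, $(uf,v)=-(u,ev)$ for all $u,v\in V$. Writing $L_a$ and $R_a$ for left and right multiplication by $a\in\mc{A}$ (so that $[L_a,R_b]=0$ always), these give in particular $L_h^{*}=-R_h$, hence $R_h^{*}=-L_h$, as well as $R_e^{*}=-L_f$ and $R_f^{*}=-L_e$.

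The key step: $L_h-R_h$ is self-adjoint and $L_h+R_h$ is skew-adjoint with respect to the positive-definite form, so both are diagonalizable; since $L_h$ and $R_h$ commute, all of $L_h,R_h,L_h\pm R_h$ are simultaneously diagonalizable. Decompose $V=\bigoplus V_{\mu,\nu}$ into joint eigenspaces ($L_h=\mu$, $R_h=\nu$ on $V_{\mu,\nu}$). On a nonzero $V_{\mu,\nu}$ the eigenvalue $\mu-\nu$ of $L_h-R_h$ is real and the eigenvalue $\mu+\nu$ of $L_h+R_h$ is purely imaginary, which forces $\nu=-\ovl{\mu}$. Now use $he=eh+2e$: it gives $[L_h,L_e]=2L_e$ while $[R_h,L_e]=0$, so $L_e$ carries $V_{\mu,-\ovl\mu}$ into the joint eigenspace with $L_h$-eigenvalue $\mu+2$ and $R_h$-eigenvalue $-\ovl\mu$; but $-\ovl\mu\neq-\ovl{\mu+2}$, so that eigenspace vanishes and $L_e=0$. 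The identical argument with $hf=fh-2f$ gives $L_f=0$, and then $R_e^{*}=-L_f=0$ and $R_f^{*}=-L_e=0$ give $R_e=R_f=0$.

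To finish: the $\mc{A}$-bimodule (i.e.\ $\mc{A}\otimes\mc{A}^{opp}$-module) structure on $V$ now factors through the commutative algebra generated by $L_h$ and $R_h$, so an irreducible $V$ must be one-dimensional, say $V=\CN v$ with $hv=\lambda v$ and $vh=-\ovl\lambda v$; and evaluating $ef=P(h-1)$, $fe=P(h+1)$ via the left action on $v$ gives $P(\lambda-1)=0$ and $P(\lambda+1)=0$. I do not expect a genuine obstacle here: the only point needing care is the simultaneous diagonalization, which is precisely where positive-definiteness of the form (not merely its existence) is used, via the spectral theorem for commuting normal operators; everything after that is formal bookkeeping with the forced relation $\nu=-\ovl\mu$.
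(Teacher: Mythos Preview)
Your argument is correct and in fact slightly more self-contained than the paper's. The paper first invokes the double centralizer theorem to write $V\cong U\otimes W^{*}$ with $U,W$ irreducible $\mc{A}$-modules, then imports from the earlier classification that $h$ acts diagonalizably on $U$ and $W$ with one-dimensional weight spaces; the invariance identity $(hv,v)=-(v,vh)$ on a biweight vector then pins the left weight to the negative conjugate of the right weight, and because the biweights fill out the full grid $S_U\times S_W$ this collapses both spectra to singletons. You instead get diagonalizability of $L_h$ and $R_h$ directly from the spectral theorem (via $L_h-R_h$ self-adjoint, $L_h+R_h$ skew-adjoint, commuting), and then kill $L_e,L_f,R_e,R_f$ by the weight-shift mismatch $-\ovl{\mu}\neq -\ovl{\mu+2}$. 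This sidesteps both the tensor decomposition and the module-theoretic input, at the cost of appealing to the finite-dimensional spectral theorem for commuting normal operators; the paper's route, by contrast, makes transparent where one-dimensionality of the weight spaces is used. Either way the endgame is identical: once only $L_h,R_h$ survive, irreducibility forces $\dim V=1$ and the relations $ef=P(h-1)$, $fe=P(h+1)$ give $P(\lambda\pm1)=0$.
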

\begin{proof}
Using the double centralizer theorem we see that $V=U\otimes W^*$, where $U,W$ are irreducible $\mc{A}$-modules. The operator $h$ has an eigenvector on a finite-dimensional vector space. Reasoning as in subsection~\ref{SubSecIrredModules} we see that $h$ acts on $U,W$ diagonalizably with one-dimensional eigenspaces. Denote by $S_U$, $S_W$ the sets of eigenvalues of $h$ on $U,W$. Hence there is a bigrading on $V$ by left and right action of $h$ and the set of weights equals to $S_U\times S_W$.

Suppose that $v$ is a nonzero homogeneous element of weight $(\lambda,\mu)$. Since $(\cdot,\cdot)$ is invariant we have $(hv,v)=-(v,vh)$. Since $(v,v)>0$ we have $\lambda=-\ovl{\mu}$. It follows that for any element of $S_U$ there exists exactly one element of $S_W$ and vice versa. Hence $|S_U|=|S_W|=1$.

So there exists $\lambda\in\CN$ such that $hv=\lambda v$, $vh=-\ovl{\lambda}v$. Since $efv=fev=0$ we get $P(\lambda+1)=P(\lambda-1)=0$.
\end{proof}

\begin{lem}
\label{LemConeInjection}
Suppose that $P_1$ and $P_2$ are two polynomials in $\RN[ix]$ such that $\frac{P_1}{P_2}$ is a polynomial that is nonnegative on the set $\Re x=0$. Denote by $\mc{A}_1$ and $\mc{A}_2$ the corresponding algebras. Then there is a natural injective map from the cone of positive definite forms on $\mc{A}_2$ to the cone of positive definite forms on $\mc{A}_1$. In particular if the regular bimodule corresponding to $P_2$ is unitarizable then the regular bimodule corresponding to $P_1$ is also unitarizable.
\end{lem}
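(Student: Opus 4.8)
The plan is to exhibit the injection explicitly on the level of traces, using the correspondence between invariant Hermitian forms and traces established earlier. Recall that invariant Hermitian forms on $\mc{A}_i$ correspond to linear functionals on $L_i = \RN[ix]/\{\Re_0(S(x-1)P_i(x-1))\mid \Re_0 S(x)=0\}$, and that positivity of the form is equivalent to positivity of the functional on the cone $C_i \subset L_i$ described in Proposition~\ref{StatHowCLooks}. So it suffices to construct a linear map $L_1 \to L_2$ (or a suitable map going the right way) that sends $C_1$ into $C_2$; dualizing then sends positive functionals on $L_2$ to positive functionals on $L_1$, and injectivity will follow from surjectivity of the map on the underlying polynomial spaces.

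First I would write $P_1 = G \cdot P_2$ where $G = P_1/P_2 \in \CN[x]$ is nonnegative on $\Re x = 0$; since $P_1, P_2 \in \RN[ix]$ we have $G \in \RN[ix]$ as well, and $G$ being nonnegative on the imaginary axis means $G(x) = \ovl{R}(-x)R(x)$ for some $R \in \CN[x]$ (the factorization used repeatedly above). Next I would define the candidate map at the polynomial level by $T_2 \mapsto T_1$, where $T_1(\cdot) = T_2(\ovl{R}(-h)R(h)\,\cdot\,)$ — i.e., multiply the trace by the fixed nonnegative element $G(h) = \ovl{R}(-h)R(h)$. One checks this sends traces on $\mc{A}_1$-compatible data to traces compatible with $\mc{A}_2$: the defining relation $T(S(h-1)P_1(h-1)) = T(S(h+1)P_1(h+1))$ becomes, after substituting $P_1 = GP_2$, exactly the relation defining traces for $P_2$ applied to $SG$, so a trace for $P_2$ pulls back to a trace for $P_1$. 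Reality on $\RN[ih]$ is preserved because $G(h) \in \RN[ih]$.

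The cone comparison is the heart of the matter. Using the description of $C_i$ from Proposition~\ref{StatHowCLooks}(2) as the image of pairs $\ovl{R_1}(-x)R_1(x) - \Re_0(\ovl{R_2}(1-x)R_2(x-1)P_i(x-1))$, I would show that the dual map sends $C_1$-positive functionals to $C_2$-positive functionals. Concretely, a generator of $C_2$ coming from $(R_1', R_2')$ for $P_2$, when multiplied by $G$, should land in $C_1$: the term $G(x)\ovl{R_1'}(-x)R_1'(x) = \ovl{(RR_1')}(-x)(RR_1')(x)$ is again of the first type, and $G(x-1)\cdot\Re_0(\ovl{R_2'}(1-x)R_2'(x-1)P_2(x-1))$ — here one uses that $G(x-1)$ is real on $\Re x = 0$, hence commutes with $\Re_0$ — equals $\Re_0(\ovl{(R''R_2')}(1-x)(R''R_2')(x-1)P_1(x-1))$ where $R''$ is a square-root factorization of $G(x-1)$ on the line $\Re x = -1$. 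So multiplication by $G$ maps $C_2$ into $C_1$, and dualizing gives the desired injection of positive-form cones; the final sentence about unitarizability is then immediate since a positive \emph{definite} functional pulls back to a positive definite one (the pullback map on polynomials is surjective, so nonzero functionals pull back to nonzero functionals).

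The step I expect to be the main obstacle is the bookkeeping around $\Re_0$ and the shift by $1$: one must be careful that $G$ is nonnegative not only on $\Re x = 0$ but that $G(x-1)$ behaves correctly on $\Re x = -1$ (equivalently $G$ on $\Re x = -1$), which is \emph{not} automatic from the hypothesis and may require either an additional assumption or the observation that $G \in \RN[ix]$ forces a relation between its values on $\Re x = 0$ and $\Re x = -1$ via $G(x) = \ovl{G}(-x)$. I would check whether nonnegativity on the imaginary axis, together with $G = \ovl{R}(-x)R(x)$, actually gives what is needed on the shifted line — if $R(x-1)$ and $\ovl{R}(1-x)$ provide the factorization of $G(x-1)$ on $\Re x = -1$ automatically, the argument goes through cleanly; otherwise the statement as phrased may implicitly need the stronger hypothesis that $G$ is nonnegative on both lines, which should be flagged.
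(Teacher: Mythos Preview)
Your overall strategy---use the trace characterization and Proposition~\ref{StatHowCLooks} to compare cones, then dualize---is exactly right, but the concrete map you propose is not the one that works, and this derails both the trace check and the cone comparison.

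The map you define, $T_1(F)=T_2(G(h)F)$ with $G=P_1/P_2$, does \emph{not} in general satisfy the trace identity for $P_1$: unwinding gives $T_2\bigl(G(h)\,S(h-1)G(h-1)P_2(h-1)\bigr)$ versus $T_2\bigl(G(h)\,S(h+1)G(h+1)P_2(h+1)\bigr)$, and the unshifted factor $G(h)$ prevents this from matching the $P_2$-trace relation for any single polynomial $S'$. In fact the verbal check you give---``substituting $P_1=GP_2$ gives the $P_2$-relation applied to $SG$''---is a correct verification, but of the \emph{identity} map $T_1=T_2$, not of your multiplied map. That identity map is the natural map: since every relation $S(x+1)P_1(x+1)-S(x-1)P_1(x-1)$ equals $(SG)(x+1)P_2(x+1)-(SG)(x-1)P_2(x-1)$, the identity on $\RN[ix]$ descends to a surjection $\phi\colon L(P_1)\to L(P_2)$, and dualizing gives the injection of trace spaces. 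This is what the paper does.

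Your cone argument is similarly off by a direction and by an inconsistency. To conclude that $\psi_2\circ\phi$ is positive on $C(P_1)$ you need $\phi(C(P_1))\subset C(P_2)$, i.e.\ you must show generators of $C(P_1)$ land in $C(P_2)$, not the reverse. Moreover, in your computation you multiply the $R_1$-term by $G(x)$ but the $R_2$-term by $G(x-1)$; these are different polynomials, so this is not multiplication by a single element. The correct check is immediate once you use $\phi=\mathrm{id}$: a $C(P_1)$-generator has second term $-\Re_0\bigl(\ovl{R_2}(1-x)R_2(x-1)P_1(x-1)\bigr)$, and writing $P_1(x-1)=G(x-1)P_2(x-1)$ with $G(x)=\ovl{R}(-x)R(x)$ (a polynomial identity, valid for all $x$) gives $G(x-1)=\ovl{R}(1-x)R(x-1)$, so the factor absorbs into $R_2'=R_2R$ and the term is a $C(P_2)$-generator.

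This last observation also dissolves the obstacle you flagged: you do not need $G$ to be nonnegative on the line $\Re x=-1$. The factorization $G(x)=\ovl{R}(-x)R(x)$ holds as an identity in $\CN[x]$, so substituting $x\mapsto x-1$ automatically produces the required shifted factorization.
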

\begin{proof}
For $P(x)\in \RN[ix]$ of degree $n$ denote by $L(P)$ the dual space to the space of real traces on the deformation of $\CN[x,y]^{C_n}$ that corresponds to $P$. We have \[L(P)=\RN[ix]/\{P(x+1)S(x+1)-P(x-1)S(x-1)\mid S(x)\in i\RN[ix]\}.\] Denote by $C(P)$ the set of images of $a\ovl{a}$, $a\in\mc{A}$ in $L(P)$. From Proposition~\ref{StatHowCLooks} we get that $C(P)$ is the image of \[\Bigl\{\ovl{R_1}(-x)R_1(x)-\Re_0 \ovl{R}_2(1-x)R_2(x-1)P(x-1)\mid (R_1,R_2)\in \CN[x]^2\setminus\{(0,0)\}\Bigr\}\]  in $L(P)$.

It is easy to see that in this case we have a natural map $\phi\colon L(P_1)\to L(P_2)$ and $\phi(C(P_1))\subset \phi(C(P_2))$. Suppose that a positive definite form on $\mc{A}_2$ is given by the trace $\psi_2\colon L(P_2)\to \RN$. Then $\psi_2\circ \phi$ gives a trace on $\mc{A}_1$ and the corresponding form is positive definite.

\end{proof}
\begin{cor}
\label{CorUnitarizableFromC}
Suppose that the conditions of Corollary~\ref{CorCdoesNotContainZero} hold. Then $\mc{A}$ is unitarizable.
\end{cor}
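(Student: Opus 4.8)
The plan is to prove the statement by induction on $\deg P$, the key inputs being: by Corollary~\ref{CorCdoesNotContainZero} we have $0\notin C$, hence by Corollary~\ref{CorConnectionCandUnitary} the algebra $\mc A$ carries a nonzero positive semidefinite invariant Hermitian form, and the task is to upgrade this to a positive definite one. Observe first that, since $P(x)=\ovl P(-x)$, every root $\alpha$ of $P$ with $|\Re\alpha|\ge 1$ has $\Re\alpha\ne 0$ and so comes paired with the distinct root $-\ovl\alpha$; in particular the number of such roots is even, and the induction will peel them off two at a time.

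The base case is the one where every root of $P$ has $|\Re\alpha|<1$. Then $P$ has no two real roots differing by $2$ (if $a$ and $a+2$ were both roots, one of $|a|,|a+2|$ would be $\ge 1$), so there is no $\lambda\in\RN$ with $P(\lambda-1)=P(\lambda+1)=0$. Take a nonzero positive semidefinite invariant form on $\mc A$ as above and let $I$ be its radical; by invariance $I$ is a two-sided ideal, proper since the form is nonzero. If $I\ne 0$, then $I$ has finite codimension by the Proposition that every nonzero ideal of $\mc A$ has finite codimension, so $\mc A/I$ is a nonzero finite-dimensional unitarizable $\mc A$-bimodule; being unitary it decomposes into irreducible bimodules, each $1$-dimensional by the Proposition on finite-dimensional unitarizable bimodules. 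On a $1$-dimensional summand $e$ and $f$ act by $0$ (immediate from $[h,e]=2e$ and $[h,f]=-2f$), hence $e$ and $f$ act by $0$ on all of $\mc A/I$, which is therefore generated as an algebra by the image of $h$, is commutative, and has coinciding left and right $h$-actions. Combined with the invariance $(hu,v)=(u,vh)$ of the form and positive definiteness, this forces the $h$-weight $\lambda$ of each summand to be real; but $P(\lambda\pm 1)=0$ by the same Proposition, contradicting the previous sentence. Hence $I=0$, the form is positive definite, and $\mc A$ is unitarizable.

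For the inductive step, suppose $P$ has a root $\mu$ with $|\Re\mu|\ge 1$. Then $\mu$ and $-\ovl\mu$ are distinct roots, and $D(x):=-(x-\mu)(x+\ovl\mu)$ lies in $\RN[ix]$, divides $P$, and satisfies $D(it)=(t-\Im\mu)^2+(\Re\mu)^2>0$ for all real $t$. Write $P=D\cdot P'$; then $P'\in\RN[ix]$, $\deg P'=\deg P-2$, and $P'$ has exactly the same roots $\alpha$ with $|\Re\alpha|<1$ as $P$. So $P'$ still satisfies the hypotheses of Corollary~\ref{CorCdoesNotContainZero}: in case~1 it still has at least three roots with $|\Re\alpha|<1$; in case~2 it still has exactly two, and since dividing by $D$ (whose leading coefficient is $-1$) reverses the sign of the leading coefficient, $P'$ has degree $2(m-1)$ with leading coefficient of the required sign $(-1)^{m-1}$. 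By the induction hypothesis the algebra corresponding to $P'$ is unitarizable, and since $P/P'=D$ is a polynomial nonnegative on $\{\Re x=0\}$, Lemma~\ref{LemConeInjection} gives that $\mc A$ is unitarizable.

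The main obstacle is the base case: passing from "$\mc A/I$ is a finite-dimensional unitarizable bimodule" to "its $h$-weights are real", which is what turns the hypothesis "all roots have $|\Re\alpha|<1$" into the conclusion $I=0$. This rests on semisimplicity of unitary bimodules, the classification of the irreducible finite-dimensional unitarizable bimodules as $1$-dimensional, and the observation that $e,f$ annihilate $\mc A/I$ (so that commutativity can be played against invariance of the form). The inductive reduction via Lemma~\ref{LemConeInjection} is then routine apart from checking that $D\in\RN[ix]$ and tracking the sign of the leading coefficient in case~2.
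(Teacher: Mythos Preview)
Your overall strategy matches the paper's: reduce via Lemma~\ref{LemConeInjection} to the case where every root of $P$ lies in the strip $|\Re\alpha|<1$, and there use Corollaries~\ref{CorCdoesNotContainZero} and~\ref{CorConnectionCandUnitary} together with the classification of finite-dimensional unitarizable bimodules to upgrade the semidefinite form to a definite one. The paper does the reduction in one step (take $Q$ to be the product of the linear factors of $P$ lying in the strip and adjust the sign); you do it pair by pair as an induction, which is only a cosmetic difference.

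There is, however, a genuine error in your base-case argument. The invariance condition is $(hu,v)=-(u,vh)$, not $(hu,v)=(u,vh)$; equivalently, the Proposition on irreducible finite-dimensional unitarizable bimodules already records $vh=-\ovl\lambda v$. Combining this with the commutativity $hv=vh$ that you derived gives $\lambda=-\ovl\lambda$, so $\lambda$ is \emph{purely imaginary}, not real. Your appeal to ``no two real roots differing by $2$'' therefore does not furnish a contradiction. The fix is immediate: if $\lambda\in i\RN$ then $\lambda\pm 1$ have real parts $\pm 1$, contradicting the strip hypothesis. In fact the whole detour through commutativity and invariance is unnecessary: for \emph{any} $\lambda\in\CN$ the numbers $\Re(\lambda-1)$ and $\Re(\lambda+1)$ differ by $2$ and so cannot both lie in $(-1,1)$; hence no $\lambda\in\CN$ satisfies $P(\lambda\pm 1)=0$, and the Proposition on finite-dimensional unitarizable bimodules rules out $I\neq 0$ directly. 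This is precisely the paper's argument.
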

\begin{proof}
Let $Q(x)$ be a polynomial that has all roots of $P(x)$ in the set $-1<\Re x<1$ with the same multiplicities. Then $\frac{P}{Q}$ is a polynomial that is real on $i\RN$ and has no roots on $i\RN$. Changing $Q$ to $-Q$ if necessary we can assume that $\frac{P}{Q}$ is positive on $i\RN$. Since $P(x)$ satisfies conditions of Corollary~\ref{CorCdoesNotContainZero} it is easy to see that $Q(x)$ satisfies conditions of Corollary~\ref{CorCdoesNotContainZero}. 

We deduce from Lemma~\ref{LemConeInjection} that we can change $P(x)$ to $Q(x)$. Denote the algebra that corresponds to $Q(x)$ by $\mc{A}$.

It is easy to see that there does not exist $\lambda$ such that $Q(\lambda+1)=Q(\lambda-1)=0$. It follows that $\mc{A}$ does not have irreducible unitarizable finite-dimensional modules. Since every ideal in $\mc{A}$ has a finite codimension we deduce that every positive semidefinite form on $\mc{A}$ is positive definite. Now we get a result from Corollaries~\ref{CorCdoesNotContainZero} and~\ref{CorConnectionCandUnitary}.
\end{proof}

\begin{rem}
Suppose that $P(x)$ is nonnegative on the line $\Re x=0$ and has a root $\alpha$ with $-1<\Re\alpha<1$. Then $P(x)$ is divisible by $P_2(x)=-(x-\alpha)(x+\ovl{\alpha})$ and $\frac{P}{P_2}$ is nonnegative on the line $\Re x=0$. Using results of subsection~\ref{SubSecSL2} we see that the regular bimodule corresponding to $P_2(x)$ is unitarizable. Using Lemma~\ref{LemConeInjection} we deduce that the regular bimodule corresponding to $P$ is unitarizable. Now there are four proofs that the regular bimodule with $P(x)$ nonnegative on $\Re x=0$ that has a root $\alpha$ with $-1<\Re\alpha<1$ is unitarizable: corollary~\ref{CorUnitarizableFromC}, this remark, analytic formula in subsection~\ref{SubSecAnalytic} and minimum principle in subsection~\ref{SubSubSecHarmonic} below. 
\end{rem}

\begin{rem}
\label{RemNonSemisimplicity}
Suppose that $P(x)=(-x^2)^m+\cdots$ has two roots $\alpha$ with $-1<\Re\alpha<1$ and $P(1)=P(-1)=0$. It follows that $\mc{A}$ has a one-dimensional representation generated by vector $v$ such that $ev=fv=hv=0$. Hence we have an exact sequence of Harish-Chanra bimodules $0\to \operatorname{Ann}(V)\to \mc{A}\to \End(V)\to 0$. It is easy to see that this sequence does not split. On the other hand $\mc{A}$ is unitarizable. Therefore there exists non-semisimple unitarizable Harish-Chandra bimodules.
\end{rem}
%\subsubsection{Non-unitarizability}
%Now we will prove in certain cases that $C$ contains zero. It will follow that $\mc{A}$ is not unitarizable.

We also have the following result about non-unitarizability.
\begin{thr}
Suppose that $n=2m$ and $P(x)$ has leading coefficient $(-1)^m$. If for all roots $\alpha$ of $P(x)$ one has $|\Re\alpha|>1$ then the regular bimodule is not unitarizable.
\end{thr}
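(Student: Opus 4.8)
The plan is to show that $0\in C$, which by Corollary~\ref{CorConnectionCandUnitary}(2) already gives that $\mc A$ is not unitarizable; by Proposition~\ref{StatWhenCcontainsZero} this amounts to producing a nonzero $F\in\CN[x]$ with $\Re F(x)\ge 0$ on the line $\Re x=0$ and $\Re\big(F(x)P(x)\big)\ge 0$ on the line $\Re x=-1$. The first preparatory observation is that the hypotheses force a factorization $P(x)=P_+(x)\,\ovl{P_+}(-x)$: since $P(x)=\ovl P(-x)$ and no root lies on $\Re x=\pm1$, the roots split into the $m$ roots $\alpha_1,\dots,\alpha_m$ with $\Re\alpha_j>1$ and their images $-\ovl\alpha_1,\dots,-\ovl\alpha_m$ with $\Re<-1$, and since the leading coefficient is exactly $(-1)^m$ one gets $P=P_+\,\ovl{P_+}(-\,\cdot\,)$ with $P_+$ the monic polynomial having roots $\alpha_1,\dots,\alpha_m$. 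Consequently $P$ is nonnegative on $\Re x=0$ (there $P(it)=|P_+(it)|^2$), and on $\Re x=-1$ one has $P(-1+it)=P_+(-1+it)\,\ovl{P_+(1+it)}$, so that the second condition on $F$ reads $\Re\big(F(x)P_+(x)\,\ovl{P_+(x+2)}\big)\ge 0$ on $\Re x=-1$.

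The construction of $F$ is the heart of the proof, and is where the analytic lemmas of the appendix (in the spirit of Lemma~\ref{LemWindingNumber}) enter. The base case $m=1$ is already transparent: there $\Re P(-1+it)=(t-\Im\alpha_1)^2+(\Re\alpha_1)^2-1>0$ because $|\Re\alpha_1|>1$, so one may simply take $F\equiv 1$. For $m\ge 2$, $\Re P$ is in general not $\ge 0$ on $\Re x=-1$ — the clustering of roots of $P_+$ near the critical line $\Re x=1$ makes the argument of $P$ swing along $\Re x=-1$ — so $F$ must be chosen to compensate this argument while keeping $\Re F\ge 0$ on $\Re x=0$. A natural "main part" is $\ovl{P_+}(-2-x)\,P_+(x+2)$, which already makes $F(x)P(x)=|P_+(x)|^2\,|P_+(x+2)|^2$ nonnegative on $\Re x=-1$; one then multiplies by a correction factor $G(x)=K(x)\,\ovl K(-2-x)$ (automatically $\ge 0$ on $\Re x=-1$, hence harmless for the second condition) where the polynomial $K$ is tuned to the actual positions of the $\alpha_j$ so that, along $\Re x=0$, the argument of $K(x)\,\ovl{K(-2-x)}$ cancels that of $\ovl{P_+(-2+it)}\,P_+(2+it)$ enough to keep $\Re F\ge 0$. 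Carrying this out is precisely the analytic content: one is approximating by an honest polynomial a function with prescribed boundary arguments on two parallel lines, subject to winding‑number constraints, and the strict inequalities $|\Re\alpha_j|>1$ provide the slack needed to do so. Crucially, the degree/leading‑coefficient bookkeeping that produced a contradiction in the proof of Corollary~\ref{CorCdoesNotContainZero} (forcing $0\notin C$) is here consistent exactly because the leading coefficient of $P$ equals $(-1)^m$; this parity is what permits such an $F$ to exist.

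Given such an $F$, Proposition~\ref{StatWhenCcontainsZero} yields $0\in C$, and Corollary~\ref{CorConnectionCandUnitary}(2) finishes the proof. The main obstacle is exactly the step just described — the explicit production of $F$ (equivalently of $K$); the reductions to it, via the factorization $P=P_+\,\ovl{P_+}(-\,\cdot\,)$ and the cone machinery of Propositions~\ref{StatHowCLooks} and~\ref{StatWhenCcontainsZero} and Corollary~\ref{CorConnectionCandUnitary}, are routine once that construction is available.
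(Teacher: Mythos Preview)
Your overall strategy is exactly the paper's: reduce non-unitarizability to $0\in C$ via Corollary~\ref{CorConnectionCandUnitary} and Proposition~\ref{StatWhenCcontainsZero}, i.e.\ to the existence of a nonzero $F$ with $\Re F\ge 0$ on $\Re x=0$ and $\Re(FP)\ge 0$ on $\Re x=-1$. Your preliminary observations are correct and useful: the factorization $P=P_+\,\ovl{P_+}(-\cdot)$ is available precisely because the leading coefficient is $(-1)^m$, and the $m=1$ case with $F\equiv 1$ is clean and checks out.

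The gap is that for $m\ge 2$ you do not actually construct $F$ (equivalently $K$); you only describe what such a polynomial should do. Your ``main part'' $\ovl{P_+}(-2-x)\,P_+(x+2)$ makes $FP=|P_+|^2|P_+(\cdot+2)|^2$ on $\Re x=-1$, and any factor $K(x)\ovl K(-2-x)$ is nonnegative there, so the second inequality is free; but turning the first inequality into a statement about the argument of $K(it)\ovl{K(-2+it)}$ relative to that of $\ovl{P_+(-2+it)}P_+(2+it)$ and then \emph{actually producing} a polynomial $K$ that effects this cancellation is the entire analytic content of the theorem, and you leave it as a heuristic. You even say so yourself (``the main obstacle is exactly the step just described''). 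As written this is a plan, not a proof.

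For comparison, the paper does supply this construction, in a different parametrization. It passes to $Q(x)=P(2x-1)$, which under your hypotheses is positive on $\Re x=\tfrac12$ with no roots in $0\le\Re x\le 1$, and proves that $1/Q$ admits a \emph{good approximation}: for each quadratic factor $-(x-a)(x-1+\ovl a)$ with $\Re a<0$ one first writes down an entire function $R_0(x)=(e^{n(x-a)}-1)(e^{n(b-x)}-1)$ with arbitrarily small argument on $\Re x=0$, then approximates it by an explicit polynomial $(\prod E_{l_j}^+ -1)(\prod E_{l_j}^- -1)$ with $E_l^\pm$ of the form $(1\pm(x-c)/l)^l$, keeping quantitative control of the argument; finally these approximations are multiplied over the factors (Proposition~\ref{StatGoodApproximationOfProduct}). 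The resulting $F$ satisfies both sign conditions in Proposition~\ref{StatWhenCcontainsZero} (this is Proposition~\ref{PropFromGoodApproxToNonUnit}), and Corollary~\ref{CorGoodApprox} then yields the theorem. If you want to complete your approach rather than follow the paper's, you would need to provide a comparably concrete construction of $K$; the winding-number considerations you invoke explain why such a $K$ is not ruled out, but they do not by themselves produce one.
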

\begin{proof}
Using Proposition ~\ref{StatWhenCcontainsZero} we see that it is enough to prove that the cone $C$ contains zero. Now we get the result from Corollary~\ref{CorGoodApprox} and Proposition~\ref{PropFromGoodApproxToNonUnit} in appendix.
\end{proof}

Now Theorem~\ref{ThrMain} follows from this theorem and Corollary~\ref{CorUnitarizableFromC}.

\subsection{An analytic construction of a positive definite form}
\label{SubSecAnalytic}
I learned the following approach for constructing positive definite forms from Pavel Etingof. It appeared in his ongoing joint work with Eric Rains and Douglas Stryker. 

Let $P(x)=(x^2-\lambda^2)P_1(x)$, where $0\leq\lambda<1$ and $P_1(x)$ is nonnegative on the line $i\RN$. Consider \[w(x)=\ds{\frac{e^{\pi i x}}{(e^{\pi i x}+e^{\pi i\lambda})(e^{\pi i x}+e^{-\pi i\lambda})}}.\] This function has the following properties:
\begin{enumerate}
\item
$w(x)$ is $2$-periodic and decays exponentially when $x$ tends to $\pm i\infty$
\item
$P(x)w(x+1)$ is holomorphic on $\Re x\in [-1,1]$.
\item
$w$ is positive on the line $\Re x=0$
\item
$w$ is negative on the line $\Re x=1$.
\end{enumerate}

Let $T(F(x))=\int_{\RN} F(x)w(x) |dx|$.
\begin{lem}
\begin{enumerate}
\item
$T$ defines a trace on $\mc{A}$.
\item
$T$ is positive on polynomials $R(x)\ovl{R}(x)$.
\item
$T$ is positive on polynomials $-R(x-1)\ovl{R}(1-x)P(x-1)$.
\end{enumerate}
Hence $T$ gives an invariant positive definite form on $\mc{A}$.
\end{lem}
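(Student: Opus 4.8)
The plan is to verify the three numbered claims in order, since together with the proposition characterizing positive definiteness via the cone $C$ (and the $f(C)\subset\RN_{>0}$ criterion) they immediately give the last sentence. Throughout I would use the four listed properties of $w(x)$: $2$-periodicity with exponential decay toward $\pm i\infty$, holomorphy of $P(x)w(x+1)$ on the strip $\Re x\in[-1,1]$, positivity of $w$ on $\Re x=0$, and negativity of $w$ on $\Re x=1$. Note that the integrals $T(F)=\int_{\RN}F(x)w(x)\,|dx|$ converge for every polynomial $F$ precisely because of the exponential decay.

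\textbf{Step 1 (trace property).} By the proposition classifying traces, I must check that $T$ is supported on $\mc{A}_0$ — automatic, since $T$ is only defined on the polynomial part $\CN[h]\cong\mc{A}_0$ and extended by zero on other weight spaces — and that $T(S(h-1)P(h-1))=T(S(h+1)P(h+1))$ for all $S\in\CN[x]$. Equivalently $\int_{\RN}S(x-1)P(x-1)w(x)\,|dx| = \int_{\RN}S(x+1)P(x+1)w(x)\,|dx|$. Substituting $x\mapsto x+2$ in the left-hand integral and using $2$-periodicity $w(x+2)=w(x)$, the left side becomes $\int S(x+1)P(x+1)w(x+2)\,|dx| = \int S(x+1)P(x+1)w(x)\,|dx|$, which is the right side. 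The only subtlety is justifying the contour shift: since $P(x)w(x+1)$ — equivalently $P(x-1)w(x)$ after a shift — is holomorphic on the closed strip between $\Re x = 0$ and $\Re x = 2$ and decays exponentially as $\Im x\to\pm\infty$, the integral of $S(x-1)P(x-1)w(x)$ over the line $\Re x = 0$ equals that over $\Re x = 2$ by Cauchy's theorem applied to a tall rectangle (the horizontal sides contribute nothing in the limit).

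\textbf{Step 2 (positivity on $R\ovl R$ and on $-R(x-1)\ovl R(1-x)P(x-1)$).} For a polynomial $R$, on the line $\Re x = 0$ we have $\ovl R(x) = \ovl{R(x)}$, so $R(x)\ovl R(x) = |R(x)|^2 \geq 0$ there, and since $w > 0$ on $\Re x = 0$ the integrand $R(x)\ovl R(x)w(x)$ is nonnegative and not identically zero (unless $R=0$); hence $T(R\ovl R)>0$. For the third claim, the key is to move the contour: $-R(x-1)\ovl R(1-x)P(x-1)w(x)$ is, after the substitution $x\mapsto x+1$, equal to $-R(x)\ovl R(-x)P(x)w(x+1)$, and by property (2) this is holomorphic on $\Re x\in[-1,1]$ with exponential decay, so $\int_{\Re x = 0}$ of the original equals $\int_{\Re x = 1}$ of $-R(x-1)\ovl R(1-x)P(x-1)w(x)$ shifted, i.e. $\int_{\RN}\bigl(-R(iy)\ovl R(-iy)P(iy)\bigr)w(1+iy)\,dy$. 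On $\Re x = 1$ we have $\ovl R(1-x) = \ovl{R(x)}$, hence $R(x-1)\ovl R(1-x)\big|_{x = 1+iy}\geq 0$, and $P(x)\ge 0$ on $i\RN$ — wait, more precisely I need $P(x-1)\geq 0$ on $\Re x = 1$, i.e. $P$ nonnegative on $i\RN$; but here $P(x) = (x^2-\lambda^2)P_1(x)$ with $0\le\lambda<1$ need not be nonnegative on $i\RN$. So instead I use property (4): on $\Re x = 1$, $w < 0$, and I combine it with the sign of $P(x-1)R(x-1)\ovl R(1-x)$ there. The clean way: after shifting, the integrand is $\bigl(-R(x-1)\ovl R(1-x)P(x-1)\bigr)w(x)$ on $\Re x = 1$; write $x = 1+iy$, then $R(x-1)\ovl R(1-x) = |R(iy)|^2\ge 0$, and I must show $-P(iy)w(1+iy)\ge 0$. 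Since $P(iy) = (-y^2-\lambda^2)P_1(iy) = -(y^2+\lambda^2)P_1(iy)$ with $P_1\ge 0$ on $i\RN$, we get $P(iy)\le 0$; and $w(1+iy)<0$ by property (4); so $-P(iy)w(1+iy)\le 0$ — that is the wrong sign. Let me re-examine: actually $-R(x-1)\ovl R(1-x)P(x-1)w(x)$ with $R(x-1)\ovl R(1-x)\ge 0$, $P(x-1)\le 0$, $w(x)<0$ gives $-(\ge0)(\le0)(<0) = -(\ge 0) \le 0$. Hmm. This suggests I should shift the contour for $-R(x-1)\ovl R(1-x)P(x-1)w(x)$ from $\Re x = 0$ rather than starting on $\Re x = 1$; or, the factor $P_1$ should be grouped with $w$. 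The resolution is that property (2) says $P(x)w(x+1)$ is holomorphic on $[-1,1]$, so I move from $\Re x = 1$ back to $\Re x = 0$: then on $\Re x = 0$, $w>0$ and $P(x-1)\ge 0$ need not hold either. The correct accounting must be that $-R(x-1)\ovl R(1-x)P(x-1)$, evaluated where it is manifestly real, has a definite sign; the $(x^2-\lambda^2)$ factor is negative on $i\RN+1$ when $|\lambda|<1$ exactly because $(1+iy)^2 - \lambda^2 = 1-\lambda^2 - y^2 + 2iy$ is not real, so this must instead be handled on the shifted line where $P(x-1) = P$ evaluated off the imaginary axis.

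\textbf{Main obstacle.} The genuine difficulty — and the step I would budget the most care for — is precisely this sign bookkeeping in Step 2: identifying the correct contour (line $\Re x = 0$ versus $\Re x = 1$) on which each of the two quadratic-form integrands becomes a manifestly real expression of definite sign, and checking that the specific $w$ built from $e^{\pi i x}/((e^{\pi ix}+e^{\pi i\lambda})(e^{\pi ix}+e^{-\pi i\lambda}))$ has the claimed signs (properties (3) and (4)) together with the pole structure making property (2) hold — the poles of $w$ at $e^{\pi i x} = -e^{\pm\pi i\lambda}$, i.e. $x\in\pm\lambda + 1 + 2\ZN$, are exactly cancelled by the zeros of $P(x)$ at $x = \pm\lambda$ after the shift $x\mapsto x+1$ modulo $2\ZN$. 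Once the signs and holomorphy are pinned down, convergence is routine from exponential decay, Cauchy's theorem handles the contour shifts, and the conclusion follows by invoking the positive-definiteness criterion already established. I would also remark that the hypothesis $0\le\lambda<1$ is used in an essential way: it is what places the cancelling zeros of $P$ strictly inside the strip and makes the sign of $w$ on $\Re x = 1$ strictly negative.
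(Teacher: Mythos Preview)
Your plan is exactly the paper's: verify the trace identity by the substitution $x\mapsto x+2$, the $2$-periodicity $w(x+2)=w(x)$, and a Cauchy contour shift across the strip where $P(x-1)w(x)$ is holomorphic; get item (2) directly from $w>0$ on $i\RN$; and for item (3) rewrite
\[
T\bigl(-P(x-1)R(x-1)\ovl{R}(1-x)\bigr)=-\int_{i\RN}R(x)\ovl{R}(-x)\,P(x)\,w(x+1)\,|dx|
\]
via the shift $x\mapsto x+1$ and holomorphy of $P(x)w(x+1)$ on $\Re x\in[-1,1]$, then read off the sign from $R(x)\ovl{R}(-x)=|R(x)|^2\ge 0$ and $w(x+1)<0$ on $i\RN$.

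The only place your proposal stalls is the sign of $P$ on $i\RN$, and that is not a mathematical obstacle but a typo in the displayed hypothesis. The paper's own proof of (3) uses, verbatim, ``since $P(x)$ is positive almost everywhere on $i\RN$''; this is the intended assumption, consistent with the rest of the paper (leading coefficient $(-1)^m$ in Theorem~\ref{ThrMain}(2), and the quadratic factor written as $-(x-\alpha)(x+\ovl\alpha)$ in the remark after Corollary~\ref{CorUnitarizableFromC}). In other words the factor should be $\lambda^2-x^2$ rather than $x^2-\lambda^2$ (equivalently, $P_1\le 0$ on $i\RN$). With $P\ge 0$ on $i\RN$ your own bookkeeping already gives
\[
-\,|R(x)|^2\cdot P(x)\cdot w(x+1)\;=\;-(\ge 0)(\ge 0)(<0)\;>\;0,
\]
and item (3) is done. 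So there is no missing idea; once you correct that sign in the hypothesis, your Step~3 is complete and matches the paper line for line.

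Two small cleanups. First, item (2) of the lemma should read $R(x)\ovl{R}(-x)$ (the proof in the paper uses this); your claim ``$\ovl{R}(x)=\ovl{R(x)}$ on $\Re x=0$'' is false for the polynomial $\ovl{R}$ with conjugated coefficients---the correct identity is $\ovl{R}(-x)=\ovl{R(x)}$ on $i\RN$, which is exactly what makes $R(x)\ovl{R}(-x)=|R(x)|^2$ there. Second, for the final sentence you do not need the cone criterion: once (1)--(3) hold, the paper simply observes that $T(\Re_0(R(x-1)\ovl{R}(1-x)P(x-1)))=T(R(x-1)\ovl{R}(1-x)P(x-1))$ by the trace identity, so positivity on the two generator types in Proposition~\ref{StatHowCLooks} follows directly.
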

\begin{proof}
\begin{enumerate}
\item
We have
\begin{multline*}T(F(x+1)P(x+1))=-i\int_{i\RN}F(x+1)P(x+1)w(x)dx=\\
-i\int_{i\RN}F(x+1)P(x+1)w(x+2)dx=-i\int_{i\RN+2}F(x-1)P(x-1)w(x)dx.
\end{multline*}

Since $P(x)w(x+1)$ is holomorphic on $\Re x\in [-1,1]$ $P(x-1) w(x)$ is holomorphic on $\Re x\in [0,2]$, so we have \begin{multline*}-i\int_{i\RN+2}F(x-1)P(x-1)w(x)dx=-i\int_{i\RN}F(x-1)P(x-1)w(x)dx=\\
\int_{i\RN}F(x-1)P(x-1)w(x)|dx|= T(F(x-1)P(x-1)).
\end{multline*}
\item
The polynomial $R(x)\ovl{R}(-x)$ is positive almost everywhere on $i\RN$, $w(x)$ is positive on $i\RN$. Hence \[T(R(x)\ovl{R}(-x))=\int_{i\RN} R(x)\ovl{R}(-x)w(x)|dx|>0.\]
\item
Since $P(x)$ is positive almost everywhere on $i\RN$ we see that the polynomial $P(x)R(x)\ovl{R}(-x)$ is positive almost everywhere on $i\RN$. Since $w(x+1)$ is negative on $i\RN$ almost everywhere we have
\begin{multline*}T(-P(x-1)R(x-1)\ovl{R}(1-x))=-\int_{i\RN}R(x-1)\ovl{R}(1-x)P(x-1)w(x)|dx|=\\
-\int_{i\RN}R(x)\ovl{R}(-x)P(x)w(x+1)|dx|>0.
\end{multline*}
We used that $P(x)w(x+1)$ is holomorphic on $i\RN\times [0,1]$.
\end{enumerate}
The trace $T$ gives a positive definite form if and only if $T(R(x)\ovl{R}(-x))>0$ and $T(\Re_0 \biggl(R(x-1)\ovl{R}(1-x)P(x-1)\biggr))<0$. We have \begin{multline*} T(\Re_0 \biggl(R(x-1)\ovl{R}(1-x)P(x-1)\biggr))=\\
\frac12T(R(x-1)\ovl{R}(1-x)P(x-1)+R(1+x)\ovl{R}(-1-x)P(x+1))=\\
T(R(x-1)\ovl{R}(1-x)P(x-1)).
\end{multline*}
In the last equality we used that $T$ is invariant. The lemma follows.
\end{proof}
%The same construction and proof work for $P(x)$ nonnegative on $\Re x=\frac{n}{2}$ that has root $\alpha$ with $\Re\alpha\in (0,n)$.

\subsection{Another proof of positivity}
\label{SubSubSecHarmonic}
I learned the following approach for constructing positive definite forms from Fedor Petrov. He told me a proof that a certain trace $T$ is positive on polynomials $F(x)=\ovl{R}(-x)R(x)$ and I extended this proof to the case $F(x)=-\Re_0(\ovl{R}(1-x)R(x-1)P(x-1))$. It follows that $T$ gives a positive definite form.

Recall that traces $T$ are in one-to-one correspondence with $T'\colon\CN[x]\to \CN$ such that $T'(PS)=0$ for all $S\in \CN[x]$ and $T'(1)=0$. If $F(x)=S(x+1)-S(x-1)$ then $T'(S)=T(F)$.

Suppose that $n$ is even and $P(x)=-(x-\alpha)(x-\beta)P_1(x)$, where $\alpha+\ovl{\beta}=0$, $-1<\Re \alpha\leq \Re \beta<1$, $P_1(x)$ is nonnegative when $\Re x=0$.  Consider $T'(R)=R(\beta)-R(\alpha)$ or $2R'(\alpha)$ in the case $\alpha=\beta$.  Without loss of generality we can assume that $\alpha,\beta\in\RN$, so $\alpha=-\beta$.

The condition of positivity is equivalent to $T$ being positive on $\ovl{R}(-x)R(x)$ and $-\Re_0(\ovl{R}(1-x)R(x-1)P(x-1))$ for all nonzero polynomials $R$.

We start with the first case: $F(x)=\ovl{R}(-x)R(x)$. This is equivalent to $F(x)\geq 0$ for $x\in i\RN$. Consider $S(x)$ such that $F(x)=S(x+1)-S(x-1)$. We can choose $S$ such that $S(-x)=-\ovl{S}(x)$. Consider $G(x)=S(-x)-S(\ovl{x})=-\ovl{S}(x)-S(\ovl{x})=-2\Re \ovl{S}(x)$. It follows that $G(x)$ is a harmonic function. For $x\in i\RN$ we have $-x=\ovl{x}$, so $G(x)=0$. For $x\in i\RN$ we have $\ovl{-1+x}=-1-x$, so $G(-1+x)=S(1-x)-S(-1-x)=F(-x)\geq 0$.

We see that $F(x)$ starts with $(-x^2)^k$. Hence $S(x)$ starts with $\frac{(-1)^k}{2(2k+1)}x^{2k+1}$. Therefore \[G(a+it)=S(-a-it)-S(a-it)=\frac{1}{2}(-2a)(t^{2k})+M(a,t),\] where $M(a,t)$ is a polynomial in two variables that has degree less than $2k$ in $t$. Since $G(it)=0$ we have $M(0,t)=0$, so $M(a,t)=aM_1(a,t)$. Hence \[G(a+it)=-a(t^{2k}-M_1(a,t)).\] It follows that for big $t$ and $a\in [-1,0)$ we have $G(a+it)>0$. 

We see that $G(x)\neq 0$ is nonnegative on the boundary of rectangle $[-1,0]\times [-iN,iN]$ for big enough $N$. Using the maximum principle we deduce that $G(x)$ is positive on $(-1,0)\times (-iN,iN)$. In particular, when $\alpha\neq 0$, $0<G(\alpha)=S(-\alpha)-S(\alpha)=S(\beta)-S(\alpha)=T(F)$.

Suppose that $\alpha=0$. Let $x=a+it$. We have $-\frac{\partial G}{\partial a}(x)=\frac{\partial S}{\partial a}(-x)+\frac{\partial S}{\partial a}(\ovl{x})=S'(-x)+S'(\ovl{x})$. For $x=0$ we get $-\frac{\partial G}{\partial a}(0)=2S'(0)=T(F)$. Using the maximum principle and Hopf lemma we deduce that $-\frac{\partial G}{\partial a}(0)>0$, so $T(F)>0$.

Now we deal with the case $F(x)=-\Re_0(\ovl{R}(1-x)R(x-1)P(x-1))$. Consider \[G_0(x)=S(-\ovl{x})-S(x)=-2\Re S(x).\] \[G_1(x)=\Re(\ovl{R}(-x)R(x)P(x)).\] \[G(x)=G_0(x)+G_1(x).\] The functions $G_0$, $G_1$ and $G$ are harmonic.

Recall that for $S(x)\in \CN[x]$, $\Re_0 S(x)$ is an element of $\CN[x]$ such that $\Re(S(x)|_{i\RN})=(\Re_0 S(x))|_{i\RN}$. 

For $x\in i\RN-1$ we have $G_0(x)=S(-\ovl{x})-S(x)=S(x+2)-S(x)=F(x+1)$. For $x\in i\RN$ we have \[G_1(x-1)=\Re(\ovl{R}(1-x)R(x-1)P(x-1))=\Re_0(\ovl{R}(1-x)R(x-1)P(x-1))=-F(x).\] Therefore $G_1(x)=-F(x+1)$ for $x\in i\RN-1$. Hence $G(x)=0$ for $x\in i\RN-1$.

For $x\in i\RN$ we have \[G_0(x)=S(-\ovl{x})-S(x)=0,\] \[G_1(x)=\Re(\ovl{R}(-x)R_2(x)P(x))=|R(x)|^2P(x)\geq 0\] since $P(x)\geq 0$ for $x\in i\RN$.

Since $\ovl{R}(-x)R(x)P(x)\geq 0$ when $x\in i\RN$ it has leading term $c(-x^2)^k$ for some $c>0, k\in\ZN_{>0}$. Hence \[F(x)=-\Re_0(\ovl{R}(1-x)R(x-1)P(x-1))\] has leading term $-c(-x^2)^k$. It follows that $S(x)$ has leading term $(-1)^{k+1} c\frac{x^{2k+1}}{2(2k+1)}$. 

For $x=a+it$ we have \[G_0(x)=S(-a+it)-S(a+it)=(-2a)(\frac{(-1)^{k+1}}2c (it)^{2k}+M_0(a,t)),\] where $M_0$ has degree less than $2k$ in $t$. We also have \[G_1(a+it)=(-1)^{k}\Re c(a+it)^{2k}+\cdots=ct^{2k}+M_1(a,t),\] where $M_1$ has degree less than $2k+2$ in $t$. Hence \[G(a+it)=G_0(a+it)+G_1(a+it)=(a+1)ct^{2k}+M(a,t),\] where $M(a,t)$ has degree less than $2k$ in $t$. Since $G(it-1)=0$ we have $M(a,t)=(a+1)N(a,t)$, so \[G(a+it)=(a+1)(ct^{2k+2}+N(a,t)).\] 

It follows that $G(a+it)>0$ for $a\in (-1,0]$ and big enough $t$. Since $G(x)\geq 0$ for $x\in i\RN$ or $x\in i\RN+1$ we deduce that for big enough $N>0$ $G(x)$ is nonnegative on the boundary of rectangle $[-1,0]\times [-N,N]$.

Suppose that $\alpha<0$. Using the maximum principle we deduce that $G(\alpha)>0$. We have $G(\alpha)=G_0(\alpha)+G_1(\alpha)=S(\beta)-S(\alpha)+\Re \ovl{R}(-\alpha)R(\alpha)P(\alpha)=T(F)+0=T(F)$.

Suppose that $\alpha=0$. Similarly to the above we use the maximum principle and Hopf lemma to deduce that $T(F)>0$.

So we proved that $T(F)>0$ when $F=R(x)\ovl{R}(-x)$ or $F(x)=-\Re_0(R(x-1)R(1-x)P(x-1)$. Using Proposition~\ref{StatHowCLooks} we deduce that $T$ gives a positive definite form on $\mc{A}$.

\begin{rem}
\label{RemManyForms}
Suppose that $P(x)=(-x^2)^m+\cdots$  has $2m$ distinct roots such that for any root $\alpha$ we have $0<|\Re\alpha|<1$. It follows that there are $m$ distinct pairs of roots $(\alpha,\beta)$ such that $\alpha+\ovl{\beta}=0$. It follows from an analytic formula in subsection~\ref{SubSecAnalytic} or the proof in subsection~\ref{SubSubSecHarmonic} that each pair gives a positive definite form. It is not hard to see that these positive definite forms are linearly independent. It follows that in this case the cone of positive definite forms has dimension at least $m$.
\end{rem}

%It is okay to hold different notations for relations in $\mc{A}$ for different sections
\section{Appendix}
\subsection{Dual bimodule.}
\label{SubSubSecDualBimod}
Suppose that $M$ is a Harish-Chandra $\mc{A}$-bimodule, $M^*$ is the dual space. Then $M^*$ has a natural structure of an $\mc{A}$-bimodule. Let $M^{\vee}$ be the set of elements $v\in M^*$ such that $\ad e$ and $\ad f$ act locally nilpotently on $v$, $\ad h$ acts locally finitely. It is easy to see that $M^{\vee}$ is a subbimodule in $M^*$.

%$M^{\vee}$ should be a Harish-Chandra bimodule, but this is not straightforward. 

%Ivan says this is easily seen to be locally \HC, but \HC is not obvious

An invariant Hermitian form on $M$ gives an $\mc{A}$-bimodule homomorphism from $\ovl{M}$ to $M^{\vee}$, where $a\in \mc{A}$ acts on $\ovl{M}$ as $\ovl{a}$. So $M^{\vee}$ can be a useful object for the classification of invariant positive definite forms.

We will use $M^{\vee}$ in the case when $n=2$. In this case $\ad e$, $\ad f$, $\ad h$ form a Lie subalgebra $\mf{sl}_2$.

Suppose that $M=\oplus_{i\in I}U_i$ is a decomposition of $M$ as an $\mf{sl}_2$-module. We have $M^*=\prod_{i\in I} U_i$. $I$ is either finite or countable and in the second case the dimension of $U_i$ tends to infinity. Suppose that $x$ is an element of $U_i$, $U_i$ has highest weight $k$, $2l+2<k$. It is easy to see that $(\ad e)^l x$ and $(\ad f)^l x$ cannot both be zero.  It follows that $M^{\vee}=\oplus_{i\in I} U_i$. We deduce that $M^{\vee}$ is a \HC bimodule.

Recall how irreducible Harish-Chandra bimodules decompose as an $\mf{sl}_2$-module. Denote by $V_i$ the irreducible module of highest weight $i$.

\begin{enumerate}
\item
$\mc{A}=\oplus_{i=0}^{\infty} V_{2i}$
\item
In the case $\lambda\in \ZN\setminus\{-1\}$ there are two irreducible bimodules, $\End(W)$ and $I$. We have $\End(W)=\oplus_{i=0}^{\lambda} V_{2i}$, $I=\oplus_{i=\lambda+1}^{\infty} V_{2i}$.
\item
In the case $\lambda\in \ZN+\frac{1}{2}$ we can assume that $\lambda\geq -\frac{1}{2}$. We have two irreducible bimodules $\mc{A}$ and $M$. Then $M=\oplus_{i=1}^{\infty} U_{2i+2\lambda}$.
\end{enumerate}
If $M$ is an irreducible Harish-Chandra bimodule then it is easy to deduce from the $\mf{sl}_2$-decomposition that $\ovl{M}\cong M\cong M^{\vee}$. Therefore
\begin{prop}
\label{StatUniqueHermitianForm2}
Suppose that $n=2$, $M$ is an irreducible Harish-Chandra bimodule with $\lambda\in\RN$. Then there is a unique invariant Hermitian form on $M$.
\end{prop}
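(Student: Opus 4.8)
The plan is to reduce the statement to an application of Schur's lemma via the Harish-Chandra dual $M^{\vee}$ introduced just above. First I would make precise the correspondence already indicated there: an invariant sesquilinear form $(\cdot,\cdot)$ on $M$ (linear in the first argument, antilinear in the second) is the same datum as an $\mc{A}$-bimodule homomorphism $\Phi\colon \ovl M\to M^{\vee}$, namely $\Phi(u)=(\cdot,u)$; the image indeed lands in $M^{\vee}\subset M^{*}$ precisely because the Harish-Chandra condition forces $\ad e,\ad f$ to act locally nilpotently and $\ad h$ locally finitely on $(\cdot,u)$. Conversely every such $\Phi$ produces an invariant sesquilinear form. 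Under this dictionary the Hermitian forms correspond to the homomorphisms fixed by the conjugate-linear involution $\Phi\mapsto\Phi^{\dagger}$ dual to $(u,v)\mapsto\ovl{(v,u)}$. So it suffices to prove that $\Hom_{\mc{A}\text{-bimod}}(\ovl M, M^{\vee})$ is one-dimensional over $\CN$, and then to handle the reality constraint.

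For the first point, recall from the $\mf{sl}_2$-decompositions listed above that $\ovl M\cong M\cong M^{\vee}$ and that all three are irreducible Harish-Chandra bimodules. Hence any nonzero $\Phi\colon\ovl M\to M^{\vee}$ has kernel $0$ and image $M^{\vee}$, i.e.\ is an isomorphism; so $\Hom_{\mc{A}\text{-bimod}}(\ovl M, M^{\vee})$ is nonzero (the two modules are isomorphic) and, after fixing one isomorphism $\theta\colon\ovl M\xrightarrow{\sim}M^{\vee}$, is identified with $\End_{\mc{A}\text{-bimod}}(\ovl M)$ via $\Phi\mapsto\theta^{-1}\circ\Phi$. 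It remains to see $\End_{\mc{A}\text{-bimod}}(\ovl M)=\CN$. One way is Dixmier's form of Schur's lemma: $\ovl M$ has countable $\CN$-dimension (a countable sum of finite-dimensional $\mf{sl}_2$-modules) and $\CN$ is uncountable, so the endomorphism division ring is $\CN$. Alternatively, and more in the spirit of this paper: a bimodule endomorphism commutes with $\ad\mf{sl}_2$, hence preserves the $\mf{sl}_2$-isotypic decomposition of $\ovl M$, which is multiplicity free (from the lists of $V_{2i}$, $U_{2i+2\lambda}$ recalled above); by $\mf{sl}_2$-Schur it acts by a scalar on each isotypic piece, and commuting with left multiplication by $e$, which links consecutive pieces nontrivially since $\ovl M$ is an irreducible bimodule, forces all the scalars to coincide.

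Finally, the space of invariant sesquilinear forms is one complex dimension, and $\Phi\mapsto\Phi^{\dagger}$ is a conjugate-linear involution of it; its fixed-point set, the invariant Hermitian forms, is therefore a \emph{real} line. It is nonzero: starting from any nonzero invariant sesquilinear form $\Phi$, at least one of $\Phi+\Phi^{\dagger}$ and $i(\Phi-\Phi^{\dagger})$ is a nonzero fixed vector. Hence $M$ carries an invariant Hermitian form, unique up to a nonzero real scalar, which is what the statement asserts.

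The main obstacle I expect is the bookkeeping in the very first step: pinning down the precise invariance conditions defining ``invariant sesquilinear form'' (the ones recorded earlier as $(eu,v)=-(u,vf)$, $(hu,v)=-(u,vh)$, and so on), checking that they translate exactly into $\Phi$ being an $\mc{A}$-bimodule map $\ovl M\to M^{*}$ with image contained in $M^{\vee}$, and verifying that $\Phi\mapsto\Phi^{\dagger}$ is well defined on this space. Once that dictionary is in place, the irreducibility of $M^{\vee}$ (already established from the $\mf{sl}_2$-decomposition) together with Schur's lemma finishes the argument immediately.
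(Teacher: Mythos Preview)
Your proposal is correct and follows essentially the same route as the paper: the paper's proof consists precisely of the sentence ``If $M$ is an irreducible Harish-Chandra bimodule then it is easy to deduce from the $\mf{sl}_2$-decomposition that $\ovl{M}\cong M\cong M^{\vee}$. Therefore\ldots'', leaving the Schur argument and the passage from the complex line of sesquilinear forms to the real line of Hermitian forms implicit. You have spelled out exactly those implicit steps (including the countable-dimension Dixmier version of Schur and the multiplicity-free alternative), so your write-up is a more detailed rendering of the same proof rather than a different one.
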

\begin{rem*} If $\lambda\in \ZN\setminus\{-1\}$ then $\mc{A}$ has a finite-dimensional representation $V$ and we get a short exact sequence \[0\to I=\operatorname{Ann}(V)\to \mc{A}\to \End(V)\to 0.\] Applying ${\cdot}^{\vee}$ we get a short exact sequence \[0\to\End(V)\to \mc{A}^{\vee}\to I\to 0.\]
Since the inclusion $I\subset \mc{A}$ does not split maps from $\mc{A}$ to $\mc{A}^{\vee}$ factor through $\End(V)$. It follows that every invariant hermitian form on $\mc{A}$ is zero on $I$. So in this case we get a classification of invariant hermitian forms on $\mc{A}$ without doing any computations.
\end{rem*}

\subsection{Index}
Suppose that $f$ is a polynomial such that $0\notin f(i\RN)$. Then we define index of $f$ with respect to zero as $\Ind f=\frac{-1}{\pi}\int\limits_{-i\infty}^{i\infty}\Im(\frac{f'}{f})dx$. We have $\Ind {fg}=\Ind f+\Ind g$, $\Ind (x-a)=-\sign\Re a$. Index has a geometric interpretation. Consider a continuous choice of $\arg f(x)|_{i\RN}$. There exist $\lim_{x\to\pm i\infty}\arg f(x)$ and $\pi\Ind {f(x)}=\lim_{x\to i\infty}\arg f(x)-\lim_{x\to -i\infty}\arg f(x)$.

Recall that $\rho_{< a}(F(x))$ is the number of roots with multiplicities of $F(x)$ with real part less than $a$, similarly for other inequality signs.
\begin{lem}
\label{LemWindingNumber}
Suppose that $\Re F(x)\geq 0$ when $\Re x=a$. Denote by $k$ the number of roots of $F(x)$ that have real part equal to $a$ and odd multiplicity. Then 
\begin{enumerate}
\item
\[\rho_{>a}(F(x))\leq \rho_{< a}(F(x))+k+1\]
\[\rho_{<a}(F(x))\leq \rho_{> a}(F(x))+k+1\]
In particular,
\[\rho_{>a}(F(x))\leq \rho_{\leq a}(F(x))+1\]
\[\rho_{<a}(F(x))\leq \rho_{\geq a}(F(x))+1\]
\item
Suppose that $\rho_{>a}(F(x))= \rho_{< a}(F(x))+k+1$. Denote the degree of $F$ by $2d-1$. Then the leading coefficient of $F(x)$  has sign $(-1)^d$.

Suppose that $\rho_{<a}(F(x))=\rho_{> a}(F(x))+k+1$. Then the leading coefficient of $F(x)$  has sign $(-1)^{d-1}$.
\end{enumerate}
\end{lem}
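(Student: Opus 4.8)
The plan is to derive everything from the winding-number description of the index $\Ind$ recalled just above the statement. After the substitution $x\mapsto x+a$ (which shifts every root by $-a$ and changes neither $\deg F$, nor the leading coefficient, nor $k$) I may assume $a=0$, so $\Re F(x)\ge 0$ for $x\in i\RN$. First I would separate off the roots of $F$ on the imaginary axis, writing $F=F_0F_1$ with $F_0(x)=\prod_j(x-ic_j)^{m_j}$ collecting exactly the roots $ic_j$ with $\Re(ic_j)=0$ (so that $k$ of the $m_j$ are odd), and with $F_1$ having no root on $i\RN$. Then $\rho_{>0}(F)=\rho_{>0}(F_1)$ and $\rho_{<0}(F)=\rho_{<0}(F_1)$, while $0\notin F_1(i\RN)$ makes $\Ind F_1$ legitimate, and additivity of $\Ind$ together with $\Ind(x-\alpha)=-\sign\Re\alpha$ gives $\Ind F_1=\rho_{<0}(F_1)-\rho_{>0}(F_1)$. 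Thus part (1) is precisely the bound $|\Ind F_1|\le k+1$, i.e.\ $|\Delta\arg F_1(it)|\le(k+1)\pi$ for the continuous branch of the argument along $i\RN$.

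The crux is a half-plane confinement of the curve $t\mapsto F_1(it)$. Set $m=\deg F_0$ and $q(t)=\prod_j(t-c_j)^{m_j}\in\RN[t]$, so $F_0(it)=i^mq(t)$. For each $t$ with $q(t)\ne 0$ the hypothesis $\Re F(it)\ge 0$ reads $q(t)\,\Re\bigl(F_1(it)\,i^m\bigr)\ge 0$ (using that $q(t)$ is real); that is, the continuous nowhere-vanishing curve $t\mapsto F_1(it)$ lies in one of the two closed half-planes bounded by the \emph{fixed} line $\Lambda=i^{m+1}\RN$, and the admissible half-plane switches to the opposite one exactly when $q$ changes sign, i.e.\ at the $k$ points $c_j$ with $m_j$ odd (at the remaining $c_j$ the constraint is vacuous, and the curve stays in the closed half-plane by continuity). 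A continuous nowhere-vanishing planar curve contained in a fixed closed half-plane through the origin has total argument variation at most $\pi$; cutting $i\RN$ at the $k$ switching points into $k+1$ such pieces and adding up yields $|\Delta\arg F_1(it)|\le(k+1)\pi$, which is part (1). The stated consequences for $\rho_{\le a},\rho_{\ge a}$ follow from $\rho_{\le a}(F)\ge\rho_{<a}(F)+k$ (and symmetrically).

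For part (2) I would study the equality case. A parity count gives the oddness of the degree for free: $\deg F=\rho_{>a}(F)+\rho_{<a}(F)+m$ with $m\equiv k\pmod 2$, and in the equality case $\rho_{>a}(F)+\rho_{<a}(F)=2\min(\rho_{>a},\rho_{<a})+(k+1)$, so $\deg F\equiv 1\pmod 2$; write $\deg F=2d-1$. Equality $|\Delta\arg F_1(it)|=(k+1)\pi$ now forces each of the $k+1$ pieces to sweep a full half-turn and the $k$ switches to be coherent, so the continuous lift of $\arg F_1(it)$ is monotone and runs from the extreme endpoint of the half-plane window at $t=-i\infty$ to the opposite extreme endpoint of the window at $t=+i\infty$. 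These two windows are the half-planes bounded by $\Lambda$ picked out by $\sign q(\mp\infty)=(-1)^m$ and by its $k$-fold flip; computing their endpoints and equating the resulting values, modulo $2\pi$, with $\lim_{t\to\pm\infty}\arg\bigl(\mathrm{lc}(F)\,i^{N}t^{N}\bigr)$ ($N=\deg F$) forces $\mathrm{lc}(F)\in\RN$ and pins down its sign, which comes out to be $(-1)^{d-1}$ in the case $\rho_{<a}(F)=\rho_{>a}(F)+k+1$ and $(-1)^{d}$ in the case $\rho_{>a}(F)=\rho_{<a}(F)+k+1$; the two cases are interchanged by $F(x)\mapsto\ovl F(-x)$, and the example $F=x+1$ (here $k=0$, $d=1$, $\rho_{<0}-\rho_{>0}=1$, leading coefficient $+1=(-1)^{d-1}$) is a convenient check of the signs.

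The step I expect to be the real work is the half-plane confinement together with its equality analysis. One must check that at each odd $c_j$ the admissible half-plane genuinely passes to the \emph{opposite} side — so that the curve $F_1(it)$ truly crosses $\Lambda$ there rather than merely grazing it — which comes down to the observation that $\Re\bigl(F_1(it)\,i^m\bigr)$ changes sign across $c_j$ (it cannot be eventually one-signed near $c_j$, for otherwise $q(t)\Re(F_1(it)i^m)$ would change sign there); and in part (2) the bookkeeping of the branch of $\arg F_1(it)$ at $\pm i\infty$, and of which endpoint of which window is attained, is where sign errors are easy. I would organize the latter by following the monotone continuous lift of $\arg F_1(it)$ and matching its two limits against $\arg\bigl(\mathrm{lc}(F)\,i^{N}\bigr)$ modulo $2\pi$, using $\arg\Lambda=(m+1)\pi/2\bmod\pi$ at each end.
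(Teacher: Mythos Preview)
Your proof is correct and follows essentially the same approach as the paper: both factor $F=QG$ with $Q$ collecting the roots on the line $\Re x=a$ (normalized so that $Q$ is real there), use that the sign constraint on $\Re G$ (your half-plane confinement of $F_1(it)$ relative to the fixed line $\Lambda=i^{m+1}\RN$) flips at most $k$ times to bound $|\Ind G|\le k+1$ via the argument interpretation of the index, and for part~(2) read off the sign of the leading coefficient from the asymptotic directions of the curve at $\pm i\infty$. Your half-plane confinement language is simply a more geometric rendering of the paper's terse ``hence $\Re G(x)$ changes sign at most $k$ times on $\Re x=a$''.
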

\begin{proof}
\begin{enumerate}

\item
Suppose that $\rho_{<a}(F(x))=l$, $\rho_{>a}(F(x))=m$, $\rho_{=a}(F(x))=s$. Denote by $Q(x)$ the polynomial with the top degree coefficient $i^s$ such that the roots of $Q(x)$ are all the roots of $F(x)$ with real part $a$ with the same multiplicities. Denote $\frac{F(x)}{Q(x)}$ by $G(x)$. 

We see that $Q(x)$ is real on $\Re x=a$ and changes sign at most $k$ times. Hence $\Re G(x)$ changes sign at most $k$ times on $\Re x=a$. Using the geometric interpretation of index we see that \[-k-1\leq \Ind {G(x-a)}\leq k+1.\] On the other hand $\Ind {G(x-a)}=m-l$. Hence $|m-l|\leq k+1$.
\item
Suppose that $\Ind {G(x-a)}=k+1$. This corresponds to $l=k+m+1$. Using the geometric interpretation of index we see that \[\lim_{x\to a-i\infty} \frac{G(x)}{|G(x)|}=-i,\]\[\lim_{x\to a+i\infty} \frac{G(x)}{|G(x)|}=i^{2s+1}.\] On the other hand we have \[\lim_{x\to a-i\infty}\frac{Q(x)}{|Q(x)|}=1.\] Hence \[\lim_{x\to a-i\infty}\frac{F(x)}{|F(x)|}=-i.\] If $F(x)$ starts with $ax^{2d-1}$ then \[\lim_{x\to a-i\infty}\frac{F(x)}{|F(x)|}=\frac{a}{|a|}(-i)^{2d-1}=(-i)\frac{a}{|a|}(-1)^{d-1}.\] Hence $\frac{a}{|a|}(-1)^{d-1}=1$, so $a$ is real and has sign $(-1)^{d-1}$. The other case is done similarly.
\end{enumerate}
\end{proof}

%\begin{rem}
%\label{RemBetterBoundNonSimpleImaginaryRoot}
%It follows from the proof that we can change $\rho_{\leq a}$ to $\rho_{<a}$ plus number of times $Q(x)$ changes sign on $i\RN$. If $Q(x)$ contains repeated roots this gives a better bound.
%\end{rem}

\subsection{Good approximations}

 Denote by $M$ the monoid of nonzero polynomials that are nonnegative on the line $\Re x=\frac12$ with respect to multiplication. A polynomial $F(x)\in M$ has even degree $2d$ and real leading coefficient of sign $(-1)^d$.

Let $a<\frac{\pi}{2}$. We say that a polynomial $F(x)\in M$ has $a$-{\it bounded} argument if $\arg F|_{i\RN}\subset (-a,a)$.  We say that a polynomial $F(x)\in M$  with $a$-bounded argument has $\eps$-small argument if $\arg F|_{i(-\frac{1}{\eps},\frac{1}{\eps})}\subset (-\eps,\eps)$. For $P(x)\in M$ we say that $\frac{1}{P(x)}$ has a good approximation if there exists $a<\frac{\pi}{2}$ such that for any $\eps>0$ there exists $F(x)$ such that $P(x)F(x)$ has $a$-bounded $\eps$-small argument. 

The motivation for this definition is as follows:
\begin{prop}
\label{PropFromGoodApproxToNonUnit}
Suppose that a deformation $\mc{A}$ of $\CN[x,y]^{C_n}$ has parameter $P(x)$ such that $P(x)$ is positive on $\Re x=0$ and $\frac{1}{P(2x-1)}$ has a good approximation. Then $\mc{A}$ is not unitarizable.
\end{prop}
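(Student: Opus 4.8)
The plan is to deduce from the good approximation hypothesis that the cone $C$ attached to $\mc{A}$ contains the origin; then Corollary~\ref{CorConnectionCandUnitary}(2) at once gives that $\mc{A}$ is not unitarizable. By Proposition~\ref{StatWhenCcontainsZero} it is enough to exhibit one nonzero polynomial $G(x)\in\CN[x]$ with $\Re G(x)\geq 0$ on the line $\Re x=0$ and $\Re\bigl(G(x-1)P(x-1)\bigr)\geq 0$ on the line $\Re x=0$. So the whole proof reduces to writing down such a $G$.

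To build $G$, I would apply the good approximation of $\tfrac{1}{P(2x-1)}$ with an arbitrary fixed $\eps$ (only the $a$-boundedness will actually be used, not the $\eps$-smallness): this gives a constant $a<\tfrac{\pi}{2}$ and a polynomial $F(x)$ such that $K(x):=P(2x-1)F(x)$ belongs to $M$ and has $\arg K|_{i\RN}\subset(-a,a)$. Since $P$ is positive on $\Re x=0$, the polynomial $P(2x-1)$ is strictly positive on $\Re x=\tfrac12$, so $F=K/P(2x-1)$ is nonnegative on $\Re x=\tfrac12$ as well. Now put
\[
G(x):=F\!\left(\frac{x+1}{2}\right),
\]
a nonzero polynomial. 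If $\Re x=0$ then $\tfrac{x+1}{2}$ lies on the line $\Re z=\tfrac12$, so $G(x)$ is a nonnegative real number and the first condition holds. For the second condition I would use the polynomial identity
\[
G(x-1)P(x-1)=F\!\left(\tfrac{x}{2}\right)P(x-1)=P\!\left(2\cdot\tfrac{x}{2}-1\right)F\!\left(\tfrac{x}{2}\right)=K\!\left(\tfrac{x}{2}\right),
\]
together with the observation that for $\Re x=0$ the point $\tfrac{x}{2}$ lies on $i\RN$, where $K$ either vanishes or has argument in $(-a,a)\subset(-\tfrac{\pi}{2},\tfrac{\pi}{2})$; in both cases $\Re K(\tfrac{x}{2})\geq 0$. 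Hence $0\in C$ by Proposition~\ref{StatWhenCcontainsZero}, and $\mc{A}$ is not unitarizable by Corollary~\ref{CorConnectionCandUnitary}(2).

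The only point needing care is the bookkeeping of the affine substitutions: the map $z\mapsto\tfrac{x+1}{2}$ is precisely the one that both sends the line $\Re x=0$ to the line $\Re z=\tfrac12$ (which governs membership in $M$, giving the first condition) and converts $P(2x-1)$ into $P(x-1)$ (so that the second condition becomes the $a$-boundedness of $K$ on $i\RN$). Consequently this proposition carries essentially no difficulty of its own; the real work is isolated in the appendix, where the existence of good approximations (Corollary~\ref{CorGoodApprox}) must actually be established.
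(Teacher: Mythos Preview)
Your proof is correct and follows essentially the same route as the paper: you pick $F\in M$ from the good approximation, set $G(x)=F\bigl(\tfrac{x+1}{2}\bigr)$, and verify the two positivity conditions of Proposition~\ref{StatWhenCcontainsZero} via the substitutions $\tfrac{x+1}{2}\mapsto$ line $\Re z=\tfrac12$ and $\tfrac{x}{2}\mapsto i\RN$. Your observation that only the $a$-boundedness (not the $\eps$-smallness) is used is also exactly right.
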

\begin{proof}
Take $F(x)\in M$ such that $P(2x-1)F(x)$ has $a$-bounded argument for some $0<a<\frac{\pi}{2}$. It follows that $\Re P(2x-1)F(x)\geq 0$ when $\Re x=0$. By definition $\Re F(x)=F(x)\geq 0$ when $\Re x=\frac{1}{2}$. It follows that $\Re F(\frac{x+1}{2})\geq 0$ and $\Re P(x-1)F(\frac{x}{2})\geq 0$ when $\Re x=0$. Hence by Proposition~\ref{StatWhenCcontainsZero} $C$ contains zero in this case. In particular $\mc{A}$ is not unitarizable.
\end{proof} 

\begin{prop}
\label{StatGoodApproximationOfProduct}
Suppose that for $P_1,\ldots,P_k\in M$ $\frac{1}{P_1(x)}$, $\frac{1}{P_2(x)}$,\ldots, $\frac{1}{P_k(x)}$ have a good approximation. Then $\frac{1}{P_1(x)\ldots P_k(x)}$ has a good approximation.
\end{prop}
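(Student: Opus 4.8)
The plan is to reduce to two factors, multiply the two given approximations, and then confront the one real difficulty: on $i\RN$ the argument of a product is the sum of the arguments, so the half-angles of the sectors add up, and I must first be able to make each of them small.

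\emph{Multiplying two approximations.} Suppose $\frac{1}{P_1}$ and $\frac{1}{P_2}$ have good approximations with constants $a_1,a_2<\frac{\pi}{2}$. Given $\eps>0$, pick $F_1,F_2\in M$ with $P_iF_i$ having $a_i$-bounded and $\frac{\eps}{2}$-small argument. Since $M$ is a monoid, $P_1P_2F_1F_2=(P_1F_1)(P_2F_2)\in M$, and on $i\RN$ its argument is the sum $\arg(P_1F_1)+\arg(P_2F_2)$, hence lies in $(-(a_1+a_2),a_1+a_2)$ everywhere and in $(-\eps,\eps)$ on $i(-\frac{2}{\eps},\frac{2}{\eps})\supseteq i(-\frac{1}{\eps},\frac{1}{\eps})$. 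Thus $P_1P_2F_1F_2$ has $(a_1+a_2)$-bounded and $\eps$-small argument. If $a_1+a_2<\frac{\pi}{2}$ we are done; and for general $k$, once the constants can be taken $<\frac{\pi}{2k}$, the same computation with $F=F_1\cdots F_k$ and $\frac{\eps}{k}$-small factors gives the fixed constant $a^\ast=\sum_i a_i<\frac{\pi}{2}$, independent of $\eps$, which is exactly what a good approximation of $\frac{1}{P_1\cdots P_k}$ requires.

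\emph{Making the constant small.} So the heart of the matter is the lemma: if $\frac{1}{P}$ has a good approximation, then it has one with constant $a$ for any prescribed $a>0$. Fix $a,\eps>0$ with $\eps<a$ and $\delta<\frac{\eps}{2}$, and take $F_0\in M$ with $g:=PF_0$ having $a_0$-bounded and $\delta$-small argument ($a_0<\frac{\pi}{2}$ fixed). Write $g(it)=|g(it)|e^{i\theta(t)}$: then $\theta$ is continuous, $|\theta|<a_0$ everywhere, $|\theta|<\delta$ on $[-\frac{1}{\delta},\frac{1}{\delta}]$, and $\theta(\pm\infty)=0$ because $g\in M$ has leading term positive along $i\RN$; moreover $\Ind g=0$, since an $a_0$-bounded argument has total variation $<\pi$. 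Hence $\{t:|\theta(t)|\ge\frac{\eps}{2}\}$ lies in a compact set $K\subset\{|t|\ge\frac{1}{\delta}\}$. I would then produce $F_1\in M$, in the general form $F_1=c\,h(x)\overline{h}(1-x)$ with $c>0$, whose argument $\arg F_1(it)=\arg h(it)-\arg h(1+it)$ approximates $-\theta(t)$ within $\frac{\eps}{2}$ on $K$ and has absolute value $<\frac{\eps}{2}$ off $K$: such $F_1$ exists because moving the roots of $h$ near the strip $0\le\Re x\le1$ creates localized bumps in this phase function, and the resulting family is dense (uniformly on compacta, with the required decay at $\pm i\infty$) among admissible phases — the only topological constraint, the winding number, being satisfied since we may take $\Ind F_1=0=\Ind g$. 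Then $P(F_0F_1)=gF_1\in M$ has argument $\theta+\arg F_1$, which is $<\eps<a$ on all of $i\RN$ and $<\delta+\frac{\eps}{2}<\eps$ on $i(-\frac{1}{\eps},\frac{1}{\eps})$, i.e. a good approximation with constant $a$.

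\emph{The obstacle.} Everything except the density statement for the auxiliary $F_1$ is bookkeeping with additivity of $\arg$ on $i\RN$ and the monoid structure of $M$. The one genuine point is realizing a prescribed small, winding-zero phase (exponentially small outside a compact set) as $\arg h(it)-\arg h(1+it)$ by a suitable placement of the roots of $h$; I expect this, via an index/winding analysis in the spirit of Lemma~\ref{LemWindingNumber}, to be the main technical step. (If instead the explicit constructions behind Corollary~\ref{CorGoodApprox} already furnish good approximations with arbitrarily small constant, the second step is unnecessary and one simply multiplies.)
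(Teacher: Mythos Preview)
Your reduction to $k=2$ and the observation that naive multiplication gives an $(a_1+a_2)$-bound are correct, and you rightly identify that the crux is keeping this sum below $\frac{\pi}{2}$. But your proposed fix --- the lemma that any good approximation can be upgraded to one with arbitrarily small constant --- is not proved. The density claim for phases of the form $\arg h(it)-\arg h(1+it)$ is genuinely nontrivial: you are asking for a Runge-type statement realizing an arbitrary compactly supported, winding-zero phase uniformly on all of $\RN$ by such differences, and you yourself flag it as ``the main technical step'' that you merely ``expect'' to hold. That is the gap.

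The paper sidesteps this entirely with a much simpler device: it \emph{staggers} the two choices rather than shrinking both constants. Set $a=\max(a_1,a_2)$ and fix $\eps_0$ with $a+\eps_0<\frac{\pi}{2}$. Given $\eps<\eps_0$, first pick $F_1$ so that $P_1F_1$ has $a$-bounded $\frac{\eps}{2}$-small argument. Since $P_1F_1\in M$, its argument on $i\RN$ tends to $0$ at $\pm i\infty$, so there is $\eps_1<\frac{\eps}{2}$ with $|\arg(P_1F_1)(it)|<\eps$ once $|t|\ge\frac{1}{\eps_1}$. \emph{Only then} pick $F_2$ so that $P_2F_2$ has $a$-bounded $\eps_1$-small argument. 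The regions where either factor can have argument near $a$ are now disjoint: $P_1F_1$ is small beyond $\frac{1}{\eps_1}$, while $P_2F_2$ is small inside $\frac{1}{\eps_1}$. A three-case split ($|t|<\frac{1}{\eps}$; $\frac{1}{\eps}\le|t|<\frac{1}{\eps_1}$; $|t|\ge\frac{1}{\eps_1}$) shows $P_1P_2F_1F_2$ has $(a+\eps)$-bounded $\eps$-small argument, so the fixed constant $a+\eps_0<\frac{\pi}{2}$ works for every $\eps$. No phase-realization lemma is needed; the only analytic input is the trivial fact that the argument of a polynomial in $M$ vanishes at $\pm i\infty$ on $i\RN$.
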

\begin{proof}

It is enough to prove the statement for $k=2$.

Let $a_i$ be the number we get from definition of good approximation of $\frac{1}{P_i(x)}$ for $i=1,2$, $a=\max(a_1,a_2)$. It follows that for any $\eps_1,\eps_2>0$ there exist $F_1,F_2\in M$ such that $F_i(x)P_i(x)$ has $a$-bounded $\eps_i$-small argument.

Let $\eps_0>0$ be number such that $a+\eps_0<\frac{\pi}{2}$. Let $\eps_0>\eps>0$. It is enough to prove that there exists $F\in M$ such that $F(x)P_1(x)P_2(x)$ has $a+\eps$-bounded $\eps$-small argument.

First choose $F_1(x)$ such that $F_1(x)P_1(x)$ has $a$-bounded $\frac{\eps}{2}$-small argument. Since $F_1(x)$ tends to $\infty$ when $x$ tends to $\pm i\infty$ there exists $\eps_1>0$, $\eps_1<\frac{\eps}{2}$ such that $\arg F_1(x)\in (-\eps,\eps)$ for $x\in i\RN$, $|x|\geq \frac{1}{\eps_1}$. Choose $F_2(x)$ such that $F_2(x)P_2(x)$ has $a$-bounded $\eps_1$-small argument. 

Let us prove that $F_1(x)F_2(x)P_1(x)P_2(x)$ has $\eps$-small $a+\eps$-bounded argument. Denote $F_1(x)F_2(x)$ by $F(x)$. There are three cases for $x\in i\RN$:
\begin{enumerate}
\item
$|x|<\frac{1}{\eps}$. In this case $\arg F_1(x)P_1(x)\in (-\frac{\eps}{2},\frac \eps 2)$, $\arg F_2(x)P_2(x)\in (-\eps_1,\eps_1)$, hence $\arg F(x)\in (-\eps,\eps)$. This proves that $F(x)P(x)$ has $\eps$-small argument.
\item
$|x|<\frac{1}{\eps_1}$. In this case $\arg F_1(x)P_1(x)\in (-a,a)$, $\arg F_2(x)P_2(x)\in (-\eps_1,\eps_1)$, hence $\arg F(x)P(x)\in (-a-\eps_1,a+\eps_1)$, so it is $a+\eps$-bounded.
\item
$|x|\geq \frac{1}{\eps_1}$. In this case $\arg F_1(x)P_1(x)\in (-\eps,\eps)$, $\arg F_2(x)P_2(x)\in (-a,a)$, hence $\arg F(x)P(x)\in (-a-\eps,a+\eps)$, so it is $a+\eps$-bounded.
\end{enumerate}

Hence $F(x)P(x)$ has $\eps$-small $a+\eps$-bounded argument.

\end{proof}
Now let us prove the following proposition.

\begin{prop}
Polynomial $P(x)=-(x-a)(x-1+\ovl{a})$ has a good approximation when $\Re a<0$.
\end{prop}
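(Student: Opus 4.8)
First one checks that $P\in M$: for $x=\tfrac12+it$ on the line $\Re x=\tfrac12$ one has $x-1+\bar a=-\overline{x-a}$, so $P(x)=-(x-a)(x-1+\bar a)=|x-a|^2\ge 0$, and this is even strictly positive because $\Re a\neq\tfrac12$. The vertical translation $x\mapsto x+i\Im a$ preserves $M$ and, up to adjusting $\varepsilon$, preserves the good--approximation property, so we may assume $a\in\RN$, $a<0$. Set $c_1=-a>0$ and $c_2=1-a=c_1+1>1$. A short computation gives
\[
P(is)=(c_1+is)(c_2-is)=(c_1c_2+s^2)+is ,
\]
so $\Re P(is)>0$ for all real $s$ and
\[
\arg P(is)=\arctan\!\Big(\frac{s}{c_1c_2+s^2}\Big)=\arg(c_1+is)+\arg(c_2-is),
\]
a continuous odd function of $s$ with values in $(-a_0,a_0)$, where $a_0:=\arctan\tfrac1{2\sqrt{c_1c_2}}<\tfrac\pi2$, decaying like $1/s$ as $s\to\pm\infty$.

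\textbf{What $F$ must look like, and the construction.} Suppose $PF$ has $a$--bounded argument for some $a<\tfrac\pi2$. Then $\Re(PF)>0$ on $i\RN$, so by the geometric interpretation of the index $\Ind(PF)\in(-1,1)$, i.e.\ $\Ind(PF)=0$; since the roots $a$, $1-\bar a$ of $P$ lie on opposite sides of the strip $0<\Re x<1$, $\Ind P=(+1)+(-1)=0$, hence $\Ind F=0$. For $F\in M$ this forces $F$ to be a positive multiple of a product of reflection pairs $g_b(x):=-(x-b)(x-1+\bar b)$ with $\Re b\notin[0,1]$ --- an on--line square factor, or a reflection pair with both roots in the strip, would each contribute $-2$ to $\Ind F$; using $b\leftrightarrow 1-\bar b$ we may take $\Re b<0$. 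Each $g_b$ lies in $M$, and with $r:=-\Re b>0$, $m:=\Im b$ one computes, just as for $P$,
\[
g_b(is)=\big(r(r+1)+(s-m)^2\big)+i(s-m),\qquad
\arg g_b(is)=\arctan\!\Big(\frac{s-m}{r(r+1)+(s-m)^2}\Big),
\]
a single ``bump'' centred at $m$. Hence for $F=\prod_\ell g_{b_\ell}$,
\[
\arg(PF)(is)=\arg P(is)+\sum_\ell\arg g_{b_\ell}(is)
\]
is a superposition of bumps, and the problem becomes: choose the centres $m_\ell$ and widths $r_\ell$ so that this sum lies in $(-\varepsilon,\varepsilon)$ on $i(-1/\varepsilon,1/\varepsilon)$ while staying in a \emph{fixed} interval $(-a,a)$, $a<\tfrac\pi2$, on all of $i\RN$. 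The plan is to take every $r_\ell=c_1$, so that each bump has exactly the profile $\psi(t)=\arctan\frac{t}{c_1c_2+t^2}$ of $\arg P$ itself (which is the ``$\ell=0$'' term), and to place the centres symmetrically, $\{m_\ell\}=\{0,\pm m_1,\pm m_2,\dots\}$, out to some $M=M(\varepsilon)\gg1/\varepsilon$: with small spacing where $|m|\lesssim1/\varepsilon$, so that the comb $\sum_\ell\psi(s-m_\ell)$ stays uniformly close there to its period--average (which is $\propto\int\psi=0$ by oddness of $\psi$); and with spacing increasing towards $\pm M$, so the bumps do not reinforce. Using $\psi(t)\sim1/t$ one then shows that, for a suitable choice, the superposition is $<\varepsilon$ on the window (it is controlled by the spacing in the dense region plus a tail of the outermost bumps, which vanishes as $M\to\infty$) while its supremum on all of $i\RN$ stays below a fixed $a<\tfrac\pi2$.

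\textbf{The main obstacle.} The crux is reconciling these two requirements. Flattening the comb across a window of length $2/\varepsilon$ forces the spacing towards $0$, but a comb of the slowly decaying profile $\psi$ with spacing $\Delta$ reinforces near its ends to size $\sim\Delta^{-1}\log(\text{length})$, which would violate the fixed bound $a<\tfrac\pi2$; so a naive truncated equally--spaced comb fails. Resolving this is exactly the ``good approximation'' content: one must taper the comb over a long transition region --- spreading the centres out and, if needed, enlarging the $r_\ell$ there --- slowly enough that no pile--up occurs, yet fast enough that the tapered tail still contributes only $o(1)$ back on the window. Making the choices of $M(\varepsilon)$, of the spacing profile, and of the $r_\ell$ explicit, and proving the two estimates (with the elementary arctan inequalities and the winding Lemma~\ref{LemWindingNumber} of the Appendix as the main tools), is the technical heart of the argument; once this single quadratic is handled, Proposition~\ref{StatGoodApproximationOfProduct} disposes of products.
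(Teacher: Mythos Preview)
Your write-up is a plan, not a proof. You set the problem up correctly, your index computation is fine, and your reduction of the shape of $F$ to a product of reflection pairs $g_b$ is right. But at the end you say, in effect, ``choose the centres, the spacing profile, and the widths so that the two estimates hold; this is the technical heart of the argument'' --- and then stop. That technical heart is exactly what is missing. You even identify the real obstruction: with the profile $\psi(t)\sim 1/t$, a truncated equally-spaced comb of length $L$ and spacing $\Delta$ piles up to size $\sim\Delta^{-1}\log(L/\Delta)$ near its ends, which is incompatible with a fixed bound $a<\pi/2$. Your proposed fix (taper the spacing and inflate the $r_\ell$ in a transition region) is plausible, but you give no scheme and no estimate showing that the tapered tail contributes $o(1)$ on the window while the transition region stays below a fixed $a$. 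Until that is written down, nothing has been proved.

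The paper sidesteps the comb problem entirely by a different construction. After the same reduction to $a\in\RN$, $b=1-a$, it considers the entire function
\[
R_0(x)=(e^{n(x-a)}-1)(e^{n(b-x)}-1),
\]
which vanishes at $a$ and $b$ (so $R_0/P$ is entire), is strictly positive on $\Re x=\tfrac12$ (it is a product of complex conjugates there), and on $i\RN$ has argument uniformly $<\varepsilon$ once $n$ is large, simply because the dominant term $e^{n(b-a)}$ is real, positive, and exponentially larger than the other three terms. So the entire function $R_0$ already achieves ``$\varepsilon$-small $\varepsilon$-bounded argument'' for free; the only remaining work is to replace each exponential $e^{n(x-a)}$ by a product $\prod_j(1+(x-a)/l_j)^{l_j}$ and control how far the polynomial $R$ drifts from $R_0$. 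That drift is handled by a telescoping argument in the spirit of Proposition~\ref{StatGoodApproximationOfProduct}: each factor $E_l^+E_l^-$ has argument in $(-\tfrac12,\tfrac12)$ on all of $i\RN$ and can be made $\varepsilon_1$-small on any fixed window by taking $l$ large, so one inserts them one at a time. The point is that the ``boundedness on all of $i\RN$'' and the ``smallness on the window'' are obtained from two different mechanisms (dominance of the real term $e^{n(b-a)}$; uniform convergence on compacta), rather than from a single delicate spacing profile. If you want to salvage your bump approach you will have to produce the tapering scheme and the estimates explicitly; otherwise, the exponential construction is both shorter and avoids the obstacle you found.
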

\begin{proof}
Making linear change of coordinates $x\mapsto x+ir$ we can assume that $a\in \RN$. Denote $1-a$ by $b$. We have $P(x)=-(x-a)(x-b)$. We see that $P(x)$ is positive on the line $\Re x=\frac12$.

Note that we can define the notion of $\eps$-bounded and $\eps$-small argument for an entire function that is positive on $\Re x=\frac12$. At first we will find an entire function $F_0$ such that $F_0(x)>0$ when $\Re x=\frac{1}{2}$ and $F_0(x)P(x)$ has $\eps$-bounded, hence $\eps$-small argument. After that we will  approximate $F_0$ with polynomials.

Fix $\eps>0$. Consider $R_0(x)=(e^{n(x-a)}-1)(e^{n(b-x)}-1)$, where $n$ is a positive integer that we will specify later. We see that $R_0(a)=R_0(b)=0$, so $F_0(x)=\frac{R_0(x)}{P(x)}$ is a holomorphic function. For $\Re x=\frac{1}{2}$ $R_0(x)$ is a product of two conjugate nonzero complex numbers, so $R_0(x)>0$. Since $P_0(x)$ is positive on $\Re x=\frac{1}{2}$, $F_0(x)$ is positive on $\Re x=\frac{1}{2}$.

Suppose that $x\in i\RN$. We have $R_0(x)=e^{n(b-a)}-e^{n(x-a)}-e^{n(b-x)}+1$. We see that \[|\Im R_0(x)|\leq |e^{n(x-a)}+e^{n(b-x)}|\leq e^{-na}+e^{nb},\] \[|R_0(x)|\geq e^{n(b-a)}-e^{-na}-e^{-nb}-1.\] We see that for big enough $n$ we have \[\frac{|\Im R_0(x)|}{|R_0(x)|}\leq \frac{\eps}{2},\] hence the argument of $R_0(x)$ belongs to $(-\eps,\eps)$.

The function $e^{x-a}$ is approximated uniformly on compact sets by $E^-_l(x)=(\frac{l+x-a}{l})^l$. Similarly $e^{b-x}$ is approximated by $E^+_l(x)=(\frac{l+b-x}{l})^l$. 

Let us prove that for some sequence $l_1\leq l_2\leq\ldots\leq l_n$ we can take \[R(x)=(R^+(x)-1)(R^-(x)-1)=(E^+_{l_1}(x)\cdots E^+_{l_n}(x)-1)(E^-_{l_1}\cdots E^-_{l_n}(x)-1).\] We see that $R$ is positive on $\Re x=\frac12$ and $R(a)=R(b)=0$, so that $S(x)=\frac{R(x)}{P(x)}$ is a polynomial that is positive on the line $\Re x=\frac{1}{2}$.

Sequence $(1-\frac{a}{l})^l$ tends to $e^{-a}>1$. It follows that there exists $c>1$ and $l_0$ such that for $l\geq l_0$ $(1-\frac{a}{l})^l>c$. We note that when $x\in i\RN$, $|\frac{l+x-a}{l}|\geq 1-\frac{a}{l}$. Therefore when $x\in i\RN$, $l\geq l_0$ $|E^-_l(x)|>c$. We deduce that when $l_0\leq l_1\leq\cdots\leq l_n$ we have $|R^-(x)|>c^n$. Choosing another $c>1$ if necessary we similarly prove that $|R^+(x)|>c^n$ when $x\in i\RN$. 

It follows that the argument of $R(x)=(R^+(x)-1)(R^-(x)-1)$ differs from the argument of $R^+(x)R^-(x)$ by at most $\frac{4\pi}{c^n}$. Fix $n$ such that $1+\frac{4\pi}{c^n}<\frac{\pi}{2}$. If we prove that the argument of $R^+(x)R^-(x)$ is $1$-bounded it will follow that the argument of $R(x)$ is $1+\frac{4\pi}{c^n}$-bounded.

Suppose that $K$ is a compact subset of $\CN$. When $l_1,\ldots,l_n$ tend to infinity $\max_{x\in K}|R(x)-R_0(x)|$ tends to zero. Function $R_0(x)$ has $\eps$-bounded argument. Taking $K=[-\frac{1}{2\eps}i,\frac{1}{2\eps}i]$ we deduce that for big enough $l_1,\ldots,l_n$ $R(x)$ has $2\eps$-small argument.

Arguing as in proof of Proposition~\ref{StatGoodApproximationOfProduct} it is enough to prove that for any $\eps_1>0$ we can choose sufficiently large $l$ such that $E^+_lE^-_l$ has $\eps_1$-small $\frac{1}{2}$-bounded argument. For big enough $l$ the product $E^+_lE^-_l$ approximates $e^{b-a}$ on $[-\frac{i}{\eps_1},\frac{i}{\eps_1}]$, hence it has $\eps_1$-small argument. We have $E^+_lE^-_l=(\frac{(l+x-a)(l+b-x)}{l^2})^l$. We see that \[\tan\arg(l+x-a)(l+b-x)=\frac{x(b+a)}{x^2+(l-a)(l+b)}.\] We have \[\frac{x(b+a)}{x^2+(l-a)(l+b)}\leq \frac{x\cdot 1}{2x\sqrt{(l-a)(l+b)}}\leq \frac{1}{2l}.\] We used that $a+b=1$, $a<0$, $b>0$. Hence the argument of $(l+x-a)(l+b-x)$ belongs to $(-\frac{1}{2l},\frac{1}{2l})$, so the argument of $E^+_lE^-_l$ belongs to $(-\frac{1}{2},\frac{1}{2})$. The statement follows.
\end{proof}
Using Proposition~\ref{StatGoodApproximationOfProduct} we deduce the following
\begin{cor}
\label{CorGoodApprox}
Suppose that $P(x)\in \CN[x]$ is positive on $\Re x=\frac{1}{2}$ and does not have roots in the set $0\leq \Re x\leq 1$. Then $\frac1{P(x)}$ has a good approximation.
\end{cor}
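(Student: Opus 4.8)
The plan is to factor $P$ into quadratic factors of exactly the shape handled by the preceding proposition and then apply Proposition~\ref{StatGoodApproximationOfProduct}.

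First I would record a symmetry of $P$. Since $P$ is real and positive on the line $\Re x=\frac12$, the polynomial $\ovl{P}(1-x)$ (conjugate the coefficients of $P$, then substitute $1-x$) agrees with $P$ on that line, and hence $P(x)=\ovl{P}(1-x)$ as polynomials. Consequently the root multiset of $P$ is invariant, with multiplicities, under the reflection $\alpha\mapsto 1-\ovl{\alpha}$ about the line $\Re x=\frac12$. A fixed point of this reflection would satisfy $\Re\alpha=\frac12$, which is excluded by hypothesis, so the roots of $P$ split into pairs $\{\alpha,\,1-\ovl{\alpha}\}$; and since $P$ has no root with $0\le\Re x\le 1$, exactly one member of each pair has $\Re<0$ and the other has $\Re>1$. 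Writing $2d=\deg P$ and letting $\alpha_1,\dots,\alpha_d$ (with multiplicity) be the representatives with $\Re<0$, I get
\[
P(x)=c\prod_{j=1}^{d}\bigl(-(x-\alpha_j)(x-1+\ovl{\alpha_j})\bigr),\qquad\Re\alpha_j<0 .
\]
A short computation shows that at $x=\frac12+it$ one has $-(x-\alpha_j)(x-1+\ovl{\alpha_j})=\bigl|\tfrac12-\alpha_j+it\bigr|^2>0$, so each quadratic factor lies in the monoid $M$; since $P$ is also positive on that line, the constant $c$ is a positive real.

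Next I would feed this into the two available results. By the preceding proposition, $\frac1{-(x-\alpha_j)(x-1+\ovl{\alpha_j})}$ has a good approximation for every $j$, since $\Re\alpha_j<0$. Multiplying a witness polynomial $F\in M$ by the positive scalar $c$ does not change its argument on $i\RN$, so $\frac1{-c(x-\alpha_1)(x-1+\ovl{\alpha_1})}$ also has a good approximation. Applying Proposition~\ref{StatGoodApproximationOfProduct} to the factors $-c(x-\alpha_1)(x-1+\ovl{\alpha_1}),\ -(x-\alpha_2)(x-1+\ovl{\alpha_2}),\dots,-(x-\alpha_d)(x-1+\ovl{\alpha_d})$, all of which lie in $M$, shows that $\frac1{P(x)}$ has a good approximation. (If $d=0$ then $P$ is a positive constant and $F\equiv1$ works.)

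Almost all of the substance is already contained in the two preceding propositions; the only step here demanding care is the factorization — verifying that the identity $P(x)=\ovl{P}(1-x)$ forced by positivity on the line, together with the no-roots-in-the-strip hypothesis, produces precisely the quadratics $-(x-\alpha)(x-1+\ovl{\alpha})$ with $\Re\alpha<0$ for which the preceding proposition constructs good approximations, and pinning down the sign of the residual constant $c$. Everything else is routine bookkeeping.
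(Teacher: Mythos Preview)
Your argument is correct and is precisely the approach the paper intends: the paper's ``proof'' consists of the single clause ``Using Proposition~\ref{StatGoodApproximationOfProduct} we deduce the following,'' leaving the reader to supply exactly the factorization you wrote out. Your verification that positivity on $\Re x=\tfrac12$ forces $P(x)=\ovl{P}(1-x)$, the resulting pairing of roots $\alpha\leftrightarrow 1-\ovl{\alpha}$ with one side having $\Re<0$, and the absorption of the positive constant $c$ are all sound and fill in the details the paper omits.
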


\textsc{Department of Mathematics, MIT, 77 Mass. Ave, Cambridge, MA 02139}

{\it E-mail address}: \texttt{\href{mailto:klyuev@mit.edu}{klyuev@mit.edu}}
\end{document}